\documentclass[reqno,12pt]{amsart}

\usepackage{amssymb, amsfonts}
\usepackage{verbatim}
\usepackage{fullpage}

\usepackage{tikz-cd}

\usepackage{mathtools}
\usepackage{amssymb, amsfonts}
\usepackage[utf8]{inputenc}
\usepackage{amsthm}
\usepackage{amsmath}
\usepackage{amsxtra}
\usepackage{amscd}
\usepackage{amssymb}
\usepackage{mathtools}
\usepackage{xspace}
\usepackage{amsfonts}
\usepackage{todonotes}
\usepackage{enumerate}
\usepackage{multirow}
\usepackage{tikz}
\usepackage{cancel}
\usepackage{tikz-cd}
\usepackage[all]{xy}
\usepackage{amscd}
\usepackage{caption}
\usepackage{epsfig}
\usepackage{graphicx,transparent}
\usepackage{here}
\usepackage{setspace}
\usepackage{wasysym}
\usepackage{xparse}
\usepackage{manfnt}
\usepackage[hidelinks]{hyperref}
\usepackage{url}
\usepackage{mathrsfs}
\usepackage{color}   %May be necessary if you want to color links
\usepackage{cleveref}

\hypersetup{
    colorlinks=true, %set true if you want colored links
    linktoc=all,     %set to all if you want both sections and subsections linked
    linkcolor=blue,  %choose some color if you want links to stand out
}

\usepackage{tikz-cd}

\usepackage{mathtools}
\usepackage{subcaption}

\usepackage{amsmath}
\usepackage{tikz}
\usepackage{mathdots}
\usepackage{yhmath}
\usepackage{cancel}
\usepackage{color}
\usepackage{siunitx}
\usepackage{array}
\usepackage{multirow}
\usepackage{amssymb}
\usepackage{tabularx}
\usepackage{extarrows}
\usepackage{booktabs}
\usetikzlibrary{fadings}
\usetikzlibrary{patterns}
\usetikzlibrary{shadows.blur}
\usetikzlibrary{shapes}

\newtheorem{theorem}[equation]{Theorem}
\newtheorem{lemma}[equation]{Lemma}
\newtheorem{proposition}[equation]{Proposition}
\newtheorem{corollary}[equation]{Corollary}

\theoremstyle{definition}
\newtheorem{definition}[equation]{Definition}

\theoremstyle{remark}
\newtheorem{remark}[equation]{Remark}

\numberwithin{equation}{section}

\DeclareMathOperator{\res}{res}

\newcommand{\cR}{\mathcal{R}}
\newcommand{\cC}{\mathcal{C}}

\newcommand{\cM}{\mathcal{M}}
\newcommand{\cQ}{\mathcal{Q}}

\newcommand{\cH}{\mathcal{H}}

\newcommand{\PP}{\mathbb{P}}

\newcommand{\cO}{\mathcal{O}}
\newcommand{\cA}{\mathcal{A}}
\newcommand{\bP}{\mathbb{P}}
\newcommand{\bQ}{\mathbb{Q}}

\newcommand{\marg}[1]{\normalsize{{\color{red}\footnote{#1}}}{\marginpar[\vskip -.3cm {\color{red}\hfill\tiny{\thefootnote}\normalsize$\implies$}]{\vskip -.3cm{ \color{red}$\impliedby$\tiny\thefootnote}}}}

\newcommand{\Myeongjae}[1]{\marg{\color{magenta}(Myeongjae): #1}}

\newcommand{\prabhat}[1]{\textcolor{red}{[Prabhat: #1]}}

\begin{document}

\title[Cohomology of strata]{Cohomology ring of the strata of holomorphic differentials of genus three}
\author[P. Devkota]{Prabhat Devkota}
\address{Institut f\"ur Mathematik\\
Goethe-Universit\"at Frankfurt\\
Robert-Mayer-Stra{\ss}e 6-8\\
Frankfurt am Main, 60325 Germany}
\email{devkota@math.uni-frankfurt.de}
\author[M. Lee]{Myeongjae Lee}
\address{Department of Mathematics\\
Indiana University\\
831 E.\ Third Street\\
Bloomington, IN 47405\\
USA}
\email{myelee@iu.edu}
\keywords{Meromorphic differentials, Translation surfaces, Moduli of differentials}
\thanks{}

\begin{abstract}
We prove that the rational Chow ring of every connected component of the multi-scale compactifications of the genus three strata $\cH(4), \cH(2,2)$ and $\cH(3,1)$ is generated by boundary-stratum classes. Our proof shows Chow-K\"unneth generation property of boundary strata of the multi-scale compactification by reducing the boundary analysis to lower-genus generalized strata of meromorphic differentials, including residueless loci, described through plane-quartic and plane-cubic models. As a consequence, the rational cycle-class map is an isomorphism and the compactifications have no odd rational cohomology. 
\end{abstract}

\maketitle

\tableofcontents

\section{Introduction} \label{sec:intro}
Given an $n$-tuple of integers $(m_1,\ldots,m_n)$ summing to $2g-2$, the moduli space of meromorphic differentials (or {\em stratum of differentials}) $\Omega\cM_{g,n}(\mu)$ parameterizes meromorphic 1-forms $\omega$ on an $n$-pointed curve $(C;p_1,\ldots,p_n)\in \cM_{g,n}$ of genus $g$ with prescribed orders of zeros and poles at $p_i$, i.e., $\text{div}(\omega)=\sum_im_ip_i$. If all $m_i$'s are non-negative then $\Omega\cM_{g,n}(\mu)$ is naturally a subvariety of the Hodge bundle $\Omega\cM_{g,n}\rightarrow \cM_{g,n}$ -- the vector bundle over $\cM_{g,n}$ whose fiber over $(C;p_1,\ldots,p_n)$ is the vector space $H^0(C,\omega_C)$ of holomorphic 1-forms on the curve $C$. There is a natural action of $\mathbb{C}^*$ on $\Omega\cM_{g,n}(\mu)$ by rescaling the differential. The resulting quotient $\mathbb{P}\Omega\cM_{g,n}(\mu)$ will also be called the {\em moduli space of meromorphic differentials}. For notational simplicity, we will put $\cH_{g,n}(\mu)\coloneqq\mathbb{P}\Omega\cM_{g,n}(\mu)$, and often even drop $g,n$ or even $\mu$ from the notation and denote it simply by $\cH(\mu)$ or simply $\cH$, if $g,n$ and $\mu$ are self-evident from the context. Of course there is a natural forgetful morphism $\cH(\mu)\rightarrow \cM_{g,n}$ that forgets the underlying meromorphic differntial. This morphism is injective, so we will frequently identify $\cH(\mu)$ as a closed substack of $\cM_{g,n}$.

The moduli space $\cH_{g,n}(\mu)$ is a smooth Deligne-Mumford stack, but it may not be connected (though it frequently is). The seminal paper \cite{kozo1} by Kontsevich and Zorich proves that if all $m_i\ge 0$ then $\cH(\mu)$ can have up to three connected components: at most one hyperelliptic component and up to two non-hyperelliptic components. Analogous classification of the components when some $m_i$ is negative is done in \cite{boissymero}. Similarly, $\cH(\mu)$ is not usually proper. However, its na\"ive closure in $\overline\cM_{g,n}$ is not smooth. Nonetheless, a smooth (as a Deligne-Mumford stack) compactification $\overline\cH_{g,n}(\mu)\coloneqq \mathbb{P}\Xi\overline\cM_{g,n}(\mu)$, called the {\em moduli space of multi-scale differentials} (or the {\em multi-scale compactification} of $\cH_{g,n}(\mu))$, was constructed in \cite{BCGGM}. It is a smooth proper Deligne-Mumford stack with projective coarse moduli space and normal crossing boundary divisor $\partial\cH_{g,n}(\mu)\coloneqq \overline\cH_{g,n}(\mu)\setminus\cH_{g,n}(\mu)$. This compactification admits a natural proper morphism $\overline\cH_{g,n}(\mu)\rightarrow \overline\cM_{g,n}$ which is birational onto its image.

While there have been a lot of developments in the study of Chow and cohomology rings of $\cM_{g,n}$ and $\overline\cM_{g,n}$, the analogous questions for $\cH(\mu)$ and $\overline\cH(\mu)$ have been relatively poorly understood. Of course in genus 0, the situation is markedly simpler: $\cH_{0,n}(\mu)=M_{0,n}$, whereas the compactification $\overline\cH_{0,n}(\mu)$ was shown in \cite{Devkotacohomology} to have its cohomology (and Chow) ring generated by the boundary divisors, paralleling the situation for $\overline{M}_{0,n}$ in \cite{keel}. However, already for $g=1$ and $n=2$, the situation is markedly different to that for $\overline\cM_{g,n}$: the (primitive component of the) moduli space $\overline\cH(m,-m)$ is the classical modular curve $X_1(m)$ which is rational only for $m\le 10$ or $m=12$, see e.g. \cite{mazur}.

For $g\ge 2$, the situation is even more mysterious. Of course the hyperelliptic component of a holomorphic stratum (i.e. $\mu=(m_1,\ldots,m_n)$ with $m_i\ge 0$) is easier to study since its image in $\overline\cM_{g,n}$ is finitely covered by $\overline{M}_{0,n}$ (at least on the level of coarse moduli spaces). Additionally, the Chow rings of the interior of several holomorphic strata in lower genera, especially the "non-varying strata" are quite well understood \cite{chen_gorensteinsingularity}. However, even for a stratum of holomorphic differentials $\cH(\mu)$, the boundary $\partial\cH(\mu)$ necessarily parameterizes meromorphic differentials. So, extending the computation of Chow ring to the compactification $\overline\cH(\mu)$ is not necessarily straightforward. In fact, using the geometry of the boundary $\partial\cH(\mu)$, the authors of \cite{devkota_grushevsky} constructed several odd cohomology classes as well as even degree (nontautological) cohomology classes on a large class of the compactified strata.\\

\textbf{Convention:} The Chow rings and cohomology rings will be taken with $\mathbb{Q}$-coefficients, unless explicitly noted otherwise. In particular, for brevity, $H^*(X)$ will mean $H^*(X;\mathbb{Q})$ and $CH_*(X)$ will mean $CH_*(X)\otimes\mathbb{Q}$, unless explicitly noted otherwise. Additionally, all the schemes or stacks in consideration will be over the $\mathbb{C}$, and for $X$ a smooth Deligne-Mumford stack, we will use the convention $CH^k(X)\coloneqq CH_{n-k}(X)$, where $n=\text{dim}_{\mathbb{C}}(X)$.

\subsection{Main results}
Inspired by the recent developments in the computations of Chow and cohomology rings of $\overline\cM_{g,n}$ -- see e.g. \cite{cl22} and \cite{clp23} -- in this paper, we build up on similar techniques to study the cohomology ring of $\overline\cH_{g,n}(\mu)$, especially in genus 3 when $\mu$ is of strictly homolorphic type (i.e. $\mu=(m_1,\ldots,m_n)$ with $m_i>0$).
\begin{theorem}\label[theorem]{thm:main_thm}
    The strata $\overline{\cH}(\mu)$ has CKgP for $\mu=(4),(2,2)$ and $(3,1)$.
\end{theorem}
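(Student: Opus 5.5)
The plan follows the Canning--Larson strategy for $\overline\cM_{g,n}$, using the standard properties of the Chow--K\"unneth generation property (CKgP). CKgP is stable under three operations. First, if $Z\subset X$ is closed and both $Z$ and $X\setminus Z$ have CKgP, then $X$ has CKgP. Second, finite unions of locally closed strata behave the same way. Third, a space with CKgP that is the target of a proper surjection from a space with CKgP, or a quotient by a finite group, again has CKgP. Moreover, products of spaces with CKgP have CKgP.

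We stratify $\overline\cH(\mu)$ by the open stratum $\cH(\mu)$ together with the boundary strata $D_\Gamma$, indexed by level graphs $\Gamma$. By the first property, it suffices to prove CKgP for $\cH(\mu)$ and for each open boundary stratum $D_\Gamma^\circ$.

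Each open boundary stratum is, up to a finite cover (coming from prong-matchings and automorphisms) and a torus-type bundle, a product over levels of generalized strata of lower complexity. Each level contributes a (possibly disconnected) generalized stratum of meromorphic differentials, with residue conditions imposed by the global residue condition. Since finite quotients and torus bundles preserve CKgP, we reduce to proving CKgP for:
\begin{enumerate}[(i)]
\item the open strata $\cH(4)$, $\cH(2,2)$, $\cH(3,1)$ in genus three, component by component;
\item the lower-genus generalized strata appearing as levels, in genus $\le 2$ with poles, including the residueless loci cut out by the global residue condition.
\end{enumerate}

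For (i) we use canonical models. A non-hyperelliptic genus three curve is a plane quartic, and $\omega$ corresponds to a line $L$ meeting the quartic in the divisor $\mu$. Thus $\cH(4)^{odd}$ is the space of quartics with a hyperflex line, and $\cH(2,2)^{odd}$ is the space of quartics with a bitangent line. Each is, modulo $PGL_3$, an open subset of a linear subspace of quartic forms vanishing to the prescribed order along a fixed line. After fixing the line and points, this is a quotient of an open subset of affine space by a connected linear group. Such spaces have CKgP, because affine-space complements have Chow groups generated by fundamental classes and the group is linear.

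The hyperelliptic components are handled via the finite cover by genus zero data, namely configurations of points on $\PP^1$, which have CKgP by \cite{Devkotacohomology}-type arguments. The component $\cH(3,1)$ is treated similarly: the quartic is tangent to $L$ to order $3$ at one point.

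For (ii), the levels have genus $\le 2$. Here we use plane cubic models: genus one with points is a cubic with marked line conditions. Genus two is hyperelliptic, so it reduces to $\PP^1$ configurations, with the residue conditions imposing linear conditions on these models.

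The main obstacle is (ii), especially the residueless loci and the low-genus meromorphic strata with higher-order poles. These are not obviously open subsets of linear spaces. One needs an explicit model, such as differentials on cubics given by rational functions, where vanishing residues become linear or otherwise tractable conditions; the excision of degenerate loci must also stay within CKgP spaces. My first attempt would be to realize each genus-one residueless stratum as an open subset of a vector space of sections modulo an affine group. If that fails, I would stratify further by the degeneracy of the residue map.

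Finally, CKgP for $\overline\cH(\mu)$ follows by induction along the boundary stratification, using that each closure stratum is a union of open ones treated above.
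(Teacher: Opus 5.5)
Your framework matches the paper's: stratify $\overline\cH(\mu)$, pass from each open stratum to the level-wise product of lower-genus strata, use plane-quartic models for the genus-three interiors, and use $\PP^1$-configurations for the hyperelliptic components. However, two load-bearing steps fail as written.

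First, the reduction runs CKgP in the wrong direction. What is available is descent: if $X\to Y$ is proper surjective and $X$ has CKgP, so does $Y$. The open stratum sits in a correspondence $D_\Gamma^o\xleftarrow{c_\Gamma^o}U_{\Gamma,\Gamma}\xrightarrow{p_\Gamma^o}B_\Gamma^o$. The map $c_\Gamma^o$ (automorphisms of $\Gamma$ and ghost automorphisms) is harmless. But $p_\Gamma^o$ is a finite cover of the product, of degree equal to the number of prong-matching equivalence classes, and you need CKgP of its \emph{source}. This does not follow from CKgP of $B_\Gamma^o$: in general an \'etale double cover of a punctured rational curve can be a punctured elliptic curve, which fails CKgP. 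The paper handles this explicitly:
\begin{itemize}
\item every divisorial graph has a single prong-matching class except $\Gamma_2$;
\item the $3$-level degenerations create no long edges of enhancement $\ge 2$, so the count does not grow;
\item for $\mu=(4),(2,2)$ the two classes of $\Gamma_2$ land in different connected components;
\item for $\mu=(3,1)$, where $D_{\Gamma_2}^o$ is connected, a separate lemma shows the cover is $\overline\cH(1,1)\times C$ with $C$ a connected double cover of $\cH(3,1,-3,-3)\cong M_{0,4}$, rational by Riemann--Hurwitz.
\end{itemize}

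Second, part (ii) is where the content of the theorem lies, and you do not supply it. The claim that genus-two strata ``reduce to $\PP^1$ configurations with linear residue conditions'' is false away from the hyperelliptic loci. For $\cH^{nh}(4,-2)$, $\cH^{nh}(2,2,-2)$, $\cH(3,1,-2)$, and the genus-two strata $\cH(\mu,-1,-1)$ produced by horizontal edges, the condition $\sum m_ip_i-2q\sim K_C$ lives in the Jacobian and is invisible on the branch configuration. The paper instead maps $C$ by $|\sum m_ip_i|$ to a plane quartic with a cusp at the image of $q$. This presents the stratum as $[\cQ^\circ/PG]$ with $\cQ$ a quadric cone (the cusp condition is quadratic, not linear); it then reduces to a torus quotient and excises $Z_\mu$.

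In genus one the paper uses Belorousski's cubic model and birational identifications such as $\cH(2,2,-2,-2)\dashrightarrow\cH(2,0,0,-2)$. The residueless loci $\cR(4,-2,-2)$, $\cR(2,2,-2,-2)/S_2$, $\cR(3,1,-2,-2)/S_2$, which you leave as a ``first attempt'', each need their own rationality argument:
\begin{itemize}
\item an Euler-characteristic count for $\cR(4,-2,-2)$;
\item explicit birational maps to $\overline{M}_{1,2}/S_2$ or to a $\PP^1$-bundle over $\cH(2,-2)$ for the two components of $\cR(2,2,-2,-2)/S_2$;
\item for $\cR(3,1,-2,-2)/S_2$, a fibration over $\overline\cM_{1,1}$ after contracting the negative curves, with irreducibility of fibers proved via the $\wp$-function.
\end{itemize}
Since CKgP genuinely fails for some low-genus meromorphic strata (modular curves $\cH(m,-m)$ of positive genus), no uniform linear-model argument can be expected.

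You must also pin down exactly which vertex strata occur. The paper does this by noting that every vertex of every level graph already appears in a $3$-level graph (merge the levels above and below it), and then listing those. Likewise, your stratification would require separate CKgP proofs for all strata with horizontal edges. The paper sidesteps this by treating the closed $D_h$ through the birational map $\overline\cH(\mu,-1,-1)/S_2\to\overline\cM_{2,n}$ and applying Theorem~\ref{thm:ckgp_fibration} to its exceptional divisors.
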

The main tool is the Chow-K\"unneth generation Property (CKgP), which has been quite successful recently in the computation of the Chow and cohomology rings of $\overline\cM_{g,n}$ for small $g,n$ (see \cite{cl22}). In particular, we will use the stratification of $\overline\cH(\mu))$ into boundary strata, and prove that each of them have the CKgP to prove that $\overline\cH(\mu)$ also has the CKgP. Note that $\cH(\mu)$ has CKgP for $\mu=(4)$ or $(2,2)$ or $(3,1)$ or $(2,1,1)$ by the results of \cite{chen_gorensteinsingularity}.

Since $\overline\cH(\mu)$ is a smooth and proper Deligne-Mumford stack, the Chow-K\"unneth generation Property yields the following:
\begin{corollary}\label[corollary]{cor:main_cor}
    The rational cohomology ring and the Chow ring of $\overline{\cH}(\mu)$ for $\mu=(4),(2,2)$ and $(3,1)$ is generated by the classes of boundary strata. 
\end{corollary}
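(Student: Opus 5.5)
The corollary should follow formally from \Cref{thm:main_thm} together with the way CKgP is established, i.e.\ via the boundary stratification. Recall the standard consequences of CKgP (as in \cite{cl22}): if $X$ is a smooth proper Deligne--Mumford stack with the CKgP, then the rational cycle class map $CH^*(X)\to H^{2*}(X)$ is an isomorphism and $H^{\mathrm{odd}}(X)=0$. The argument is that CKgP for $X$ gives surjectivity of $CH_*(X\times X)$ from $CH_*(X)\otimes CH_*(X)$. The diagonal $\Delta_X$ then decomposes as $\sum \alpha_i\times\beta_i$. Letting the correspondence $\Delta_X$ act on $H^*(X)$ shows that every cohomology class is a combination of algebraic classes $\alpha_i$, which forces odd cohomology to vanish. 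Injectivity follows from Poincaré duality together with the same decomposition acting on Chow groups. Applied to $\overline\cH(\mu)$, which is smooth and proper by \cite{BCGGM}, it therefore suffices to show that $CH^*(\overline\cH(\mu))$ is additively spanned by classes of boundary strata and their closures. The ring statement then follows automatically, since both rings are identified with each other.

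For the generation statement, I would use the stratification of $\overline\cH(\mu)$ by level graphs $\Gamma$. The open strata $D_\Gamma^\circ$ are, up to finite quotient, products of open generalized strata of (projectivized) meromorphic differentials with residue conditions, one for each level. The excision sequence
\[
CH_*(Z)\to CH_*(X)\to CH_*(X\setminus Z)\to 0
\]
can then be applied inductively along a filtration by closed unions of boundary strata. This reduces the claim to showing that each open stratum $D_\Gamma^\circ$, including the interior $\cH(\mu)$ itself, has Chow groups spanned by the fundamental class together with restrictions of classes of closures of deeper boundary strata. Pushing forward along the closed embeddings $\overline{D}_\Gamma\hookrightarrow\overline\cH(\mu)$ then shows that every class is a combination of pushforwards of fundamental classes of closed strata. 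In other words, every class is a combination of boundary-stratum classes, once the fundamental class is included as the empty graph.

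The main obstacle is exactly this last property for the open pieces. Interior pieces such as $\cH(4)$, $\cH(2,2)$, $\cH(3,1)$ have been shown to have CKgP in \cite{chen_gorensteinsingularity}, but I need the stronger fact that their positive-degree Chow groups vanish or are generated by restrictions of boundary classes. I expect this to come out of the same analysis used to prove \Cref{thm:main_thm}. There, each level stratum (the plane-quartic and plane-cubic models, and residueless loci in genus $\le 2$) is shown to be a quotient of an open subset of a linear space or of a space with a stratification into affine-type pieces. Such spaces have $CH_*$ of the open part equal to $\mathbb{Q}$ in top degree. If some level stratum instead carried additional tautological classes, for example $\xi=c_1(\cO(-1))$, I would try to express them via known relations ($\xi$ as a combination of boundary divisors, as in the multi-scale literature) in order to conclude.
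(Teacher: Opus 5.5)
Your reduction is the same as the paper's. Since $\overline\cH(\mu)$ is smooth, proper and has the CKgP (\Cref{thm:main_thm}), the cycle class map is an isomorphism. Excision along the boundary stratification then reduces everything to one claim: each open stratum (the interior $\cH(\mu)$, every $D_\Gamma^o$, and the open horizontal stratum) has rational Chow ring spanned by the fundamental classes of its components.

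The gap is that this claim is the entire content of the corollary. You leave it as an expectation, and the reason you give for it does not hold. A quotient of an open subset of a linear space by a positive-dimensional group need not have trivial Chow ring, and neither need a space with an affine-type stratification (think of $\mathbb{P}^n$). For instance, $\cH^{nh}(4,-2)\cong[\cQ^\circ/PG]$ has Chow ring generated by the pullbacks of $t_1,t_2\in CH^*(BPT)$. These vanish only because the map to $BPT$ factors through (an affine bundle over) $\cM_{2,1}\setminus W_{2,1}$, whose rational Picard group is zero. Similarly, the stratifications of $\cH(2,0,0,-2)$, $\cH(2,2,-2,-2)$ and $\cH^{nh}(2,2,-2)$ leave behind the classes of the closed pieces $\cH\cap D_1$, $D_0$ and $Z_\mu^{nh}$. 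These die only because they are proportional to pullbacks of divisor classes that are zero in $\mathrm{Pic}_{\mathbb{Q}}(\cM_{1,4})$, respectively tautological on $\cM_{2,2}$. So the mechanism is comparison with $\cM_{g,n}$, not affineness, and it has to be run case by case.

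Second, the open boundary strata are not in general finite quotients of products of level strata. They are tied to $B_\Gamma^o$ only through the prong-matching correspondence $D_\Gamma^o\leftarrow U_\Gamma^o\rightarrow B_\Gamma^o$. When $U_\Gamma^o\to B_\Gamma^o$ is a connected nontrivial cover, $CH^*(U_\Gamma^o)$ is not determined by $CH^*(B_\Gamma^o)$, since pullback is only injective. This really happens for $\Gamma_2$ in $\overline\cH(3,1)$, where $D_{\Gamma_2}^o$ stays connected despite having two prong classes. There the paper needs a separate Stein factorization and Riemann--Hurwitz argument: the induced double cover of $\cH(3,1,-3,-3)\cong M_{0,4}$ is again rational, which gives $CH^*(D_{\Gamma_2}^o)=\mathbb{Q}$.

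The other pieces also need specific inputs:
\begin{itemize}
\item the triviality of $CH^*(\cH(\mu))$ for the interior;
\item the rationality and fibration structures of the residueless vertices $\cR(4,-2,-2)$, $\cR^u(2,2,-2,-2)$ and $\cR^u(3,1,-2,-2)$;
\item for the horizontal boundary and the strata exceptional for $\overline\cH\to\overline\cM_{g,n}$, the torus-fibration comparison of \Cref{rem:ckgp_bdry_chow}.
\end{itemize}
Without these, excision only says that $CH_*(\overline\cH(\mu))$ is spanned by pushforwards of arbitrary classes on the closed strata, which is no restriction at all.
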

Of course, in the course of proving \Cref{thm:main_thm} and \Cref{cor:main_cor}, we prove the analogous statements for the strata of (meromorphic) differentials (possibly with linear residue conditions on the poles) in lower genus, that manifest in the boundary of $\overline\cH(\mu)$. In particular, for $\mu=(4),\ (2,2)$ and $(3,1)$, we show that the moduli spaces $\cH(\mu,-2)$ and $\cH(\mu,-2,-2)$ parameterizing meromorphic differentials in genus two and one, respectively, have the CKgP and their Chow rings are generated by their respective fundamental classes. We also study the genus one residueless strata $\cR(4,-2,-2)$, $\cR(2,2,-2,-2)/S_2$, and $\cR(3,1,-2,-2)/S_2$, where $S_2$ permutes the two double poles.

Recall that the moduli spaces of multi-scale differentials also admit the tautological rings $R^*(\overline\cH)\subset CH^*(\overline\cH)$ and $RH^*(\overline\cH)\subset H^*(\overline\cH)$ paralleling the definition for $\overline\cM_{g,n}$ -- see \cite[\S 8]{CoMoZaEU} for precise definition. In particular, the largest version of the tautological ring discussed in {\em loc. cit.} was defined to contain also the fundamental classes each component of the boundary stratum $D_\Gamma$ as well as the horizontal boundary stratum $D_h$ (instead of the classes $[D_\Gamma]$ and $[D_h]$ collectively). Our results then shows that the moduli spaces $\overline\cH(\mu)$ for $\mu=(4),\, (2,2)$ and $(3,1)$ have the Chow (as well as the cohomology) rings isomorphic to the tautological ring (the largest version).

\begin{remark}
The moduli spaces $\cR(\mu,-2,-2)$ parameterizing residueless meromorphic differentials in genus one underlie our main difficulty in extending the results to the signatures $\mu=(2,1,1)$ and $(1,1,1,1)$. Our arguments for $\mu=(4),\, (2,2),\, (3,1)$ are ad-hoc, and exploit the fact that $\cR(\mu,-2,-2)$ is at most two-dimensional, and we do not know how to generalize to the remaining two signatures $\mu=(2,1,1),\, (1,1,1,1)$. Nevertheless, we note that for $\mu=(2,1,1)$ (resp $\mu=(1,1,1,1)$), the $S_2$ quotient $\overline\cH(\mu)/S_2$, symmetrizing any pair of simple zeros, admits a birational morphism onto $\overline{M}_{3,1}$ (resp. $\overline{M}_{3,2}$) simply by forgetting the two symmeterized simple zeros. Then by analyzing the exceptional divisors of this birational morphism, e.g. via an application of \Cref{thm:ckgp_fibration} as in \S5.2, we can prove that $\overline\cH(\mu)/S_2$ also has the CKgP in these two cases.
\end{remark}

\subsection{Idea of the proof and outline of the paper}
Of course, the idea of the proof is to prove that all the (open) boundary strata of $\overline\cH(\mu)$ have the CKgP, with the the Chow ring generated by the fundamental classes of each component. Note that the boundary strata of $\overline\cH(\mu)$ are not necessarily finitely covered by the product of the moduli spaces of lower genus (as is the case for the boundary strata in $\overline\cM_{g,n}$) -- instead, they are only related by a finite correspondence \cite[\S 4]{CoMoZaEU}. So, in addition to showing that all the lower genus moduli spaces of differentials that manifest in the boundary have the desired property, we check the finite correspondence still retains the property that we want.

On the other hand, we cannot expect all the moduli spaces of meromorphic differentials even in low genus (say 1 or 2) with few enough marked points to have the CKgP, as already seen with the modular curves $\cH(m,-m)$. As such, we need to list all the lower genus moduli spaces that appear in the boundary of $\cH(\mu)$ for $\mu=(4),\,(2,2)\,(3,1)$, and show one by one that they have the CKgP. This adds a layer of combinatorial complication, since the number of boundary strata gets untenably large as the codimension increases. However, we argue in \Cref{rem:codimension_up_to_two} that for the boundary strata represented by the level graphs with only vertical edges, we do not need to go beyond those with three levels (i.e. codimension two). The list of codimension two boundary strata will be relegated to Appendix \ref{appendix}. On the other hand, the (closed) horizontal boundary $D_h$ can self-intersect, so we deal with it separately by realizing it as the finite image of $\overline\cH(\mu,-1,-1)/S_2$, which, in turn, admits a birational morphism onto $\overline{M}_{2,n}$ (for $n=1$ or $2$), simply by forgetting the simple poles. We use this birational morphism to deal with the closed horizontal boundary in one fell swoop.

This paper will be organized as follows:
\begin{itemize}
    \item In Section~\ref{sec:background}, we recall the basics of the multi-scale compactification of the strata of differentials that will be necessary in our discussions. Additionally, we will also review the notion of Chow-K\"unneth generation Property (CKgP) and state several of its features and consequences.
    \item In~\Cref{sec:genustwo} and~\Cref{sec:genusone}, we show some of the moduli spaces in genus one and two appearing in the boundary of the genus 3 holomorphic strata $\overline\cH(\mu)$ have the CKgP. In particular, in~\Cref{sec:genustwo}, we deal with the genus two moduli spaces $\cH(\mu,-2)$ and in~\Cref{sec:genusone}, we deal with the genus one moduli spaces $\cH(\mu,-2,-2)$, where $\mu$ is one of $(4),\,(2,2)$ and $(3,1)$.
    \item In Section~\ref{sec:fs}, we prove \Cref{thm:main_thm} and \Cref{cor:main_cor} for the minimal stratum $\overline\cH(4)$.
    \item In \Cref{sec:h22}, we prove \Cref{thm:main_thm} and \Cref{cor:main_cor} for the stratum $\overline\cH(2,2)$. In the process, we also show that the stratum of residueless differentials $\cR(2,2,-2,-2)/S_2$ has the CKgP, with the Chow ring generated by the fundamental classes of the components.
    \item In Section~\ref{sec:h31}, we prove \Cref{thm:main_thm} and \Cref{cor:main_cor} for the stratum $\overline\cH(3,1)$. In the process, we also show that the stratu of residueless differentials $\cR(3,1,-2,-2)/S_2$ has the CKgP, with the Chow ring generated by its fundamental class.
    \item In Appendix~\ref{appendix}, we list all the enhanced level graphs of codimension 2 with only vertical edges in the strata $\overline\cH(\mu)$ for the three cases. We will then see that all the vertices in the level graph represent moduli spaces with CKgP and have the Chow rings generated by the fundamental classes.
    
\end{itemize}
\subsection*{Acknowledgments:} The first author would like to thank his PhD advisor Samuel Grushevsky for his support and several fruitful discussions throughout the project. The first author would also like to thank Shubham Saha for making him aware of the paper \cite{casnati_rationality} by G. Casnati and C. Fontanari. Additionally, the research of the first author was supported in part by the Humboldt Research Fellowship.\\
Some parts of the proof of \Cref{prop:chow_leray_hirsch}, especially the idea to use Voisin's spreading argument \cite[Theorem 3.1]{voisinchowdecomposition}, was inspired by the use of generative AI.

\section{Background} \label{sec:background}

\subsection{Multi-scale compactification}

In \cite{BCGGM1} and \cite{BCGGM}, Bainbridge,
Chen, Gendron, Grushevsky, and M\"{o}ller constructed the moduli space of multi-scale differentials $\overline{\cH}(\mu)$, which is a smooth compact complex orbifold containing $\cH(\mu)$ as a dense subset, and such that the boundary divisor $\partial \overline{\cH}(\mu)$ is normal crossing. 

Similarly for generalized strata $\cH(\mu^{\mathfrak{R}})\subset \cH(\mu)$, which is a sublocus where the residues of poles satisfy certain linear equations denoted by $\mathfrak{R}$, has a smooth compactification $\overline{\cH}(\mu^{\mathfrak{R}})$.

These compactifications are parameter space of multi-scale differentials, which encodes the degeneration of meromorphic differentials.

\begin{definition}
    An {\em enhanced level graph} $\Gamma$ of type $\mu$ on a marked nodal curve $(X,\boldsymbol{z} \sqcup \boldsymbol{p})$ consists of the following data:
    \begin{itemize}
        % More precise language and consistent terminology
        \item A dual graph of $(\overline{X}, \boldsymbol{z} \sqcup \boldsymbol{p})$, where each leg $z$ or $p$ in the set of half-edges $H(\Gamma)$ is decorated with the integer prescribed by $\mu$.
        
        \item A level function on the set of vertices:
        \[
        \ell: V(\Gamma) \longrightarrow \{0, -1, -2, \ldots, -N\}
        \]
        which is surjective.
        
        \item An edge $e \in E(\Gamma)$ is called {\em vertical} if it connects vertices of distinct levels; otherwise, $e$ is called {\em horizontal}.
        
        \item Each vertical edge $e$ is assigned a positive integer $\kappa_e$, called its {\em enhancement}.
    \end{itemize}

    For each $v \in V(\Gamma)$, let $g_v$ denote the genus of the irreducible component $X_v$ of $\overline{X}$. Then the following relation holds:
    \[
    2g_v - 2 = \sum_{z_i \mapsto v} a_i 
    - \sum_{p_j \mapsto v} b_j 
    + \sum_{e \in E^+(v)} (\kappa_e - 1)
    - \sum_{e \in E^-(v)} (\kappa_e + 1)
    - \sum_{e \in E^h(v)} 1
    \]
    where:
    \begin{itemize}
        \item[-] The first (second, respectively) sum is over all zeroes (poles) incident to $v$.
        \item[-] $E^+(v)$, $E^-(v)$, and $E^h(v)$ denote the sets of half-edges incident to $v$ going to lower, upper, and same-level vertices, respectively.
    \end{itemize}
\end{definition}

\begin{definition}
    A {\em twisted differential} compatible with an enhanced level graph $\Gamma$ of type $\mu^\mathfrak{R}$ on $(X, \boldsymbol{z} \sqcup \boldsymbol{p})$ is a collection of differentials $\boldsymbol{\eta} = (\eta_v)_{v \in V(\Gamma)}$ satisfying the following conditions:
    \begin{enumerate}
        \item \textbf{Prescribed orders:} The orders of the singularities of $\eta_v$ are determined by the decorations on the half-edges at $v$.
        
        \item \textbf{Balancing of horizontal nodes:} For every horizontal node $e$,
        \[
        \res_{e^+} \boldsymbol{\eta} + \res_{e^-} \boldsymbol{\eta} = 0.
        \]
        
        \item \textbf{$\mathfrak{R}$-Global Residue Conditions (GRCs):} Let $\widetilde{\Gamma}^\mathfrak{R}$ be an auxiliary level graph obtained by adding a vertex $v_R$ at level $\infty$ for each part $R \in \mathfrak{R}$, and connecting each $p_j$ ($j \in R$) to $v_R$. Let $\widetilde{\Gamma}^\mathfrak{R}_{>L}$ denote the subgraph consisting of half-edges and vertices strictly above level $L$.

        For every finite level $L$, any connected component $Y$ of $\widetilde{\Gamma}^\mathfrak{R}_{>L}$ containing all poles of entire parts $R \in \mathfrak{R}$ must satisfy the residue condition:
        \[
        \sum_{e \in E^+(Y, L)} \res_e \boldsymbol{\eta} = 0,
        \]
        where $E^+(Y, L)$ is the set of edges connecting $Y$ to vertices at level $L$.
    \end{enumerate}
    
    For brevity, we refer to the $\mathfrak{R}$-Global Residue Conditions as {\em $\mathfrak{R}$-GRCs}.
\end{definition}

A {\em multi-scale differential} in $\overline{\cH}(\mu^\mathfrak{R})$ is a tuple
\[
(X, \boldsymbol{z} \sqcup \boldsymbol{p}, \Gamma, \boldsymbol{\eta}, \boldsymbol{\sigma}),
\]
where:
\begin{itemize}
    \item $(X, \boldsymbol{z} \sqcup \boldsymbol{p})$ is a pointed nodal curve;
    
    \item $\Gamma$ is an enhanced level graph of type $\mu^\mathfrak{R}$ on $(X, \boldsymbol{z} \sqcup \boldsymbol{p})$;
    
    \item $\boldsymbol{\eta} = (\eta_v)_v$ is a {\em twisted differential} compatible with the enhanced level graph $\Gamma$;
    
    \item $\boldsymbol{\sigma} = (\sigma_e)_e$ is a collection of cyclic-order-reversing maps identifying the geodesic rays on the two branches at each node. We call $\boldsymbol{\sigma}$ the {\em global prong-matching}, and it can be represented by an element in the {\em prong-matching group}
    \[
    P_\Gamma := \prod_e \mathbb{Z} / \kappa_e \mathbb{Z}.
    \]
\end{itemize}
The multiscale differentials only retain information on each level up to projectivization, so we have to consider the action by rescaling level-by-level of a multiplicative torus, called the level rotation torus $T_\Gamma$, isogeneous to $(\mathbb{C}^*)^{L(\Gamma)+1}$, on the space of twisted differentials compatible with the enhanced level graph $\Gamma$ (here, $L(\Gamma)$ is the number of levels below the top). This action also has the effect of twisting the prong matchings on the vertical edges. The moduli space of multiscale differentials then parametrizes the equivalence classes of multiscale differentials under such action.We refer to \cite{BCGGM} or \cite{CoMoZaEU} for details and recap what we will need about the equivalence classes of prong matchings:

Given an enhanced level graph $\Gamma$ with enhancements $\{\kappa_e\}$ on the vertical edges, there are altogether $\prod_e\kappa_e$ possible choices of prong matchings on a compatible multiscale differential. Then understanding the equivalence classes of prong matchings amounts to understanding the equivalence classes under the rotation of the prongs by the level rotation group $R_\Gamma\cong \mathbb{Z}^{L(\Gamma)}$ whose $i^{\text{th}}$ factor acts on the $i^{\text{th}}$ level passage (that is, all the edges that cross the virtual horizontal line right above the level $-i$) by diagonally turning in a fixed direction the the prong-matchings at each edge crossing this level passage. The stabilizer of all the prong-matchings simultaneously is called the \textit{twist group}. The subgroup \textit{simple twist group} $Tw_\Gamma^s\subset Tw_\Gamma$ consists of elements that can be written as a product of prong rotations that act on one level passage only. The quotient $Tw_\Gamma/Tw_\Gamma^s$ is called the \textit{group of ghost automorphisms}. As explained in \cite[\S 2.2]{ccm}, the isotropy group of a multi-scale differential $(C,\omega)$ is an extension of the group of automorphisms $\text{Aut}(C,\omega)$ by the group of ghost automorphisms.

We will frequently use the term ``codimension of a level graph'' to mean the number of levels below the top (that is, the number of level passages) plus the number of horizontal edges. Indeed, the codimension of a level graph coincides with the codimension of the associated boundary stratum. In particular, a level graph of codimension one is called ``divisorial''. We will also call a boundary stratum given by a divisorial level graph with only one horizontal edge a ``horizontal divisor'' and those with no horizontal edge a ``vertical divisor''. Additionally, for an enhanced level graph $\Gamma$ with enhancements $\{\kappa_e\}$, we will use the following notations: $\ell_\Gamma\coloneq \text{lcm}(\kappa_e)$ and $K_\Gamma=\prod\kappa_e$.

If all poles are residueless then we will denote $\cH(\mu^{\mathfrak{R}})$ and $\overline\cH(\mu^{\mathfrak{R}})$ simply by $\cR(\mu)$ and $\overline\cR(\mu)$, respectively. Similarly, if the signature $\mu$ (or $\mu^{\mathfrak{R}}$) is clear (or not important), we will denote $\cH(\mu)$ and $\overline\cH(\mu)$ simply by $\cH$ and $\overline\cH$.

\subsection{Boundary strata of $\overline{\cH}(\mu)$}\label{sec:boundary_correspondence}
The boundary $\partial\cH \coloneq \overline\cH\setminus\cH$ of the multi-scale compactification is stratified by the enhanced level graphs. Given an enhanced level graph $\Gamma$, we will denote by $D_\Gamma$ the (closed) boundary stratum given by the closure of the locus of multi-scale differentials compatible with the enhanced level graph $\Gamma$.

For each level $i$ of an enhanced level graph $\Gamma$ with $L+1$ levels, we denote by $\Gamma^{(i)}$ the subgraph of $\Gamma$ consisting of all the vertices and half-edges in the level $i$ of $\Gamma$ (along with the horizontal edges). Then we consider the product $B_\Gamma^{[i]}$ of the unprojectivized generalized strata of differentials at all vertices $v\in V(\Gamma^{(i)})$, subject to all the relevant residue conditions, and up to the action of $\mathbb{C}^*$ that rescales all the differentials on each of the components simultaneously. If $i=0$ or $i=-L$, we will also denote $B_\Gamma^{[i]}$ by $B_\Gamma^\top$ or $B_\Gamma^\bot$, respectively. Put $B_\Gamma\coloneqq\prod_{i=-L}^0 B_\Gamma^{[i]}$ then by \cite[Prop. 4.4]{CoMoZaEU}, there is a proper stack $D_{\Gamma}^s$ (called a simple boundary stratum) and finite morphisms \[c_\Gamma:D_\Gamma^s\rightarrow D_\Gamma\quad \text{and}\quad p_\Gamma:D_\Gamma^s\rightarrow B_\Gamma,\]
ramified only along the boundary, such that $\frac{\text{deg}(p_\Gamma)}{\text{deg}(c_\Gamma)}=\frac{K_\Gamma}{\ell_\Gamma\cdot|\text{Aut}(\Gamma)|}$. In fact, the discussions of \cite[\S 4]{CoMoZaEU} shows something more. Suppose $\Delta$ is any degeneration of $\Gamma$ and denote by $D_{\Gamma,\Delta}$ (resp. $B_{\Gamma,\Delta}$) the subset of $D_\Gamma$ (resp. $B_\Gamma$) parameterizing multi-scale differentials compatible with an undegeneration of $\Delta$. Then there analogously is a stack $U_{\Gamma,\Delta}$ and finite morphisms $c_{\Gamma,\Delta}:U_{\Gamma,\Delta}\rightarrow D_{\Gamma,\Delta}$ and $p_{\Gamma,\Delta}:U_{\Gamma,\Delta}\rightarrow B_{\Gamma,\Delta}$. The aforementioned simple boundary stratum $D_\Gamma^s$ is then constructed as the normalization of a cover that dominates all $U_{\Gamma,\Delta}$'s.

In particular, if $\Gamma=\Delta$ then we will frequently denote the open subsets $D_{\Gamma,\Gamma}$ and $B_{\Gamma,\Gamma}$ by $D_\Gamma^o$ and $B_\Gamma^o$, respectively. And the finite morphisms (which are now \'etale covers) $c_{\Gamma,\Gamma}$ and $p_{\Gamma,\Gamma}$ by $c_\Gamma^o$ and $p_\Gamma^o$, respectively. The degrees of these morphisms are explicit: $\text{deg}(p_\Gamma^o)=K_\Gamma/[R_\Gamma:Tw_\Gamma]$ (i.e. the number of prong-matching equivalence classes) and $\text{deg}(c_\Gamma^o)=[Tw_\Gamma:Tw_\Gamma^s]\cdot |\text{Aut}(\Gamma)|$, so that $\text{deg}(p_\Gamma)/\text{deg}(c_\Gamma)=\text{deg}(p_\Gamma^o)/\text{deg}(c_\Gamma^o)$.

\subsection{Chow-K\"unneth generation Property}\label{subsection:ckgp_intro}
In this section, we will summarize the basics about Chow-K\"unneth generation property (CKgP) which will play an important role in our discussion later. For more details, we refer to \cite[\S 3]{cl22} and \cite[\S 4]{clp23}.
\begin{definition}
  A space (scheme or an algebraic stack) $X$ is said to have the Chow-K\"unneth generation Property (abbreviated CKgP) if for all spaces $Y$ (of finite type and admitting a stratification by global quotient stacks), the cup product map on the rational Chow rings:
  \[CH_*(X)\otimes CH_*(Y)\rightarrow CH_*(X\times Y)\]
  is surjective.
\end{definition}
Now, we enumerate few important features of CKgP that will be useful later:
\begin{enumerate}
\item If $X$ has the CKgP and $U\subset X$ is open then $U$ also has the CKgP \cite[Prop. 4.2(1)]{clp23}.
\item If $X\rightarrow Y$ is a proper and surjective morphism of Deligne-Mumford stacks and $X$ has the CKgP then so does $Y$ \cite[Prop. 4.2(2)]{clp23}.
\item If $X$ admits a finite stratification $X=\bigsqcup_{S\in \Sigma}$ such that each $S$ is locally closed and has the CKgP then so does $X$ \cite[Lemma 3.4]{cl22}.
\item If $\pi:V\rightarrow X$ is an affine bundle then $V$ has the CKgP if and only if $X$ does \cite[Lemma 3.5]{cl22}.
\item Suppose $X$ is a stack that admits a coarse moduli space $X\rightarrow M$ then $X$ has the CKgP if and only if $M$ does \cite[Lemma 3.9]{cl22}.
\item If $X_1$ and $X_2$ are spaces with the CKgP then $X_1\times X_2$ also has the CKgP \cite[Lemma 3.2]{cl22}.
\item If $X$ is a smooth, proper Deligne-Mumford stack that has the CKgP, then the cycle class map $\text{cl}:CH^*(X)\rightarrow H^*(X,\mathbb{Q})$ is a ring isomorphism. In particular, $X$ has no odd rational cohomology \cite[Lemma 3.11]{cl22}.
\end{enumerate}
\begin{remark}
    If $X$ is a rational surface then $X$ has the CKgP. This can be easily seen as an easy consequence of the feature (iii) above. Indeed, resolving the birational map $X\dashrightarrow\mathbb{P}^2$, we can assume $X$ is proper and admits a birational morphism onto $\mathbb{P}^2$. Then $X=U\cup E$, where $U\subset \mathbb{P}^2$ and $E$ is a union of rational curves, each of which has the CKgP.
\end{remark}
In addition to these features of CKgP, we will also need to understand the following scenario: given $X\rightarrow Y$ a birational morphism with exceptional divisor $E\subset X$ centered over $Z\subset Y$, if $Z$ and $Y$ both have the CKgP, when can we conclude $X$ also has the CKgP? To this end, the following result will play important role in our discussion.

\begin{proposition}[Chow-theoretic Leray-Hirsch]\label[proposition]{prop:chow_leray_hirsch}
    Let $f:X\rightarrow Y$ be a proper morphism between smooth Deligne-Mumford stacks (not necessarily compact). Suppose that:
    \begin{enumerate}
    \item Each fiber $F_y$ is a smooth, proper and irreducible Deligne-Mumford stack with finitely generated rational Chow ring.
    \item There are classes $\xi_1,\ldots,\xi_\ell\in CH^*(X)$ that descend to independent classes in $H^*(X)$, also denoted $\xi_1,\ldots,\xi_\ell$, such that for every fiber $F_y\subset X$, both the cohomology ring $H^*(F_y,\bQ)$ and the Chow ring $CH^*(F_y)$ are $\bQ$-linearly generated by $\xi_1|_{F_y},\ldots,\xi_\ell|_{F_y}$.
    \end{enumerate}
    Then the morphism $\phi:CH_*(Y)\otimes CH_*(F)\cong \bigoplus_{i=1}^\ell \xi_i\cdot CH_*(Y)\rightarrow CH_*(X)$ is surjective, where $F$ is a fiber of $f$ (which is unambiguous since they all have isomorphic rational Chow ring).
\end{proposition}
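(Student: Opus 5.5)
The plan is to follow the classical Leray--Hirsch strategy on Chow groups. We use the cohomological hypothesis only to pin down the fiber, and a spreading-out argument of Voisin's type \cite[Theorem 3.1]{voisinchowdecomposition} to pass from fibers to the total space. The map is $\phi(\sum_i \alpha_i\otimes \xi_i)=\sum_i \xi_i\cdot f^*\alpha_i$. This makes sense because $X$ is smooth, so $\xi_i$ acts on $CH_*(X)$. Since $f$ is proper but not necessarily flat, I would define $f^*$ via the Gysin pullback along the graph $X\to X\times Y$, using that $Y$ is smooth.

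\textbf{Step 1: reduction to a generic-fiber statement by Noetherian induction on $Y$.} Take an irreducible cycle $Z\subset X$ with image $W=f(Z)\subset Y$. We want to show $[Z]\in \operatorname{im}\phi$ modulo classes supported over a proper closed subset of $W$. Induction on $\dim W$ combined with the localization sequence $CH_*(f^{-1}(Y'))\to CH_*(X)\to CH_*(X\setminus f^{-1}(Y'))\to 0$ then gives surjectivity. Here we must run the induction with the restricted families $f^{-1}(Y')\to Y'$, which need not be smooth. To avoid this, I would work with refined Gysin maps, so that $\xi_i\cdot f^*$ makes sense on $f^{-1}(Y')$ using only smoothness of the ambient $X$ and $Y$.

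\textbf{Step 2: the generic point.} Let $\eta$ be the generic point of $W$, with residue field $K=\mathbb{C}(W)$. Then $Z$ restricts to a cycle $Z_\eta$ on the fiber $X_\eta$. By hypothesis (2), the restrictions $\xi_i|_{F_y}$ span $CH^*(F_y)$ for every closed point $y$, and I want the same over $\bar K$. The idea is to choose an uncountable algebraically closed field containing $K$ and identify $X_{\bar\eta}$ abstractly with a very general complex fiber. Since $CH^*(F_y)$ is finite-dimensional and spanned by the $\xi_i$, the Chow groups of geometric fibers are "constant". Then Bloch--Srinivas / Voisin spreading gives $[Z_{\bar\eta}]=\sum_i c_i\,\xi_i|_{X_{\bar\eta}}$ with $c_i\in\mathbb{Q}$ (after passing to codimension-graded pieces). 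A norm/trace argument over a finite extension $K'/K$ lets us descend this to $K$ with rational coefficients, since we are working with $\mathbb{Q}$-coefficients. Spreading out, we get a dense open $W^o\subset W$ (after a generically finite base change, which is handled by pushforward) such that
\[
[Z]-\sum_i \xi_i\cdot f^*(c_i [W]) \quad\text{is supported on } f^{-1}(W\setminus W^o),
\]
up to classes of lower-dimensional image. This completes the inductive step.

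\textbf{Step 3: the role of the cohomological independence.} The independence of the $\xi_i$ in $H^*(X)$, together with generation of $H^*(F_y)$, is used to show that the number $\ell$ equals $\dim CH^*(F)=\dim H^*(F)$. It also shows that the identification $CH_*(Y)\otimes CH_*(F)\cong\bigoplus \xi_i\, CH_*(Y)$ is well defined. Concretely, the restrictions $\xi_i|_{F}$ form a basis of $H^*(F)$, and hence also of $CH^*(F)$, because the cycle map is surjective onto the span. This basis property is exactly what makes the coefficients $c_i$ in Step 2 unique, and uniqueness is what allows them to be defined over $K$ and spread out.

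\textbf{Main obstacle.} I expect the hardest part to be Step 2: passing from the hypothesis on \emph{closed} fibers to the \emph{geometric generic} fiber. Chow groups are not a priori constant in families, and the decomposition must be shown to hold over $\bar K$ rather than just at complex points. My first attempt would be Voisin's argument: the locus of $y$ where a given cycle class decomposes is a countable union of closed subsets, and since it contains all closed points, one component dominates $W$. This yields a relative decomposition over a finite cover of an open subset of $W$, which we then push down.
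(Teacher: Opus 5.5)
Your argument is correct, but it follows a genuinely different route from the paper.

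\textbf{The paper's route.} The paper decomposes the relative diagonal. It uses finite generation of $CH^*(F)$ to get Poincar\'e duality on the fibers and chooses fiberwise dual classes $\zeta_i$. It observes that $\mathcal{Z}=\Delta_{X/Y}-\sum_i p_1^*\xi_i\cdot p_2^*\zeta_i$ restricts to zero on every $F_y\times F_y$. It then applies Voisin's spreading theorem to $X\times_Y X\to Y$ to move $\mathcal{Z}$ over a proper closed $Y'\subset Y$, and disposes of that piece by Noetherian induction. Finally it lets $\Delta_{X/Y}$ act as a correspondence, which expresses any $\alpha$ explicitly through $f_*(\alpha\cdot\zeta_i)$ and $f_*(\alpha\cdot\xi_j)$.

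\textbf{Your route.} You work cycle by cycle. For irreducible $Z$ with $W=f(Z)$, you transport the fiber hypothesis to the geometric generic fiber over $W$. You then descend and spread the resulting relation to an open $W^o\subset W$. You induct on $\dim f(Z)$ entirely inside $CH_*(X)$, using localization on $f^{-1}(W)$.

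\textbf{Comparison.} Your route is more elementary and needs strictly less. It uses only the Chow-theoretic spanning on closed fibers: no cohomological input, no Poincar\'e duality, no dual classes, and no intersection theory on $X\times_Y X$. The paper's route yields more structure than bare surjectivity: a relative decomposition of the diagonal and a projector-type formula for every class.

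\textbf{Two points to make explicit.} In Step 2, take the isomorphism $\overline{\mathbb{C}(W)}\cong\mathbb{C}$ over $\overline{k_0(W)}$, where $k_0$ is a countable algebraically closed field over which $X$, $Y$, $f$, $W$ and cycles representing the $\xi_i$ are defined. Otherwise the isomorphism need not carry $\xi_i|_{X_{\bar\eta}}$ to $\xi_i|_{F_y}$. Also note that you, like the paper, use flatness of $f$, so that $f^![W]$ restricts to $[X_\eta]$ and $f^*[y]=[F_y]$.

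\textbf{Step 3 is wrong as stated, but harmless.} Independence of the $\xi_i$ in $H^*(X)$ does not force $\ell=\dim H^*(F)$. For instance, for $X=Y\times F$, adjoin to a basis pulled back from $F$ a nonzero algebraic class pulled back from $Y$. All hypotheses still hold, but the new class restricts to zero on every fiber. The basis property is tacitly assumed in the statement's identification $CH_*(Y)\otimes CH_*(F)\cong\bigoplus_i\xi_i\cdot CH_*(Y)$ (and in the paper's choice of a dual basis); it is not a consequence of the hypotheses. Your proof does not need uniqueness of the $c_i$ anyway. The map $CH_*(X_\eta)_{\mathbb{Q}}\to CH_*(X_{\bar\eta})_{\mathbb{Q}}$ is injective by the norm argument, so any decomposition found over $\bar K$ already holds over $K$.
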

Assuming $f$ is a pure Tate fiber bundle, a stronger statement for {\em integral} Chow ring was proven in \cite[Theorem 2.1]{motivic_leray_hirsch}.
\begin{proof}
    Since $CH^*(F)$ is finitely generated, by \cite[Theorem 7.1]{clpeleventhcohomology}, $CH^*(F)\cong H^*(F,\bQ)$ has the Poincar\'e duality. We denote by $\zeta_1,\ldots\zeta_\ell$ the fiberwise Poincar\'e dual basis of $\xi_1,\ldots,\xi_\ell$, that is, $\xi_i|_y\cdot \zeta_j|_y=\delta_{i,j}[F_y]$ for each $y$. Suppose $p_i$ (for $i=1,2$) are the projections of $X\times_Y X$ onto the two factors. Then the cycle class
    \[\sum_i p_1^*\xi_i\cdot p_2^*\zeta_j\in CH^*(X\times_Y X)\] restricts on each fiber $F_y\times F_y$ to the class of the diagonal $\Delta_{F_y}$ (see, e.g. \cite[Theorem 11.11]{milnorstasheff}). So if $\Delta_{X/Y}$ is the relative diagonal of $X$ over $Y$, the cycle class
    \[\mathcal{Z}\coloneqq \Delta_{X/Y}-\sum_i p_1^*\xi_i\cdot p_2^*\zeta_j\in CH^*(X\times_Y X)\] restricts to 0 on each fiber $F_y\times F_y$. We will prove that $\mathcal{Z}$ is contained in the image of $CH_*(Y)\otimes CH_*(F\times F)$ in $CH(X\times_Y X)$. Note that the fibration $g:X\times_Y X\rightarrow Y$ also satisfies the assumptions of the Proposition, except the Chow ring of all fibers are generated by $p_1^*\xi_i$'s and $p_2^*\xi_j$'s.
    
    Let us choose some representatives for the classes $p_1^*\xi$ and $p_2^*\zeta_j$ (which we denote by the same notation, for brevity) so that $\mathcal{Z}$ is now an honest cycle. Then, by \cite[Theorem 3.1]{voisinchowdecomposition} applied to the morphism $g:X\times_Y X\rightarrow Y$, the cycle class $\mathcal{Z}$ is rationally equivalent to a cycle $\mathcal{Z}'$ supported in $g^{-1}(Y')$ for a proper subvariety $Y'\subset Y$. (Note that we apply \cite[Theorem 3.1]{voisinchowdecomposition} to the cycle $\mathcal{Z}$ which satisfies the condition (*) preceding the theorem of {\em loc. cit.} simply for the empty subvariety $X'=\emptyset\subset X$.)

    If $U=Y\setminus Y'$ then this proves $\mathcal{Z}|_{g^{-1}(U)}=0$. Now, we proceed with the noetherian induction on $\text{dim}(Y')$ to conclude that $\mathcal{Z}'$ is contained in the image of 
    \[CH_*(Y')\otimes CH_*(F\times F)\cong \bigoplus_{i,j}p_1^*\xi_i\cdot p_2^*\xi_j\cdot CH_*(Y)\]
    in $CH_*(g^{-1}(U))$. So, let us put $\mathcal{Z}'=\sum_{i,j}p_1^*\xi_i\cdot p_2^*\xi_j\cdot g^*\alpha_{i,j}$ then we have:
    \[\Delta_{X/Y}=\sum_i p_1^*\xi_i\cdot p_2^*\zeta_j +\sum_{i,j}p_1^*\xi_i\cdot p_2^*\xi_j\cdot g^*\alpha_{i,j}.\]
    Now we can use the diagonal correspondence $\Delta_{X/Y}$ to readily prove $\phi:CH_*(Y)\otimes CH_*(F)\rightarrow CH_*(X)$ is surjective. Indeed, given $\alpha\in CH_*(X)$, noting $g=f\circ p_1=f\circ p_2$, we have
    \begin{align*}
        \alpha &= (p_1)_*(p_2^*\alpha\cdot \Delta_{X/Y})=\sum_i(p_1)_*(p_2^*\alpha\cdot p_1^*\xi_i\cdot p_2^*\zeta_i)+\sum_{i,j}(p_1)_*\left(p_1^*\xi_i\cdot p_2^*\xi_j\cdot g^*\alpha_{i,j}\cdot p_2^*\alpha\right)\\
        &=\sum_i \xi_i\cdot (p_1)_*p_2^*(\alpha\cdot \zeta_i)+\sum_{i,j}(p_1)_*\left(p_1^*(\xi_i\cdot f^*\alpha_{i,j})\cdot p_2^*(\xi_j\cdot \alpha)\right)\\
        &= \sum_i \xi_i\cdot (p_1)_*p_2^*(\alpha\cdot \zeta_i)+\sum_{i,j}\xi_i\cdot f^*\alpha_{i,j}\cdot (p_1)_*p_2^*(\xi_j\cdot \alpha),
    \end{align*} so by flat base change \cite[Proposition 1.7]{fultonintersection}, we have:
    \[\alpha=\sum_i \xi_i\cdot f^*f_*(\alpha\cdot \zeta_i)+\sum_{i,j}\xi_i\cdot f^*(\alpha_{i,j}\cdot f_*(\xi_j\cdot \alpha)).\]
\end{proof}
While not strictly necessary for us in future, let us remark that the surjective morphism $\phi$ in the above proposition is actually an isomorphism: the injectivity part of the argument in \cite[Theorem 2.1]{motivic_leray_hirsch} adapts to our setting almost verbatim.

We will use this proposition to prove that if $Y$ additionally has the CKgP then so does X:
\begin{theorem}\label[theorem]{thm:ckgp_fibration}
Let $f: X \rightarrow Y$ be a proper morphism between smooth Deligne-Mumford stacks (not necessarily compact). Suppose that:
\begin{enumerate}
    \item $Y$ is unirational and has the Chow--K\"unneth generation Property (CKgP).
    \item Every fiber $F_y$ is smooth, irreducible and has CKgP with rational cohomology (including the Hodge structures) isomorphic to a fixed simplicial toric variety $V$.
    \item There are classes $\xi_1,\ldots,\xi_\ell\in CH^*(X)$ that descend to independent classes in $H^*(X)$, also denoted $\xi_1,\ldots,\xi_\ell$, such that for every fiber $F_y\subset X$, both the cohomology ring $H^*(F_y,\bQ)$ and the Chow ring $CH^*(F_y)$ are $\bQ$-linearly generated by $\xi_1|_{F_y},\ldots,\xi_\ell|_{F_y}$.
\end{enumerate}
Then $X$ also has the CKgP.
\end{theorem}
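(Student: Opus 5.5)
The plan is to verify the definition of CKgP directly. Fix an arbitrary $Y'$ of finite type admitting a stratification by global quotient stacks; we must show that $CH_*(X)\otimes CH_*(Y')\to CH_*(X\times Y')$ is surjective. The idea is to apply \Cref{prop:chow_leray_hirsch} not to $f$ but to the base-changed morphism
\[f\times\mathrm{id}: X\times Y'\longrightarrow Y\times Y',\]
with the pulled-back classes $\xi_i\otimes 1:=\mathrm{pr}_X^*\xi_i\in CH^*(X\times Y')$. Its fibers are the same $F_y$, so hypotheses (1) and (2) of the proposition are inherited from (2) and (3) of the theorem. Finite generation of $CH^*(F_y)$ holds because $CH^*(F_y)\cong H^*(V)$ is finite-dimensional, being spanned by the restrictions of the $\xi_i$. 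Granting the proposition, we get a surjection
\[\bigoplus_{i=1}^{\ell}(\xi_i\otimes 1)\cdot (f\times \mathrm{id})^*CH_*(Y\times Y')\twoheadrightarrow CH_*(X\times Y').\]
Since $Y$ has the CKgP, $CH_*(Y\times Y')$ is generated by classes $a\times b$ with $a\in CH_*(Y)$ and $b\in CH_*(Y')$. Each generator therefore becomes $(\xi_i\cdot f^*a)\times b$, which lies in the image of $CH_*(X)\otimes CH_*(Y')$. This gives the CKgP for $X$.

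The main obstacle is that \Cref{prop:chow_leray_hirsch} is stated for smooth Deligne--Mumford stacks, while $Y'$, and hence $Y\times Y'$ and $X\times Y'$, may be singular. I would handle this in two steps.

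\emph{Reduction to smooth $Y'$.} First reduce, by the stratification property (iii) together with the localization exact sequence, to the case where $Y'$ is irreducible and smooth. Concretely, stratify $Y'$ into smooth quotient strata; the exact sequence $CH_*(X\times Z)\to CH_*(X\times Y')\to CH_*(X\times U)\to 0$ then lets us proceed by noetherian induction, as in \cite[Lemma 3.4]{cl22}.

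\emph{Why the proof only needs the fibers to be nice.} Next, observe that the proof of \Cref{prop:chow_leray_hirsch} really only uses the following ingredients: fiberwise smoothness, which makes the relative diagonal and the fiberwise Poincar\'e dual classes $\zeta_j$ meaningful; Voisin's spreading argument applied to $g$; noetherian induction on the base; and flat pullback and proper pushforward along $p_1,p_2$. None of these requires the base to be smooth, provided the dual classes $\zeta_j$ can be constructed as global classes on $X$. For the theorem they are pulled back from $X$, so smoothness of $X$ suffices.

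The remaining hypotheses are used at the subtle steps. Unirationality of $Y$ and the fixed toric model $V$ (with its Hodge structure) enter where the proof needs the fiberwise classes to be uniform over the base. Specifically, the $\zeta_j$ can be written as fixed $\mathbb{Q}$-linear combinations of products of the $\xi_i$: the intersection pairing on $H^*(F_y)$ in the basis $\xi_i|_{F_y}$ is locally constant, hence constant, since $Y$ is irreducible. Meanwhile, the Tate-type cohomology of $V$ guarantees that the fiber-cohomology class and the fiber-Chow class of the diagonal agree, which is what makes $\mathcal{Z}$ vanish on fibers in Chow rather than merely in cohomology. I expect verifying that Voisin's theorem applies over the possibly singular base $Y\times Y'$, or reducing away from it, to be the most delicate step.
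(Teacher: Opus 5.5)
Your core step is the paper's own argument. You apply the Chow-theoretic Leray--Hirsch proposition to $X\times Y'\to Y\times Y'$ with the classes $\mathrm{pr}_X^*\xi_i$. You then use the CKgP of $Y$ to write each class on $Y\times Y'$ as a sum of exterior products $a\times b$, so that $(\xi_i\cdot f^*a)\times b$ lies in the image of $CH_*(X)\otimes CH_*(Y')$.

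The two proofs differ only in how they pass from smooth to singular test spaces, and both routes are valid.
\begin{itemize}
\item The paper takes a desingularization $\tilde T\to T$. It uses that the proper pushforward $CH_*(X\times\tilde T)\to CH_*(X\times T)$ is surjective with $\mathbb{Q}$-coefficients, and that exterior products commute with it. The reduction is one line, but it needs a resolution of the test space.
\item You stratify $Y'$ into smooth pieces and run noetherian induction with the localization sequence $CH_*(X\times Z)\to CH_*(X\times Y')\to CH_*(X\times U)\to 0$, lifting classes from $CH_*(U)$ to $CH_*(Y')$. This needs only generic smoothness and excision, and it is the same mechanism as the stratification property of the CKgP.
\end{itemize}

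Once your first reduction is done, $Y\times Y'$ is smooth and the proposition applies verbatim. So your second step is unnecessary: the claim that the proposition's proof does not need a smooth base, and the concern about applying Voisin's theorem over a singular base, can both be dropped.

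Your account of where unirationality and the toric model $V$ enter does not match the paper. In the paper's proof, unirationality of $Y$ is never invoked, and the fiber hypotheses serve only to verify the proposition's assumptions. In particular, the agreement of Chow and cohomology classes on fibers comes from finite generation of $CH^*(F)$ via the cited result of Canning--Larson--Payne, not from any Tate-type property of $V$.
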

The second and third hypotheses imply all the direct images $R^qf_*\bQ$ are constant sheaves.
\begin{proof}
First, we suppose $T$ is a smooth Deligne-Mumford stack and prove that the morphism $CH_*(X)\otimes CH_*(T)\rightarrow CH_*(X\times T)$ is surjective. For this we apply \Cref{prop:chow_leray_hirsch} to the fibration $X\times T\rightarrow Y\times T$, which clearly satisfies all the assumptions of the proposition. So there is a surjection
\[\bigoplus_{i=1}^\ell \xi_i\cdot CH_*(Y\times T)\rightarrow CH_*(X\times T).\]
Consequently, by the Chow-K\"unneth generation Property of Y, we obtain a surjective morphism
\[\bigoplus_{i=1}^\ell \xi_i\cdot CH_*(Y)\otimes CH_*(T)\rightarrow CH_*(X\times T).\]
This morphism naturally factors through $CH_*(X)\otimes CH_*(T)$, so the composition
\[\bigoplus_{i=1}^\ell \xi_i\cdot CH_*(Y)\otimes CH_*(T)\rightarrow CH_*(X)\otimes CH_*(T)\rightarrow CH_*(X\times T)\]
is surjective. Consequently, the exterior product morphism $CH_*(X)\otimes CH_*(T)\rightarrow H_*(X\times T)$ is surjective, as we wanted.

Now, we prove the same for possibly singular T. In this case, we take the proper birational desingularization $\tilde{T}\rightarrow T$. Then we have the commutative diagram:
\[
\begin{array}{ccc}
CH_*(X) \otimes CH_*(\tilde{T}) & \longrightarrow & CH_*(X\times \tilde{T}) \\
\Big\downarrow \scriptstyle & & \Big\downarrow \scriptstyle \\
CH_*(X) \otimes CH_*(T) & \longrightarrow & CH_*(X\times T)\ .
\end{array}
\]
The horizontal arrows are exterior product morphisms, whereas the vertical arrows are proper pushforward. Since the top horizontal morphism and the two proper pushforwards are surjective, it follows that $CH_*(X)\otimes CH_*(T)\rightarrow CH_*(X\times T)$ is also surjective. Thus, $X$ has the CKgP.
\end{proof}

\begin{remark}\label[remark]{rem:ckgp_bdry}
  We will apply \Cref{thm:ckgp_fibration} to the exceptional divisors of the morphism $f:\overline\cH_{g,n}(\mu)\eqcolon\overline\cH \rightarrow \overline\cM_{g,n}$, which is birational onto its image. Indeed suppose $D_\Gamma\subset \overline\cH$ is an exceptional divisor then each component of a fiber of $f:D_\Gamma\rightarrow \delta_\Gamma\coloneqq f(D_\Gamma)$ is finitely covered by a simplicial toric variety \cite[Prop. 3.3]{ccm}, which, in particular, implies the fibers all have the CKgP. For simplicity, let us assume the fibers are connected (if not, this is rectified by replacing $f$ by morphism to the normalization of $f(\overline\cH)$) If we restrict $f$ to the open subset $\delta_\Gamma^o\subset\delta_\Gamma$ where the curves do not degenerate further then the fibers of $f^{-1}(\delta_\Gamma^o)\rightarrow \delta_\Gamma^o$ are all irreducible and covered by simplicial toric varieties of uniform combinatorial type. This is because the torus invariant subvarieties all correspond to the degenerations of the level graph $\Gamma$ that keeps the underlying dual graph fixed (see the discussion in \cite[\S 3]{ccm}) (in particular, they are all pulled back from $f^{-1}(\delta_\Gamma^o)$), so the combinatorial type of the toric variety only depends on $\Gamma$. Since the cohomology ring of a simplicial toric variety is generated by the torus invariant subvarieties \cite{cls}, we see that $f:f^{-1}(\delta_\Gamma^o)\rightarrow \delta_\Gamma^o$ satisfies the hypotheses of \Cref{thm:ckgp_fibration} if $\delta_\Gamma^o$ has the CKgP (since the Chow and cohomology rings of each fiber are generated by pullback of fixed collection of cycle classes in $f^{-1}(\delta_\Gamma^o)$, the local systems $R^1f_*\mathbb{Q}$ are all trivial). This allows us to conclude $f^{-1}(\delta_\Gamma^o)$ also has the CKgP. Additionally, by stratifying $D_\Gamma$ so that for each strata the hypotheses of \Cref{thm:ckgp_fibration} are satisfied, we can conclude $D_\Gamma$ itself also has the CKgP.
\end{remark}
\begin{remark}\label[remark]{rem:ckgp_bdry_chow}
  Following the notations from \Cref{rem:ckgp_bdry}, note that Leray-Hirsch type theorem gives the rational cohomology of $f^{-1}(\delta_\Gamma^o)$ as module in terms of the rational cohomology of the base $\delta_\Gamma^o$ and the rational cohomology of a fiber $F$ (since all fibers have isomorphic rational cohomology, the choice of a fiber does not matter): $H^*(f^{-1}(\delta_\Gamma^o))\cong H^*(\delta_\Gamma^o)\otimes H^*(F)$. If $\delta_\Gamma^o$ has the CKgP then the analogous statement holds also for Chow ring. On the other hand, if $D_\Gamma^o\subset f^{-1}(\delta_\Gamma^o)$ is the open subset obtained by further excluding all the degenerations of the level structure of $\Gamma$ (so that $D_\Gamma^o\rightarrow \delta_\Gamma^o$ is now a torus fibration) then, assuming $\delta_\Gamma^o$ has the CKgP, it follows by excision that $CH^*(D_\Gamma^o)\cong CH^*(\delta_\Gamma^o)$.
\end{remark}

\section{Plane quartics and strata of differentials in genus 2} \label{sec:genustwo}
In this section, we will prove that the stratum of meromorphic differentials $\cH(\mu,-2)\subset\cM_{2,n+1}$ in genus 2 has the CKgP. Given an element $(C;p_1,\ldots,p_n,q)\in \cH(\mu,-2)$ such that $\sum_im_ip_i-2q\sim \omega_C$, we look at the divisor $D=\sum_im_ip_i$ on the genus two curve $C$. We have $h^0(C,\cO_C(D))=3$ by the Riemann-Roch theorem, the complete linear series $|D|$ (which is base point free) furnishes a morphism $\phi_D:C\rightarrow \bP^2$. If $D\not\sim 2K_C$ then $\phi_D(C)$ is a quartic curve of geometric genus 2 (so has exactly one singular point). Since $\text{dim}\,|D-2q|=1$, the singular point is then a cusp at $\phi_D(q)$. On the other hand, if $D\sim 2K_C$ then the image of $\phi_D$ is a double conic. In this case, $(C;p_1,\ldots,p_n,q)\in \cH(\mu,-2)$ implies $q$ is a Weierstra{\ss} point. If $\mu=(4)$ then $p_1$ is a Weierstra{\ss} point (so we obtain the hyperelliptic component); if $\mu=(2,2)$, we have either $p_1$ and $p_2$ are hyperelliptic conjugates (which constitutes the hyperelliptic component of $\mathcal{H}(2,2,-2)$) or they are both Weierstra{\ss} points; if $\mu=(3,1)$, we must have $p_1=p_2$ is a Weierstra{\ss} point (so can only occur in the boundary); if $\mu=(2,1,1)$ then $p_1$ is a Weierstra{\ss} point and $p_2,p_3$ are hyperelliptic conjugates; and if $\mu=(1,1,1,1)$ then $p_i$'s give two pairs of hyperelliptic conjugates. Denote by $Z_\mu\subset \cH(\mu,-2)$ this subspace parameterizing $(C;p_1,\ldots,p_n,q)\in \cH(\mu,-2)$ such that $\sum_im_ip_i\sim 2K_C$.

First, we will prove that for each $\mu$, the open subset $\cH(\mu,-2)\setminus Z_\mu$ has the CKgP. For this, we utilize the locus of plane quartics with one cusp and no other singularities (so that the geometric genus becomes 2).
\subsection{The stratum $\cH(4,-2)$}
When $\mu=(4)$, $\cH(4,-2)\setminus Z_\mu=\cH^{nh}(4,-2)$ is the non-hyperelliptic component. Then for $(C;p,q)\in \mathcal{H}^{nh}(4,-2)$, $C_D\coloneqq \phi_D(C)$ is a plane quartic of geometric genus 2 with a cusp (at $\phi_D(q)$) and a hyperflex (at $\phi_D(p)$). Conversely, given a plane quartic $C_D$ of geometric genus 2 with a cusp at $A_1\in \mathbb{P}^2$ and a hyperflex at $A_2\in \mathbb{P}^2$, we can associate $(C;p,q)\in \mathcal{H}^{nh}(4,-2)$, where $C$ is the normalization of $C_D$ and $q,p$ are the preimages of $A_1, A_2$ in the normalization. To make this correspondence formal, considerthe vector space $V\subset \mathbb{C}[x_1,x_2,x_3]_4=H^0(\mathbb{P}^2,\cO_{\mathbb{P}^2}(4))$ given by:
\[V\coloneqq x_1^2\mathbb{C}[x_2,x_3]_2\oplus x_1\mathbb{C}[x_2,x_3]_3\oplus \mathbb{C}x_3^4.\]
As discussed in \cite[Section 2.2]{casnati_rationality}, $V$ consists of plane quartics $C_D$ with a singular point at $A_1\coloneqq [1,0,0]$ and hyperflex at $A_2\coloneqq [0,1,0]$ with the inflectional tangent line $\mathfrak{h}\coloneqq \{x_1=0\}$. The singular point generically is an ordinary node, so the subvariety of interest for us will be the locus in $V$ where the singular point $A_1$ of $C_D$ is a cusp (or worse). Suppose $f\in V$ then in the affine chart $x_1\neq 0$, putting $x=x_2/x_1$ and $y=x_3/x_1$, we have $\frac{\partial f}{\partial x}=\frac{\partial f}{\partial y}=0$ at $(0,0)$. We can further look into the Hessian of $f$:
\[H(f)=\begin{pmatrix}
\frac{\partial^2f}{\partial x^2} & \frac{\partial^2f}{\partial x\partial y} \\
\frac{\partial^2f}{\partial x\partial y} & \frac{\partial^2f}{\partial y^2} 
\end{pmatrix}.\]
If $C_D\coloneqq \{f=0\}$ has cusp then the determinant of $H(f)(0,0)=0$. This condition $\det H(f)(0,0)=0$ defines a quadric hypersurface $\cQ$ in $\mathbb{P}(V)$, and a generic curve in $\cQ$ will have a cuspidal singularity at $[1,0,0]$. We can write down the equation for this quadric explicitly: suppose $f=\sum_{i,j,k}a_{ijk}x_1^ix_2^jx_3^k\in V$. Then on a local coordinate $x_1=1$, $x_2=x$ and $x_3=y$, we have $\frac{\partial^2f}{\partial x^2}(0,0)=2a_{220}$, $\frac{\partial^2f}{\partial y^2}(0,0)=2a_{202}$ and $\frac{\partial^2f}{\partial x \partial y}(0,0)=a_{211}$. Consequently, $\cQ=V(a_{211}^2-4a_{220}a_{202})\subset \mathbb{P}(V)$. In particular, $\cQ$ is a cone over a conic in $\mathbb{P}(V)$ with vertex given by four-dimensional linear subspace $\{a_{211}=a_{220}=a_{202}=0\}$ (which we denote by $\cQ^{\text{sing}}$ as it is also the singular locus of $\cQ$).

Thus, by the discussion before, we obtain a dominant rational map $\cQ\dashrightarrow \cH^{nh}(4,-2)$ whose image is all of $\cH^{nh}(4,-2)$. In fact this gives a quotient stack presentation of $\cH^{nh}(4,-2)$ as $[\cQ^\circ/PG]$, where
\[G=\left\{
  \begin{pmatrix}
a_{11} & 0 & 0 \\
0 & a_{22} & a_{23}\\
0 & 0 & a_{33}
\end{pmatrix}\in GL_3
\right\},\]$\cQ^\circ\subset\cQ$ is the open subset consisting of plane quartics of geometric genus 2 and $PG$ is its image in $PGL_3$ (see \cite[Section 2.2]{casnati_rationality}). We will use this quotient stack presentation to show that $\cH^{nh}(4,-2)$ has CKgP:

\begin{proposition}\label[proposition]{proposition: genus2_mu_4}
  The stratum $\cH^{nh}(4,-2)$ has the CKgP and its rational Chow ring is isomorphic to $\mathbb{Q}$, generated by the fundamental class.
\end{proposition}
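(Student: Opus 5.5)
We use the quotient presentation $\cH^{nh}(4,-2)\cong[\cQ^\circ/PG]$ and follow the standard route for such quotients (as in \cite{cl22}): first show the ambient space $\cQ$ has the CKgP with Chow ring spanned by very few classes, then pass to the open $\cQ^\circ$, then to the quotient. The group $G$ is a connected solvable group: its unipotent radical is the $\mathbb{G}_a$ spanned by $a_{23}$, and its torus is the diagonal $T\cong\mathbb{G}_m^3$. Hence $PG$ is a split solvable group, isomorphic to $\mathbb{G}_a\rtimes\mathbb{G}_m^2$.

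\textbf{Step 1: the ambient space $\cQ$.} The quadric $\cQ\subset\bP(V)$ is a cone with vertex the linear space $\cQ^{\text{sing}}\cong\bP^3$ over the smooth conic $\{a_{211}^2=4a_{220}a_{202}\}\cong\bP^1$. So $\cQ\setminus\cQ^{\text{sing}}$ is an affine bundle (a vector bundle of rank 4) over $\bP^1$. By features (3) and (4) of the CKgP, $\cQ$ has the CKgP, and $CH^*(\cQ)$ is generated by the hyperplane class $h$ together with the class of a ruling $[\Lambda]$, where $\Lambda$ is a linear $\bP^4$ spanned by the vertex and a point of the conic. By (1), the open subset $\cQ^\circ$ also has the CKgP, and its Chow ring is a quotient of $\mathbb{Q}[h,[\Lambda]]$.

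\textbf{Step 2: the quotient.} The $\mathbb{G}_a$ factor contributes nothing to Chow groups, since quotients by $\mathbb{G}_a$ are affine bundles. Thus
\[CH^*_{PG}(\cQ^\circ)\cong CH^*_{\mathbb{G}_m^2}(\cQ^\circ).\]
The CKgP of $[\cQ^\circ/PG]$ follows from the CKgP of $\cQ^\circ$ together with the fact that $B\mathbb{G}_m^2$ has the CKgP, via the standard lemma for quotients by split tori and unipotent groups \cite[\S 3]{cl22}. Its Chow ring is generated by the equivariant lifts of $h$ and $[\Lambda]$ together with the characters $t_1,t_2$ of the torus.

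\textbf{Step 3: killing all positive-degree classes.} This is the main obstacle. We must show that $h$, $[\Lambda]$, $t_1$ and $t_2$ all vanish after restricting to $\cQ^\circ$. The plan is excision. The complement $\cQ\setminus\cQ^\circ$ consists of quartics with worse singularities: a cusp degenerating to a tacnode or worse at $A_1$ (this includes the locus $\cQ^{\text{sing}}$, where the Hessian vanishes entirely), extra singular points, the hyperflex becoming singular, reducible curves, and so on. The idea is to exhibit $T$-invariant divisors and higher loci in this complement whose equivariant classes express the generators:

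\begin{itemize}
\item[(a)] Coordinate hyperplanes $\{a_{ijk}=0\}$ whose generic members are non-$\cQ^\circ$ curves give relations $h+\chi_{ijk}=0$, where $\chi_{ijk}$ is the corresponding $T$-weight.
\item[(b)] For example, $a_{004}=0$ (the coefficient of $x_3^4$) forces $x_1\mid f$, so the curve is reducible. Likewise, the vanishing of the appropriate $x_2^3$-type coefficient makes $A_2$ singular.
\item[(c)] Using enough such divisors with independent weights, one expresses $t_1$, $t_2$ and $h$ in terms of each other and shows they vanish.
\item[(d)] Finally, the ruling class $[\Lambda]$ is killed by noting that its generic member has the cusp tangent aligned with a special coordinate direction. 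Failing that, one shows it is a combination of $h^k$ via the cone structure.
\end{itemize}

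The detailed bookkeeping is identifying which coefficient-vanishing loci lie entirely in the discriminant, that is, which loci have generic members that are not of geometric genus $2$ with a cusp at $A_1$ and a hyperflex at $A_2$. This would be checked by direct inspection of the monomials in $V$. Combining the steps gives $CH^*(\cH^{nh}(4,-2))=\mathbb{Q}$, generated by the fundamental class, together with the CKgP.
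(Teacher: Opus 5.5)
\textbf{The gap is Step 3.} Step 3 carries the entire Chow-ring statement, and the mechanism you propose there cannot close it.

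On the smooth locus, $CH^*_{PT}(\cQ\setminus\cQ^{\text{sing}})\cong CH^*_{PT}(\bP^1)$, where $\bP^1$ is the conic. This ring is generated in degree one, and $CH^1_{PT}$ is three-dimensional, spanned by $t_1,t_2$ and the class of a ruling. The hyperplane class $h$ is twice a ruling up to a character, so your fallback in (d) only rewrites $[\Lambda]$ and produces no relation. By excision, the classes of the $T$-invariant divisorial components of $\cQ\setminus\cQ^\circ$ must therefore span this three-dimensional space.

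Among the coordinate hyperplanes, however, only two lie in the complement:
\begin{itemize}
\item $\{a_{004}=0\}$, since then $x_1\mid f$;
\item $\{a_{130}=0\}$, since then $A_2$ is singular.
\end{itemize}
Every other coordinate hyperplane meets $\cQ^\circ$. In the chart $x_1=1$, the quartics $x^2+x^3+y^3+y^4$ and $y^2+x^3+y^4$ each have a cusp at the origin. They are smooth elsewhere, including at $A_2$ because $a_{130}\neq 0$, so both lie in $\cQ^\circ$. Each of $a_{220},a_{211},a_{202},a_{121},a_{112},a_{103}$ vanishes on one of them. The first lies on the torus-fixed ruling $\{a_{211}=a_{202}=0\}$ and the second on $\{a_{211}=a_{220}=0\}$.

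This also refutes the first idea in (d): a cusp whose tangent points in a coordinate direction is still a cusp, so the fixed rulings are not in the discriminant. The ``enough divisors with independent weights'' of (c) thus do not exist among coordinate hyperplanes. Two relations leave a one-dimensional part of $CH^1_{PT}(\cQ^\circ)$ unaccounted for.

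\textbf{How to repair your route.} You need a $T$-invariant divisor in the discriminant that is not a coordinate hyperplane. A natural choice is the tacnode locus. Write the quadratic part of $f$ at $A_1$ as $(\alpha x+\beta y)^2$ and the cubic part as $c(x,y)$. The tacnode locus is $\{c(\beta,-\alpha)=0\}$. It restricts to $\{a_{103}=0\}$ on the ruling $\{a_{211}=a_{202}=0\}$ and to $\{a_{130}=0\}$ on the ruling $\{a_{211}=a_{220}=0\}$. Localizing at these two fixed rulings, one checks that its equivariant class, together with those of $\{a_{004}=0\}$ and $\{a_{130}=0\}$, spans $CH^1_{PT}$. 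Since the ring is generated in degree one, this kills all positive-degree classes.

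\textbf{What the paper does instead.} It avoids this bookkeeping entirely. It first notes that $CH^*(\cQ^\circ)=\mathbb{Q}$ non-equivariantly, because $\cQ^\circ$ is an $\mathbb{A}^5$-bundle over the conic minus a nontrivial divisor. Brion's theorem then shows $CH^*_{PT}(\cQ^\circ)$ is generated by $t_1,t_2$. Finally, it kills $t_1,t_2$ by factoring $[\cQ^\circ/PT]\to BPT$ through $[\bP(V)^\circ/PT]$. This space is an $\mathbb{A}^1$-bundle over $\cM_{2,1}\setminus W_{2,1}$, whose rational Picard group vanishes.

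\textbf{Two smaller points.}
\begin{itemize}
\item $\dim V=8$, so the vertex of $\cQ$ is a $\bP^4$ and $\cQ\setminus\cQ^{\text{sing}}$ is an $\mathbb{A}^5$-bundle, not a rank-$4$ bundle.
\item Step 2 uses the implication ``$X$ has the CKgP $\Rightarrow$ $[X/T]$ has the CKgP'', which is not among the recorded properties. It can be proved from Brion's surjectivity $CH^T_*\to CH_*$ with kernel $\mathfrak{m}\,CH^T_*$ plus a graded Nakayama argument. The paper instead obtains the CKgP directly from a $PT$-invariant stratification of $\cQ^\circ$ into open subsets of affine spaces with linear action.
\end{itemize}
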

\begin{proof}
Denote by $T\subset G\subset GL_3$ be the maximal torus and $PT\subset PG$ be its image in $PG$. Then $[\cQ^\circ/PT]\rightarrow [\cQ^\circ/PG]$ is an $\mathbb{A}^1$-bundle. So by \cite[Lemma 3.5]{cl22}, $[\cQ^\circ/PT]$ has the CKgP if and only if $[\cQ^\circ/PG]$ does. Additionally, the pullback induces isomorphism of the Chow rings of the two stacks $[\cQ^\circ/PT]$ and $[\cQ^\circ/PG]$. So we will now analyze the rational Chow ring of $[\cQ^\circ/PT]$ and prove that it has the CKgP.

Note that $\cQ^\circ$ is an open subset of $\cQ\setminus\cQ^{\text{sing}}$ (since quartics in $\cQ^{\text{sing}}$ has strictly worse singularities), and the complement is a non-trivial divisor (given by discriminant). Since $\cQ\setminus\cQ^{\text{sing}}$ is an $\mathbb{A}^5$-bundle over the conic ($\cong\mathbb{P}^1$) -- this morphism is simply given by mapping $f\in \cQ\setminus\cQ^{\text{sing}}$ to $[a_{220},a_{202},a_{211}]\in \mathbb{P}^2$ -- it follows that $\cQ^\circ$ has the rational Chow ring isomorphic to $\mathbb{Q}$, generated by its fundamental class. Additionally, for any given point $z$ on the conic, the fiber $\cQ_z\cong\mathbb{A}^5$ of this $\mathbb{A}^5$-bundle in $\cQ\setminus\cQ^{\text{sing}}$ is invariant under the action of $PT$ (can be checked by direct computation) and the complement $\cQ\setminus(\cQ^{\text{sing}}\cup \cQ_z)$ is isomorphic to the affine space $\mathbb{A}^6$. Thus $[\cQ^\circ/PT]$ admits a stratification $\cA_1\sqcup \cA_2$, where $\cA_1$ is an open subset of an $\mathbb{A}^6$-bundle over $BPT$ (the classifying space of $PT$) and $\cA_2$ is an open subset of an $\mathbb{A}^5$-bundle over $BPT$. Thus, by the features (i), (iii) and (iv) of CkgP stated in \Cref{subsection:ckgp_intro}, the stack $[\cQ^\circ/PT]$ (and thus also $\cH(4,-2)$) has the CKgP.

To compute the (rational) Chow ring of $[\cQ^\circ/PT]$, we can use Edidin and Graham's equivariant Chow ring \cite{edidingraham_equivariantchow}. Since $\cQ^\circ$ has trivial rational Chow groups (except in the degree equal to its dimension), by \cite[Corollary 2.3]{brion_torusaction}, the $PT$-equivariant rational Chow ring of $\cQ^\circ$ (and thus the rational Chow ring of the quotient) is generated by the pullback classes from $CH^*(BPT)$. Note that $PT\cong (\mathbb{C}^*)^2$, so $CH^*(BPT)\cong\mathbb{Q}[t_1,t_2]$ for divisorial cycle classes $t_1,t_2$. We will now show that pullbacks of both these divisorial classes to $[\cQ^\circ/PT]$ are trivial. For this, we look at the equivariant closed embedding $\cQ\subset \mathbb{P}(V)$. If $\mathbb{P}(V)^\circ\subset\mathbb{P}(V)$ is the locus of plane quartics of geometric genus 2 then we have the closed embedding of stacks $[\cQ^\circ/PG]\hookrightarrow[\mathbb{P}(V)^\circ/PG]$. By the discussion above and that in \cite{casnati_rationality}, the latter is exactly $\mathcal{M}_{2,1}\setminus W_{2,1}$ (where $W_{2,1}$ is the locus of genus 2 curve with a Weierstra{\ss} point) and the embedding is exactly $\cH^{nh}(4,-2)\subset \mathcal{M}_{2,1}\setminus W_{2,1}$ obtained by forgetting the double pole. Note that the map $\mathbb{P}(V)\dashrightarrow\mathcal{M}_{2,1}$ factors through the stratum $\cH(4,-1,-1)/S_2$ where the preimage of the singular point under normalization of the plane quartic gives the symmetrized double poles of the genus 2 curve. It is then clear that the morphism $\cH(4,-1,-1)/S_2\rightarrow \mathcal{M}_{2,1}$, given by forgetting the simple poles, is of degree 1, so since $\cH^{nh}(4,-2)$ manifests in the boundary of $\cH(4,-1,-1)/S_2$, the induced morphism is also of degree 1.

As an upshot, we see that the morphism $[\cQ^\circ/PT]\rightarrow BPT$ factors through $[\mathbb{P}(V)^\circ/PT]$. Since the rational Picard group of $\mathcal{M}_{2,1}$ is generated by the divisor class $[W_{2,1}]$, we see that the rational Picard group of $[\mathbb{P}(V)^\circ/PT]$ is trivial. So the classes $t_1,t_2\in CH^*(BPT)\otimes\mathbb{Q}\cong\mathbb{Q}[t_1,t_2]$ must pullback to 0 in the rational Chow ring of $[\cQ^\circ/PT]$. This completes the proof of the proposition.
\end{proof}
\subsection{The strata $\cH(2,2,-2)$ and $\cH(3,1,-2)$}
\begin{enumerate}
\item When $\mu=(2,2)$, $\mathcal{H}(\mu,-2)\setminus Z_\mu$ is contained in $\mathcal{H}^{nh}(\mu,-2)$. Then for $(C;p_1,p_2,q)\in \mathcal{H}(\mu,-2)\setminus Z_\mu$, $C_D\coloneqq \phi_D(C)$ is a plane quartic of geometric genus 2 with a cusp at $\phi_D(q)$ and a bitangent line $\ell$ with tangency along $\phi_D(p_1)$ and $\phi_D(p_2)$. Conversely, given a plane quartic $C_D$ of geometric genus 2 with a cusp at $A_1\in \mathbb{P}^2$ and a bitangent line $\ell\subset \mathbb{P}^2$ with tangency at $A_2$ and $A_3$ then we can associate $(C;p_1,p_2,q)\in \mathcal{H}(\mu,-2)\setminus Z_\mu$, where $C$ is the normalization of $C_D$ and $p_1,p_2,q$ are preimages of $A_2,A_3,A_1$, respectively, in $C$. To make this correspondence precise, fix $A_1=[1,0,0]$, $A_2=[0,1,0]$ and $A_3=[0,0,1]$ as well as the line $\ell=\{x_1=0\}$ (where $x_1,x_2,x_3$ are the homogeneous coordinates in $\mathbb{P}^2$). Then $V\subset \mathbb{C}[x_1,x_2,x_3]_4$ consisting of quartics $C_D$ with a singular point at $A_1$ and a bitangent line $\ell$ with tangency at $A_2$ and $A_3$ is a vector subspace. In fact:
  \[V\coloneqq x_1^2\mathbb{C}[x_2,x_3]_2\oplus x_1\mathbb{C}[x_2,x_3]_3\oplus \mathbb{C}x_2^2x_3^2.\]
  Then as in the previous case, imposing the singular point $A_1$ to be a cusp defines a quadric hypersurface $\mathcal{Q}\subset \mathbb{P}(V)$ which admits a dominant rational map $\mathcal{Q}\dashrightarrow \mathcal{H}^{nh}(\mu,-2)$ whose image is $\mathcal{H}(\mu,-2)\setminus Z_\mu$. This then gives a quotient stack presentation of $\mathcal{H}(\mu,-2)\setminus Z_\mu$ as $[\mathcal{Q}^\circ/PG]$ where \[G=\left\{
  \begin{pmatrix}
a_{11} & 0 & 0 \\
0 & a_{22} & 0\\
0 & 0 & a_{33}
\end{pmatrix}\in GL_3
\right\},\]$\cQ^\circ\subset\cQ$ is the open subset consisting of plane quartics of geometric genus 2 and $PG$ is its image in $PGL_3$. Then retreading the argument in the proof of \Cref{proposition: genus2_mu_4} in this context proves that $\cH(2,2,-2)\setminus Z_\mu$ has the CKgP. That its Chow ring is isomorphic to $\mathbb{Q}$ generated by the fundamental class also follows via a similar argument -- forgetting the double pole injects $\cH(\mu,-2)\setminus Z_\mu$ into $\mathcal{M}_{2,2}\setminus W_{2,2}$, where $W_{2,2}\subset \mathcal{M}_{2,2}$ is the locus of 2-pointed genus 2 curves such that one of the marked points is a Weierstra{\ss} point. And as in \Cref{proposition: genus2_mu_4}, the Chow ring of $\cH(\mu,-2)\setminus Z_\mu$ is generated by the classes pulled back from the classifying stack of $PG\cong (\mathbb{C}^*)^2$. This pullback factors through $\mathcal{M}_{2,2}\setminus W_{2,2}$, whose Chow ring is tautological, generated by $\psi$-classes. But the $\psi$-classes are proportional to $\lambda$ (and thus trivial) in $\cH(2,2,-2)\setminus Z_\mu$. Consequently, its Chow ring is isomorphic to $\mathbb{Q}$, generated by the fundamental class.

Now, to prove that $\cH^{nh}(\mu,-2)$ has the CKgP, it is enough to prove that $Z_\mu^{nh}\coloneqq Z_\mu\cap \cH^{nh}(\mu,-2)$, which consists of $(C;p_1,p_2,q)\in \cH^{nh}(\mu,-2)$ such that $p_1,p_2,q$ are all Weierstra{\ss} points on $C$, also has the CKgP. But this is clear since $M_{0,6}$ surjects onto $Z_\mu^{nh}$ via the map $(\mathbb{P}^1;z_1,\ldots,z_6)\mapsto (C;p_1,p_2,q)$ where $C$ is the $2:1$ cover of $\mathbb{P}^1$ branched along $z_1,\ldots,z_6$ and $p_1,p_2,q$ are preimages of $z_4,z_5,z_6$, respectively.

Finally, we argue that fundamental class of $Z_\mu^{nh}$ is trivial in the Chow ring of $\cH^{nh}(\mu,-2)$, which would then imply the Chow ring of $\cH^{nh}(\mu,-2)$ is also generated by its fundamental class by excision. But this is clear since $Z_\mu^{nh}$ is set-theoretically the intersection of $W_{2,2}$ with $\cH(\mu,-2)$ in $\mathcal{M}_{2,2}$, so $[Z_\mu^{nh}]$ is proportional to the pullback of $[W_{2,2}]$ from $\mathcal{M}_{2,2}$ to $\cH(\mu,-2)$. In particular, $[Z_\mu^{nh}]$ is tautological, and thus trivial in $\cH(\mu,-2)$.
\item When $\mu=(3,1)$, $Z_\mu$ is entirely contained in the boundary of $\overline\cH(\mu,-2)$. Then we can follow the similar argument as in the previous two cases to get a correspondence between $(C;p_1,p_2,q)\in \cH(\mu,-2)$ and a plane curve $C_D\subset\mathbb{P}^2$ with a double point singularity at $A_1=[1,0,0]$ and inflectional tangent line $\ell=\{x_1=0\}$ with flex point $A_2=[0,1,0]$ such that $\ell$ also intersects $C_D$ at $A_3=[0,0,1]$. Consequently, the vector space $V\subset \mathbb{C}[x_1,x_2,x_3]_4$ in this case will be given by:
  \[V\coloneqq x_1^2\mathbb{C}[x_2,x_3]_2\oplus x_1\mathbb{C}[x_2,x_3]_3\oplus \mathbb{C}x_2x_3^3,\]
  see the discussion in \cite[Section 2.3]{casnati_rationality}. Thus as before, we get a quotient stack presentation $\cH(3,1,-2)\cong [\cQ^\circ/PG]$ for an open subset $\cQ^\circ$ of a quadric hypersurface $\cQ\subset \mathbb{P}(V)$, where $G$ is again the group of invertible diagonal matrices in $GL_3$ as in previous case. And thus, following the argument of $\mu=(2,2)$ case, we can again conclude that $\cH(3,1,-2)$ has the CKgP and its Chow ring is generated by the fundamental class.
\end{enumerate}
Thus, we have proven:
\begin{proposition}
  The strata of abelian differentials $\cH(2,2,-2)$ and $\cH(3,1,-2)$ both have the CKgP and both their Chow rings are generated by their fundamental classes.
\end{proposition}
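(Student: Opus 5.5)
The plan is to stratify each stratum into pieces that are quotients of open subsets of a quadric cone by a torus, exactly as in the proof of \Cref{proposition: genus2_mu_4}. For $\mu=(2,2)$ we decompose
\[\cH(2,2,-2)=\bigl(\cH(2,2,-2)\setminus Z_\mu\bigr)\sqcup Z_\mu.\]
The piece $Z_\mu$ further splits into the hyperelliptic component, where $p_1,p_2$ are conjugate and $q$ is Weierstra{\ss}, and the locus $Z_\mu^{nh}$ where $p_1,p_2,q$ are all Weierstra{\ss}.

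\textbf{The open part.} We use the presentation $[\cQ^\circ/PG]$ with $V=x_1^2\mathbb{C}[x_2,x_3]_2\oplus x_1\mathbb{C}[x_2,x_3]_3\oplus\mathbb{C}x_2^2x_3^2$. The cusp condition is again $a_{211}^2-4a_{220}a_{202}=0$, so $\cQ\setminus\cQ^{\text{sing}}$ is an affine bundle over a conic. Here $G$ is already the diagonal torus, so no $\mathbb{A}^1$-bundle reduction is needed.

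\begin{enumerate}
\item We check directly that the $PT$-action preserves the fibers over the two torus-fixed points of the conic, namely $[1,0,0]$ and $[0,1,0]$ in the coordinates $(a_{220},a_{202},a_{211})$.
\item We stratify $\cQ\setminus\cQ^{\text{sing}}$ into the open cell over $\mathbb{G}_m\subset\mathbb{P}^1$ and the two fixed fibers. Each stratum is an affine bundle over a torus quotient. Properties (i), (iii) and (iv) then give CKgP for $[\cQ^\circ/PT]$.
\item For the Chow ring, \cite[Corollary 2.3]{brion_torusaction} shows it is generated by pullbacks of $t_1,t_2\in CH^*(BPT)$. These factor through $\cM_{2,2}\setminus W_{2,2}$, whose rational Picard group is spanned by $\psi$-classes and $\lambda$ modulo $[W_{2,2}]$. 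On $\cH(2,2,-2)$ the relation $\omega\cong\cO(2p_1+2p_2-2q)$ makes the $\psi$-classes proportional to the $\lambda$-class, and $\lambda$ vanishes on this open part. Hence the Chow ring is $\mathbb{Q}$.
\end{enumerate}

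\textbf{The closed part $Z_\mu$.} Both pieces of $Z_\mu$ are dominated by moduli of pointed genus-zero curves via the double cover construction. For $Z_\mu^{nh}$ this is $M_{0,6}$, with $p_1,p_2,q$ the preimages of three branch points. For the hyperelliptic component it is $M_{0,n}$ with a marked branch point and a marked non-branch point. The spaces $M_{0,n}$ have CKgP, so by (ii) and (iii) these pieces have CKgP, and so does the whole stratum. Since $Z_\mu^{nh}$ is set-theoretically $W_{2,2}\cap\cH(\mu,-2)$, its class is tautological and hence zero by the previous step; excision then shows the Chow ring of each component is generated by its fundamental class. For the hyperelliptic component we argue directly: its coarse space is a finite quotient of an open subset of $M_{0,n}$, whose rational Chow ring is $\mathbb{Q}$.

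\textbf{The case $\mu=(3,1)$.} Here $Z_\mu$ lies in the boundary, so only the quotient $[\cQ^\circ/PG]$ with $V=x_1^2\mathbb{C}[x_2,x_3]_2\oplus x_1\mathbb{C}[x_2,x_3]_3\oplus\mathbb{C}x_2x_3^3$ is needed. Steps 1–3 apply verbatim.

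The main obstacle is step 3, the vanishing of the equivariant classes $t_1,t_2$. I would verify it via the Picard group of $\cM_{2,2}$ and the relation $\psi_{p_i}\propto\lambda$ on the stratum. A further delicate point is checking that $\lambda$ indeed vanishes on the open part. If that fails, I would instead compute the kernel of $CH^1(BPT)\to CH^1([\cQ^\circ/PT])$ directly. The discriminant divisor removed from $\cQ$ is $PT$-invariant with a nontrivial character, and the removed coordinate hyperplanes give further relations; together these should kill both $t_i$.
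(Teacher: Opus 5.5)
Your proposal follows the paper's proof essentially step for step: the quadric-cone presentation $[\cQ^\circ/PG]$ with a torus stratification, Brion's theorem plus the Picard group of $\cM_{2,2}\setminus W_{2,2}$ to kill $t_1,t_2$, the $M_{0,6}$ cover of $Z_\mu^{nh}$, and the same argument for $(3,1)$; you are also slightly more careful than the paper in using the torus-fixed fibers of the conic and in treating the hyperelliptic component via $M_{0,7}$. One soft spot you share with the paper is the $\lambda$-claim: $\omega\cong\cO(\sum m_ip_i)$ gives $(m_i+1)\psi_i=\xi\coloneqq c_1(\cO(-1))$, not proportionality to $\lambda$, so the vanishing of the $\psi$-classes on all of $\cH^{nh}(2,2,-2)$ (needed for $[Z_\mu^{nh}]=0$) and on $\cH(3,1,-2)$ (which meets the locus where $p_2$ is Weierstra\ss, so cannot be routed through $\cM_{2,2}\setminus W_{2,2}$) should instead come from $\psi_1=\psi_2$, resp.\ $\psi_2=2\psi_1$, together with the fact that these strata avoid $\{p_2=\iota p_1\}$, resp.\ $\{p_1 \text{ Weierstra\ss}\}$, whose classes on $\cM_{2,2}$ are proportional to $\psi_1+\psi_2-\lambda$, resp.\ $3\psi_1-\lambda$, with $\lambda=0$ rationally on $\cM_2$.
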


\begin{remark}
  Additionally, we will also need that $\cH(\mu)$ has the CKgP for $\mu=(2,0)$ or $(1,1,0)$ or $(2,0,0)$. This is well-known, see e.g. \cite{canning_larson_hyperelliptic}. Nonetheless, we will outline a proof showing these strata also have the CKgP. The first two cases are simple. Indeed, $\cH(2,0)$ is, on the level of coarse moduli space, finitely dominated by $M_{0,7}$ with the image the open subset of $\cH(2,0)$ comprising the complement of the locus where both marked points are Weierstra\ss. On the other hand, forgetting a simple zero defines an open immersion $\cH(1,1,0)\rightarrow M_{2,2}$ of coarse moduli spaces.

  For $\mu=(2,0,0)$, we can give a rational parameterization $\cH(2,0,0)$ using the plane curve model $y^2=f(x)$, where $f(x)=x^5+a_4x^4+\ldots+a_0$ is a polynomial of degree five with distinct zeros. Suppose $(C;w,p_1,p_2)\in \cH(2,0,0)$ then $w$ will correspond to the unique point at infinity in the plane curve. 
  On the locus where $p_2\neq\iota(p_1)$, with $\iota$ the hyperelliptic involution, the distinct marked points have distinct images under the hyperelliptic map. We may take $p_1=(0,y_1)$ and $p_2=(1,y_2)$ for $y_1,y_2\in \mathbb{C}$. So $a_0=y_1^2$ and $a_1=y_2^2-y_1^2-1-a_2-a_3-a_4$, so that $(y_1,y_2,a_2,a_3,a_4)\in \mathbb{C}^5$ defines a rational parameterization of $\cH(2,0,0)\setminus\{(C;w,p_1,p_2)\mid p_2=\iota(p_1)\}$. On the other hand, On the other hand, if $p_2=\iota(p_1)$, forgetting $p_2$ identifies this locus with the open subset of $\cH(2,0)$.
\end{remark}

\section{Plane cubics and strata of differentials in genus 1} \label{sec:genusone}
In this section, analyze the Chow ring of certain strata of differentials in genus 1. First, we review the rational parameterization of $M_{1,4}$ constructed by Belorousski in his PhD thesis \cite{belorousski}.

Suppose $Q_1,Q_2,Q_3,Q_4$ are four points in $\mathbb{P}^2$ in linearly general position (say $Q_1=[1,0,0],\ Q_2=[0,1,0],\ Q_3=[0,0,1]$ and $Q_4=[1,1,1]$) and $L_{ij}$ is the line joining $Q_i$ and $Q_j$ in $\mathbb{P}^2$. Then we define $\Phi_4\subset \mathbb{P}^9\cong \mathbb{P}H^0(\mathbb{P}^2,\mathcal{O}_{\mathbb{P}^2}(3))$ to be the collection of cubics in $\mathbb{P}^2$ passing through $Q_1,\ldots,Q_4$ and tangent to $L_{12}$ at $Q_1$. So $\Phi_4\cong\mathbb{P}^4$ and the dense open subset $U_4\subset\Phi_4$ parameterizing smooth cubics admits a dominant morphism $f_4:U_4\rightarrow\mathcal{M}_{1,4}$. Belorousski observes that $f_4$ is injective and $f_4(U_4)\subset\mathcal{M}_{1,4}$ is the complement of $D_1\cup D_2$ in $\mathcal{M}_{1,4}$, where:
\begin{enumerate}
\item $D_1=\{p_1+p_2\sim p_3+p_4\}\subset\mathcal{M}_{1,4}$. That is, $D_1\cong \mathcal{H}_{1,4}(1,1,-1,-1)$.
\item $D_2=\{2p_1\sim p_3+p_4\}\subset\mathcal{M}_{1,4}$. That is, $D_2\cong \mathcal{H}_{1,4}(2,0,-1,-1)$.
\end{enumerate}
The inverse morphism $\mathcal{M}_{1,4}\setminus (D_1\cup D_2)\rightarrow U_4$ is given by the embedding $\varphi:C\rightarrow \mathbb{P}^2$ given by the complete linear series $|2p_1+p_2|$, where $(C;p_1,p_2,p_3,p_4)\in \mathcal{M}_{1,4}\setminus (D_1\cup D_2)$ and coordinates in $\mathbb{P}^2$ are chosen such that $\varphi(p_i) = Q_i$.
\subsection{The stratum $\mathcal{H}(2,0,0,-2)\subset \mathcal{M}_{1,4}$}
Using the Belorousski's parameterization of $\mathcal{M}_{1,4}$, let us analyze the stratum $\mathcal{H}(2,0,0,-2)$. For brevity, let us denote $\mathcal{H}\coloneqq \mathcal{H}(2,0,0,-2)$ throughout this subsection. If $(C;p_1,\ldots,p_4)\in \cH$ then $2p_1+p_2\sim 2p_4+p_2$ in $\mathrm{Jac}(C)$, so under the embedding $\varphi:C\hookrightarrow\mathbb{P}^2$, we have $C$ is tangent to $L_{2,4}$ at $Q_4$ (and conversely -- a cubic passing through $Q_1,\ldots,Q_4$ that is tangent to $L_{1,2}$ at $Q_1$ and $L_{2,4}$ at $Q_4$ gives a point in $\mathcal{H}$). This is further a linear condition on $\mathbb{P}^4$, so we get an identification between an open subset of $\mathbb{P}^3$ and $\mathcal{H}\setminus (D_1\cup D_2)$. Since the open subset of $\mathbb{P}^3$ is contained in the complement of the discriminant hypersurface, $\mathcal{H}\setminus (D_1\cup D_2)$ has the Chow ring isomorphic to $\mathbb{Q}$, generated by the fundamental class.

Next we look at $\mathcal{H}\cap D_1$ and $\mathcal{H}\cap D_2$ and examine their Chow rings. First note that $(C;p_1,p_2,p_3,p_4)\in \mathcal{H}\cap D_2$ implies $2p_1\sim 2p_4\sim p_3+p_4$, which implies $p_4\sim p_3$. But this is impossible unless $p_3=p_4$, so $\mathcal{H}\cap D_2=\emptyset$. On the other hand, $D_1\cong \mathcal{H}(1,1,-1,-1)$, so forgetting $p_3$ injects $\mathcal{H}\cap D_1$ into $\mathcal{M}_{1,3}$ and the image is exactly the stratum $\mathcal{H}(2,0,-2)$, so $\cH\cap D_1\cong \mathcal{H}(2,0,-2)$. We claim that (the coarse moduli space of) $\mathcal{H}(2,0,-2)$ is a rational surface. Indeed forgetting the order 0 marked point gives a fibration over the modular curve $\mathcal{H}(2,-2)$, which is isomorphic to an open subset of $\mathbb{P}^1$. Since a generic element $(E;p,q)\in \mathcal{H}(2,-2)$ has an involution (associated to the 2-to-1 morphism $E\rightarrow \mathbb{P}^1$ given by the pencil $|2p|=|2q|$), the fiber over $(E;p,q)$ of this fibration $\mathcal{H}(2,0,-2)\rightarrow\mathcal{H}(2,-2)$ is again a rational curve (quotient of the elliptic curve $E\setminus\{p,q\}$ by the involution). In particular, this implies $\cH\cap D_1$ has the CKgP. Furthermore if $\mathcal{H}(2,0,-2)\subset \mathcal{H}^{rt}(2,0,-2)$ is a partial compactification by rational tails then $\mathcal{H}^{rt}(2,0,-2)\rightarrow \mathcal{H}(2,-2)$ is a proper $\mathbb{P}^1$-fibration, so the rational Chow ring is generated by $\mathcal{O}(1)$ class of this projective bundle (recall that $\mathcal{H}(2,-2)$, being a strict open subset of $\mathbb{P}^1$ as a coarse moduli space, has the Chow ring generated by its fundamental class), so $\mathcal{H}(2,0,-2)\cong \cH\cap D_1$ has the rational Chow ring generated by its fundamental class.

Observe that the fundamental class of $\cH\cap D_1$ pushes forward to 0 in the rational Chow ring of $\mathcal{H}(2,0,0,-2)$. Indeed if $\iota$ is the inclusion of $\mathcal{H}(2,0,0,-2)$ in $\mathcal{M}_{1,4}$ then the the divisor class of $\cH\cap D_1$ is a multiple of the class $\iota^*[\mathcal{H}(1,1,-1,-1)]$. Since the rational Picard group of $\mathcal{\overline{M}}_{1,n}$ is generated by the boundary divisors (see \cite[Ch. 19 Thm. 4.1]{acg2}), it follows that $[\mathcal{H}(1,1,-1,-1)]=0$ in $\text{Pic}_{\mathbb{Q}}(\mathcal{M}_{1,4})$. Consequently we have proven:
\begin{proposition} \label{prop:h200-2}
  The rational Chow ring of $\mathcal{H}(2,0,0,-2)$ is isomorphic to $\mathbb{Q}$, generated by the fundamental class and $\mathcal{H}(2,0,0,-2)$ has the CKgP.
\end{proposition}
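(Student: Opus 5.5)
We stratify $\cH\coloneqq\cH(2,0,0,-2)$ into the open piece $\cH\setminus D_1$ and the closed piece $\cH\cap D_1$. We have already seen that $\cH\cap D_2=\emptyset$, so $\cH\setminus D_1=\cH\setminus(D_1\cup D_2)$. The plan has three steps: prove the CKgP for each piece, combine them with feature (iii), and then use excision for the Chow ring.

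\textbf{Open piece.} By Belorousski's parameterization, $\cH\setminus(D_1\cup D_2)$ is identified with an open subset $U$ of a linear subspace $\mathbb{P}^3\subset\Phi_4\cong\mathbb{P}^4$. This is the locus of cubics through $Q_1,\ldots,Q_4$ that are tangent to $L_{12}$ at $Q_1$ and to $L_{24}$ at $Q_4$, with the discriminant removed. Since $f_4$ is injective, the identification holds at least on coarse spaces. Then feature (v) reduces the question to the coarse space, and feature (i) applied to $\mathbb{P}^3$ (which clearly has the CKgP) gives the CKgP for $U$. Its Chow ring is a quotient of $CH^*(\mathbb{P}^3)$ by the classes supported on the discriminant and the removed loci. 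The discriminant has positive degree and therefore kills the hyperplane class, so the Chow ring is $\mathbb{Q}$, as already noted above.

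\textbf{Closed piece.} Forgetting $p_3$ gives $\cH\cap D_1\cong\cH(2,0,-2)$. The coarse space of $\cH(2,0,-2)$ is a rational surface, fibered in rational curves over the open subset $\cH(2,-2)$ of $\mathbb{P}^1$. It has the CKgP, either by the remark on rational surfaces or, more directly, by writing it as an open subset of the $\mathbb{P}^1$-bundle $\cH^{rt}(2,0,-2)\to\cH(2,-2)$ and applying features (i), (iii) and (vi) together with the projective bundle formula. Feature (iii) applied to the two-piece stratification then gives the CKgP for $\cH$.

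\textbf{Chow ring.} The excision sequence
\[CH_*(\cH\cap D_1)\to CH_*(\cH)\to CH_*(\cH\setminus D_1)\to 0\]
reduces the Chow ring computation to showing that the image of $CH_*(\cH\cap D_1)$ is zero in positive codimension. Since $CH^*(\cH(2,0,-2))$ is generated by its fundamental class, the only potential contribution is $[\cH\cap D_1]$. Set-theoretically this is $\iota^*D_1$ for $\iota:\cH\hookrightarrow\cM_{1,4}$, so its class is a rational multiple of $\iota^*[\cH(1,1,-1,-1)]$. That class vanishes in $\text{Pic}_{\mathbb{Q}}(\cM_{1,4})$ because $\text{Pic}_{\mathbb{Q}}(\overline\cM_{1,4})$ is generated by boundary divisors. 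Hence $CH^*(\cH)\cong\mathbb{Q}$.

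\textbf{Main obstacle.} The delicate step is the Chow ring of $\cH(2,0,-2)$ itself. We must check that the relative $\mathcal{O}(1)$ class of $\cH^{rt}(2,0,-2)\to\cH(2,-2)$, after restricting to the open part away from the rational-tail loci and the fixed points of the involution, becomes a combination of classes of the removed boundary curves. Only then is $CH^*(\cH(2,0,-2))$ generated by its fundamental class. First I would exhibit a section of this $\mathbb{P}^1$-bundle supported on the removed locus, for example the locus where the order-$0$ point collides with $p$ or $q$. Its class then differs from $\mathcal{O}(1)$ by a pullback from the base, and that pullback is trivial since $CH^*(\cH(2,-2))=\mathbb{Q}$.
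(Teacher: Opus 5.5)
Your proposal is correct and follows the paper's proof essentially step for step. Belorousski's model identifies $\cH\setminus(D_1\cup D_2)$ with an open subset of a hyperplane $\mathbb{P}^3\subset\Phi_4$ avoiding the discriminant, $\cH\cap D_2=\emptyset$, and $\cH\cap D_1\cong\cH(2,0,-2)$ is a rational surface. The class $[\cH\cap D_1]$ then dies because $[D_1]=0$ in $\text{Pic}_{\mathbb{Q}}(\cM_{1,4})$. Your section argument also supplies a step the paper only asserts: the rational-tail locus where the order-$0$ point meets $p$ has class equal to the relative $\mathcal{O}(1)$, since $CH^1$ of the base vanishes, and this is why the $\mathcal{O}(1)$ class restricts to zero on $\cH(2,0,-2)$ (up to the harmless double fiber of the coarse fibration over the orbifold point of $\cH(2,-2)$, whose class is a multiple of a pulled-back point class and hence zero).
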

\subsection{The stratum $\mathcal{H}(2,2,-2,-2)\subset\mathcal{M}_{1,4}$}
Note that $\mathcal{H}(2,2,-2,-2)$ has two components -- one is given by squaring the differentials in $\mathcal{H}(1,1,-1,-1)$ and the other is primitive. The former is isomorphic to an open subset of $\mathcal{M}_{1,3}$, so it has the CKgP and its rational Chow ring is generated by its fundamental class. So for brevity, we abuse the notation to denote by $\mathcal{H}(2,2,-2,-2)$ the primitive component. We then claim that there is a birational map $\mathcal{H}(2,2,-2,-2)\dashrightarrow \mathcal{H}(2,0,0,-2)$. Indeed given general $(E;p_1,p_2,p_3,p_4)\in \mathcal{H}(2,2,-2,-2)$, we can map it to $(E;p_0,p_2,p_3,p_4)\in \mathcal{H}(2,0,0,-2)$, where $p_0\sim p_1+p_2-p_3$. Note that this map is well-defined for general $(E;p_1,p_2,p_3,p_4)$ since $h^0(E,\mathcal{O}_E(p_1+p_2-p_3)=1$, so there is the unique point $p_0\in E$ such that $p_0\sim p_1+p_2-p_3$ (this also shows that the map is one-to-one). This map is not well defined exactly when $p_0=p_i$ for $i\in \{2,3,4\}$, that is, if $p_1\sim p_3$ or $p_1+p_2\sim 2p_3$ or $p_1+p_2\sim p_3+p_4$. Of course $p_1\sim p_3$ is impossible and so is $p_1+p_2\sim p_3+p_4$ by our choice of primitive component of the stratum. Consequently we have an open embedding of $\mathcal{H}(2,2,-2,-2)\setminus \mathcal{H}(1,1,-2,0)$ into $\mathcal{H}(2,0,0,-2)$. Consequently $\mathcal{H}(2,2,-2,-2)\setminus \mathcal{H}(1,1,-2,0)$ has the CKgP and its Chow ring is isomorphic to $\mathbb{Q}$ generated by the fundamental class.

On the other hand, forgetting the first marked point gives an inclusion of \[D_0 :=
\left\{
(E;p_1,p_2,p_3,p_4)\in\mathcal{H}(2,2,-2,-2)
\;\middle|\; p_1+p_2\sim 2p_3\ \text{and}\ 2p_1+2p_2\sim 2p_3+2p_4
\right\}.\] into $\mathcal{M}_{1,3}$ and the image is exactly $\mathcal{H}(0,2,-2)$ (indeed $2p_1+2p_2\sim 2p_3+2p_4$ and $p_1+p_2\sim 2p_3$ in conjunction imply $2p_3\sim 2p_4$). We have already seen that $\cH(0,2,-2)\cong \mathcal{H}(2,0,-2)$ has the CKgP and has Chow ring isomorphic to $\mathbb{Q}$ generated by the fundamental class. On the other hand, by the similar argument as in the previous subsection, we can see that the fundamental class of $D_0$ has trivial pushforward in the rational Picard group of $\mathcal{H}(2,2,-2,-2)$. Thus we have proven:
\begin{proposition}
  The rational Chow ring of $\mathcal{H}(2,2,-2,-2)$ is isomorphic to $\mathbb{Q}$, generated by the fundamental class, and $\mathcal{H}(2,2,-2,-2)$ has the CKgP.
\end{proposition}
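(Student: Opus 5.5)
The plan is to stratify the primitive component $\mathcal{H}\coloneqq\mathcal{H}(2,2,-2,-2)$ into an open piece and a closed complement. On each piece I would establish the CKgP and compute the rational Chow ring. Then feature (iii) of \Cref{subsection:ckgp_intro} gives the CKgP of $\mathcal{H}$, and the excision sequence gives the Chow ring.

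\textbf{Open piece.} Take $U\coloneqq\mathcal{H}\setminus D_0$, where $D_0$ is the locus $p_1+p_2\sim 2p_3$. On $U$ the assignment $(E;p_1,p_2,p_3,p_4)\mapsto(E;p_0,p_2,p_3,p_4)$ with $p_0\sim p_1+p_2-p_3$ is well defined: $p_0\neq p_3,p_4$ on the primitive component, and $p_0=p_2$ is excluded by removing $D_0$. I would check that the image lies in $\mathcal{H}(2,0,0,-2)$. Indeed $2p_0+2p_2-2p_3-2p_4\sim 2p_1+2p_2-2p_3-2p_4+\ldots$; more concretely, the relation $2p_1+2p_2\sim 2p_3+2p_4$ translates into $2p_0\sim 2p_4$ after substitution. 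The map is injective with inverse $p_1\sim p_0+p_3-p_2$, so it is an open immersion. \Cref{prop:h200-2} and feature (i) then show that $U$ has the CKgP, and its Chow ring is a quotient of $\mathbb{Q}$ because restriction to an open subset is surjective.

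\textbf{Closed piece.} On $D_0$ the two relations $p_1+p_2\sim 2p_3$ and $2p_1+2p_2\sim 2p_3+2p_4$ give $2p_3\sim 2p_4$. Also $p_1$ is determined by $p_2,p_3$, so forgetting $p_1$ is an injection $D_0\hookrightarrow\mathcal{M}_{1,3}$ with image $\mathcal{H}(0,2,-2)$. By the previous subsection, $\mathcal{H}(0,2,-2)\cong\mathcal{H}(2,0,-2)$ is a rational surface with the CKgP whose Chow ring is $\mathbb{Q}$. Passing through coarse moduli spaces via feature (v), $D_0$ has the same properties. Stratifying $\mathcal{H}=U\sqcup D_0$ then gives the CKgP for $\mathcal{H}$.

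\textbf{Chow ring and main obstacle.} Excision gives $CH_*(D_0)\to CH_*(\mathcal{H})\to CH_*(U)\to 0$. It remains to show that $[D_0]$ vanishes in $CH^1(\mathcal{H})$, and this is the step I expect to be the main obstacle. I would try to realise $D_0$ set-theoretically as the pullback under $\mathcal{H}\hookrightarrow\mathcal{M}_{1,4}$ of a divisor of the form $\mathcal{H}(1,1,-2,0)$, that is, the locus $p_1+p_2\sim 2p_3$. One has to check that this divisor meets $\mathcal{H}$ properly, so that $[D_0]$ is a rational multiple of the restricted class. Since $\mathrm{Pic}_{\mathbb{Q}}(\overline{\mathcal{M}}_{1,4})$ is generated by boundary divisors, the class of any divisor in $\mathcal{M}_{1,4}$ is zero rationally, and therefore $[D_0]=0$ in $CH^1(\mathcal{H})$. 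The remaining details are multiplicity issues and possible extra components of the intersection. Even so, any such component lies inside $D_0$ and is pulled back from a class that is zero, so it would not affect the conclusion that $CH^*(\mathcal{H})\cong\mathbb{Q}$.
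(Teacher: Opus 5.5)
Your proposal is correct and follows the paper's proof essentially step for step. The three ingredients are the same: the open immersion of $\mathcal{H}\setminus D_0$ into $\mathcal{H}(2,0,0,-2)$ via $p_0\sim p_1+p_2-p_3$; the identification of $D_0$, by forgetting $p_1$, with (an open part of) $\mathcal{H}(0,2,-2)$; and the vanishing of $[D_0]$, which holds because $[D_0]$ is a multiple of the pullback of a divisor class from $\mathcal{M}_{1,4}$, whose rational Picard group is zero.

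There is one bookkeeping slip in the well-definedness check. Removing $D_0$ is what rules out $p_0=p_3$, since $p_0=p_3$ if and only if $p_1+p_2\sim 2p_3$. By contrast, $p_0=p_2$ never happens, because it would force $p_1=p_3$. The conclusion that the map is defined on $U$ is unaffected.
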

\begin{remark}
  Note that analogously the morphism $(E;p_1,p_2,p_3)\mapsto (E;p_1,p_0,p_3)$ for $p_0\sim 2p_1-p_2$ gives an identification of the primitive component of $\mathcal{H}(4,-2,-2)$ with $\mathcal{H}(0,2,-2)$, so analogously we also have that each component of $\mathcal{H}(4,-2,-2)$ has the CKgP, with the Chow ring generated by its fundamental class.
\end{remark}
\begin{remark}
    Note that in genus 1, we have the "inversion" morphism $fdz\mapsto \frac{1}{f}dz$ which identifies $\cH(\mu)$ with $\cH(-\mu)$. So showing one has the CKgP is enough to conclude the same for the other. However, this morphism does not extend to the boundary in general.
\end{remark}

\section{Minimal stratum $\overline{\mathcal{H}}(4)$} \label{sec:fs}
In this section, we will prove that the multi-scale compactification $\overline\cH(4)$ of the minimal stratum has the CKgP. We will first prove that each (generalized) stratum of (possibly meromorphic) differentials that appears in the boundary of $\overline\cH(4)$ has the CKgP.

\subsection{Enumerating the boundary divisors}
The boundary divisors of $\overline\cH(4)$ are  either horizontal boundary $D_h$ or given by one of the following (enhanced) level graphs.

\begin{figure}[H]
  \centering
  \begin{subfigure}{0.2\textwidth}
    \centering
\begin{tikzpicture}[main/.style = {draw, circle}]
    \node[main] at (0,0) {2};
    %\fill[black] (0,0) circle (3pt);
    \fill[black] (0, -1.5) circle (3pt);

    \draw[thick] (0,-.35) to [out=225, in=135] (0, -1.5);
    \draw[thick] (0,-.35) to [out=315, in=45] (0, -1.5);
    \draw (0,-1.5) -- (0,-2);

    \draw node at  (0,-2.2) {$4$};
    \draw node at  (-0.4,-1) {$3$};
    \draw node at  (0.4,-1) {$1$};
  \end{tikzpicture}
  \caption{$\Gamma_1$}
\end{subfigure}
\begin{subfigure}{0.2\textwidth}
    \centering
\begin{tikzpicture}[main/.style = {draw, circle}]
    \node[main] at (0,0) {2};
    %\fill[black] (0,0) circle (3pt);
    \fill[black] (0, -1.5) circle (3pt);

    \draw[thick] (0,-.35) to [out=225, in=135] (0, -1.5);
    \draw[thick] (0,-.35) to [out=315, in=45] (0, -1.5);
    \draw (0,-1.5) -- (0,-2);

    \draw node at  (0,-2.2) {$4$};
    \draw node at  (-0.4,-1) {$2$};
    \draw node at  (0.4,-1) {$2$};
  \end{tikzpicture}
  \caption{$\Gamma_2$}
\end{subfigure}
\begin{subfigure}{0.2\textwidth}
    \centering
\begin{tikzpicture}[main/.style = {draw, circle}]
    \node[main] at (0,0) {1};
    %\fill[black] (0,0) circle (3pt);
    \node[main] at (0,-1.85) {1};

    \draw[thick] (0,-.35) to [out=225, in=135] (0, -1.5);
    \draw[thick] (0,-.35) to [out=315, in=45] (0, -1.5);
    \draw (0,-2.2) -- (0,-2.7);

    \draw node at  (0,-2.9) {$4$};
    \draw node at  (-0.4,-1) {$1$};
    \draw node at  (0.4,-1) {$1$};
  \end{tikzpicture}
  \caption{$\Gamma_3$}
\end{subfigure}
\begin{subfigure}{0.2\textwidth}
    \centering
\begin{tikzpicture}[main/.style = {draw, circle}]
    \node[main] at (0,0) {1};
    %\fill[black] (0,0) circle (3pt);
    \fill[black] (0, -1.5) circle (3pt);

    \draw[thick] (0,-.35) to [out=210, in=150] (0, -1.5);
    \draw[thick] (0,-.35) to [out=330, in=30] (0, -1.5);
    \draw[thick] (0,-.35) to [out=270, in=90] (0, -1.5);
    \draw (0,-1.5) -- (0,-2);

    \draw node at  (0,-2.2) {$4$};
    \draw node at  (-0.5,-1) {$1$};
    \draw node at  (-0.1,-1) {$1$};
    \draw node at  (0.5,-1) {$1$};
  \end{tikzpicture}
  \caption{$\Gamma_4$}
\end{subfigure}
\begin{subfigure}{0.2\textwidth}
    \centering
\begin{tikzpicture}[main/.style = {draw, circle}]
    \node[main] at (0,0) {2};
    %\fill[black] (0,0) circle (3pt);
    \node[main] at (0,-1.85) {1};

    \draw[thick] (0,-.35) to [out=270, in=90] (0, -1.5);
    \draw (0,-2.2) -- (0,-2.7);

    \draw node at  (0,-2.9) {$4$};
    \draw node at  (0.1,-1) {$3$};
  \end{tikzpicture}
  \caption{$\Gamma_5$}
\end{subfigure}
\begin{subfigure}{0.2\textwidth}
    \centering
\begin{tikzpicture}[main/.style = {draw, circle}]
    \node[main] at (0,0) {1};
    %\fill[black] (0,0) circle (3pt);
    \node[main] at (0,-1.85) {2};

    \draw[thick] (0,-.35) to [out=270, in=90] (0, -1.5);
    \draw (0,-2.2) -- (0,-2.7);

    \draw node at  (0,-2.9) {$4$};
    \draw node at  (0.1,-1) {$1$};
  \end{tikzpicture}
  \caption{$\Gamma_6$}
\end{subfigure}
\begin{subfigure}{0.2\textwidth}
    \centering
\begin{tikzpicture}[main/.style = {draw, circle}]
    \node[main] at (0,0) {1};
    %\fill[black] (0,0) circle (3pt);
    \node[main] at (-.8,1.85) {1};
    \node[main] at (.8,1.85) {1};

    \draw[thick] (0,.35) to [out=125, in=-55] (-.8, 1.5);
    \draw[thick] (0,.35) to [out=55, in=-125] (.8, 1.5);
    \draw (0,-.35) -- (0,-.7);

    \draw node at  (0,-.9) {$4$};
    \draw node at  (-0.6,1) {$1$};
    \draw node at  (0.6,1) {$1$};
  \end{tikzpicture}
  \caption{$\Gamma_7$}
\end{subfigure}
\begin{subfigure}{0.2\textwidth}
    \centering
\begin{tikzpicture}[main/.style = {draw, circle}]
    %\node[main] at (0,0) {1};
    \fill[black] (0,0) circle (3pt);
    \node[main] at (-.8,1.85) {1};
    \node[main] at (.8,1.85) {1};

    \draw[thick] (0,0) to [out=120, in=-60] (-.8, 1.5);
    \draw[thick] (0,0) to [out=70, in=200] (.8, 1.5);
    \draw[thick] (0,0) to [out=30, in=-90] (.8, 1.5);
    \draw (0,0) -- (0,-.4);

    \draw node at  (0,-.6) {$4$};
    \draw node at  (-0.6,0.9) {$1$};
    \draw node at  (0.8,0.8) {$1$};
    \draw node at  (0.2,1) {$1$};
  \end{tikzpicture}
  \caption{$\Gamma_8$}
\end{subfigure}
\caption{Vertical boundary divisors in $\overline{\cH}(4)$}
\label{fig:vertical_graphs_mu_(4)}
\end{figure}% Preamble:

\tikzset{
  vgraph/.style={
    x=1cm,
    y=1cm,
    every node/.style={font=\small},
    thick
  },
  component/.style={
    draw,
    circle,
    fill=white,
    minimum size=7mm,
    inner sep=0pt,
    font=\small
  },
  blacknode/.style={
    circle,
    fill=black,
    inner sep=1.8pt
  },
  edgelabel/.style={
    font=\scriptsize,
    fill=white,
    inner sep=1pt
  },
  leglabel/.style={
    font=\scriptsize
  },
  edge/.style={
    thick,
    line cap=round
  }
}
The level-wise product model for (the interior of) each of these boundary divisors can be enumerated as follows. We refer to \cite{boissymero} for the discussion of the connected components of the strata of meromorphic differentials, and to \cite{lee,leewong} for that of the strata of meromorphic differentials with linear residue conditions that appear in the boundary stratification below.
\begin{enumerate}
\item $D_h^o\sim \cH(4,-1,-1)/S_2$ is the horizontal divisor. It has two components, one in each of $\overline\cH^{h}(4)$ and $\overline\cH^{nh}(4)$.
\item $D_{\Gamma_1}^o\sim \cH(2,0)\times \cH(4,-4,-2)$. The prong-matchings on each compatible collection of twisted differentials belong to the unique equivalence class, so $D_{\Gamma_1}$ has exactly one component contained in $\overline\cH^{nh}(4)$.
\item $D_{\Gamma_2}^o$ is isomorphic to two disjoint copies of $(\cH(1,1)\times \cH(4,-3,-3))/S_2$ (where $S_2\cong\text{Aut}\,(\Gamma_2)$). Each compatible collection of twisted differentials has two equivalence classs of prong-matchings, which leads to $D_{\Gamma_2}$ having two components, one in each of $\overline\cH^h(4)$ and $\overline\cH^{nh}(4)$.
\item $D_{\Gamma_3}^o\sim (\cH(0,0)\times\cH(4,-2,-2))/S_2$ (where $S_2\cong\text{Aut}\,(\Gamma_3)$). The stratum $\cH(4,-2,-2)$ has two components distinguished by the rotation number. Consequently, $D_{\Gamma_3}$ has two components, one in each of $\overline\cH^h(4)$ and $\overline\cH^{nh}(4)$. Note that $\cH(0^n)\cong \cM_{1,n}$, which has the CKgP for $n\le 10$ \cite[\S 5.2]{cl22}.
\item $D_{\Gamma_4}^o\sim (\cH(0,0,0)\times \cH(4,-2,-2,-2))/S_3$ (where $S_3\cong\text{Aut}\,(\Gamma_4)$). This boundary divisor is irreducible contained in $\overline\cH^{nh}(4)$.
\item $D_{\Gamma_5}^o\sim \cH(2)\times \cH(4,-4)$. The stratum $\cH(4,-4)$ has two components distinguished by the rotation number. Consequently, $D_{\Gamma_5}$ has two components, one in each of $\overline\cH^{h}(4)$ and $\overline\cH^{nh}(4)$.
\item $D_{\Gamma_6}^o\sim \cH(0)\times \cH(4,-2)$. The stratum $\cH(4,-2)$ has two components distinguished by hyperellipticity. Consequently $D_{\Gamma_6}$ has to components, one in each of $\overline\cH^{h}(4)$ and $\overline\cH^{nh}(4)$.
\item $D_{\Gamma_7}^o\sim (\mathbb{P}(\Omega\cM_{1,1}\times\Omega\cM_{1,1})\times \cR(4,-2,-2))/S_2$ (where $S_2\cong\text{Aut}\,(\Gamma_7)$). The residueless stratum $\cR(4,-2,-2)$ has two components distinguished by ramification profile. Consequently, $D_{\Gamma_7}$ has two components, one in each of $\overline\cH^{h}(4)$ and $\overline\cH^{nh}(4)$.
\item $D_{\Gamma_8}^o\sim \mathbb{P}(\Omega\cM_{1,1}\times\Omega\cM_{1,2})\times \cH(4,|-2|-2,-2|)/S_2$ (where $S_2\cong\text{Aut}\,(\Gamma_8)$). This divisor is irreducible, contained in $\overline\cH^{nh}(4)$. Note that $\cH(4,|-2\,|\,-2,-2|)$ is 0-dimensional.
\end{enumerate}

\subsection*{The residueless locus $\cR(4,-2,-2)$}
The residueless locus $\PP\cR(4,-2,-2)$ also has two connected components distinguished by the rotation number. Both components are hyperelliptic. The Euler characteristic of $\PP\overline{\cR}(4,-2,-2)$ is $4$ by \cite{LT} (see also \cite[Prop. 2.1]{devkota_grushevsky}). That is, both connected components are rational.\\

With this, we see that each vertex appearing in all the enhanced level graphs in \Cref{fig:vertical_graphs_mu_(4)} represents a moduli space of differentials that has the CKgP. This proves $D_{\Gamma_i}^o$ has the CKgP, except for $i=7,8$ -- for these two cases, we will give an argument based on \Cref{thm:ckgp_fibration}.

\begin{remark}\label[remark]{rem:codimension_up_to_two}
  Note that while we have depicted all the lower genus strata of differentials that manifest as a vertex of a divisorial enhanced level graphs, it is possible that a higher codimension enhanced level graph manifests additional cases. However, we don't need to go beyond enhanced level graphs with three levels. Indeed, suppose $\Lambda$ is an enhanced level graph and $v$ is a vertex on $\Lambda$ representing the stratum $\cH_v$. Then the new enhanced level graph $\Gamma$ obtained by consolidating all levels above $i$ to one and all levels below $i$ to one will have exactly three levels, and still contains the vertex $v$ representing the stratum $\cH_v$. In \Cref{appendix}, we have listed all the enhanced level graphs with three levels in Figure~\ref{fig:h4}. Looking at the list, it is clear that all vertices represent (open) stratum of differentials that has the CKgP.
\end{remark}

\subsection{The vertical boundary}
We observed in the previous subsection that for each divisorial level graph $\Gamma_i$, unless $i=2$, each compatible twisted differential has unique prong-matching equivalence class. And for $i=2$, the two prong-matching equivalence classes lead to two distinct components of the boundary stratum, one hyperelliptic and the other hyperelliptic. Thus, as discussed in \Cref{sec:boundary_correspondence}, there is a finite covering $(B_{\Gamma_i}^{[0]})^o\times (B_{\Gamma_i}^{[-1]})^o\rightarrow D_{\Gamma_i}^o$, which proves $D_{\Gamma_i}^o$ has the CKgP for $1\le i\le 6$. The remaining two divisors, $D_{\Gamma_7}$ and $D_{\Gamma_8}$, are exceptional divisors for the morphism $\overline\cH\rightarrow \overline\cM_{3,1}$. So an application of \Cref{thm:ckgp_fibration} then implies the $D_{\Gamma_i}^o$ also have the CKgP for $i=7,8$. Note that for $i=8$, the fibration $\pi:D_{\Gamma_8}\rightarrow \pi(D_{\Gamma_8})$ can have reducible fibers. So, in order to apply \Cref{thm:ckgp_fibration}, we need to stratify $D_{\Gamma_8}=(D_{\Gamma_8})^1\cup (D_{\Gamma_8})^2\cup (D_{\Gamma_8})^3$ so that the restriction of $\pi$ to $(D_{\Gamma_8})^i$ has irreducible fibers. For this, we take $(D_{\Gamma_1})^1$ and $(D_{\Gamma_2})^2$ to be the (closed) boundary strata given, respectively, by the following two enhanced level graphs (of codimension two):
\begin{figure}[H]
  \centering
\begin{subfigure}{0.4\textwidth}
    \centering
\begin{tikzpicture}[main/.style = {draw, circle}]
    %\node[main] at (0,0) {1};
    \fill[black] (0,0) circle (3pt);
    %\fill[black] (-.8,1.85) circle (3pt);
    \node[main] at (-.8,1.85) {1};
    \fill[black] (.8,1.85) circle (3pt);
    \node[main] at (.8,3) {1};

    \draw[thick] (0,0) to (-.8, 1.5);
    \draw[thick] (0,0) to [out=70, in=200] (.8, 1.85);
    \draw[thick] (0,0) to [out=30, in=-90] (.8, 1.85);
    \draw[thick] (0.8,2.65) to (.8, 1.85);
    \draw (0,0) -- (0,-.4);

    \draw node at  (0,-.6) {$4$};
    \draw node at  (-0.6,0.9) {$1$};
    \draw node at  (0.8,0.8) {$1$};
    \draw node at  (0.1,1) {$1$};
    \draw node at  (0.9,2.3) {$1$};
  \end{tikzpicture}
\end{subfigure}
\begin{subfigure}{0.4\textwidth}
    \centering
\begin{tikzpicture}[main/.style = {draw, circle}]
    %\node[main] at (0,0) {1};
    \fill[black] (0,0) circle (3pt);
    %\fill[black] (-.8,3) circle (3pt);
    \node[main] at (-.8,3) {1};
    \fill[black] (.8,1.85) circle (3pt);
    \node[main] at (.8,3) {1};

    \draw[thick] (0,0) to (-.8, 2.65);
    \draw[thick] (0,0) to [out=70, in=200] (.8, 1.85);
    \draw[thick] (0,0) to [out=30, in=-90] (.8, 1.85);
    \draw[thick] (0.8,2.65) to (.8, 1.85);
    \draw (0,0) -- (0,-.4);

    \draw node at  (0,-.6) {$4$};
    \draw node at  (-0.45,1.1) {$1$};
    \draw node at  (0.8,0.8) {$1$};
    \draw node at  (0.1,1) {$1$};
    \draw node at  (0.9,2.3) {$1$};
  \end{tikzpicture},
\end{subfigure}
\end{figure}
and $(D_{\Gamma_8})^3$ to be the closure of the complement of $(D_{\Gamma_8})^1\cup (D_{\Gamma_8})^2$ in $D_{\Gamma_8}$.\\

Regarding higher codimensional boundary strata, note that only $\Gamma_1,\Gamma_2$ and $\Gamma_5$ have edges with enhancements $\ge 2$. Among them, while $D_{\Gamma_1}\cap D_{\Gamma_5}\ne \emptyset$ and $D_{\Gamma_2}\cap D_{\Gamma_5}\ne \emptyset$, the corresponding 3-level graphs (denote $\Gamma_{1,5}$ and $\Gamma_{2,5}$, respectively) do not acquire a long edge (that is, the vertical edge that does not connect vertices in consecutive levels). Consequently, the number of prong-matching equivalence classes do not increase, so the boundary strata will still be covered by product of the generalized strata for each levels. Thus, for similar reason as before, each vertical boundary stratum also has the CKgP.

\subsection{The horizontal boundary}
Denote by $D_h$ the horizontal boundary divisor of $\overline\cH(4)$, then it is finitely covered by $\overline\cH(4,-1,-1)/S_2$. We will focus on the non-hyperelliptic component of $\cH(4)$ and denote by $\tilde{D}_h$ the non-hyperelliptic component of $\overline\cH(4,-1,-1)/S_2$. Then we will prove that $\tilde{D}_h$ (and thus also $D_h$) has the CKgP.

The vertical boundary divisors in $\tilde{D}_h$ are given by the pullback of $D_{\Gamma_i}$ to $\tilde{D}_h$, which we will denote by $D_{\Gamma_i,h}$ for simplicity (where $\Gamma_i$ are the level graphs depicted in \Cref{fig:vertical_graphs_mu_(4)}. Of course $D_{\Gamma_i,h}$ may have multiple irreducible components, possibly given by different level graphs (depending on which vertex of $\Gamma_i$ the horizontal edge manifests).

Let us denote by $f:\tilde{D}_h\rightarrow \overline\cM_{2,3}/S_2$ the forgetful morphism and by $X_h$ the normalization of $f(\tilde{D}_h)$. For brevity, we will also denote by morphism $\tilde{D}_h\rightarrow X_h$ also by $f$. Similarly, denote by $g:X_h\rightarrow \overline{\cM}_{2,1}$ the morphism obtained by forgetting the simple poles. Then $f$ and $g$ are both birational morphisms. Since $\overline\cM_{2,1}$ has the CKgP, to prove the same for $\tilde{D}_h$, we will analyze the exceptional divisors of $f$ and $g$. The $f$-exceptional divisors will be contained in $D_{\Gamma_7,h}$ or $D_{\Gamma_8,h}$, whereas the $g$-exceptional divisors will be contained in $f(D_{\Gamma_i,h})$ for $i=1,\ldots,5$ (it is easy to see that both components of $g\circ f(D_{\Gamma_6,h})$ will remain divisorial in $\overline\cM_{2,1}$).

Note that the discussion of prong-matchings from above still holds for enhanced level graphs in $\tilde{D}_h$, so each of $f(D_{\Gamma_i,h})$ can be finitely covered by the product of lower dimensional moduli spaces of differentials (since the boundary strata in $\overline\cM_{2,3}$ have such a product structure). The lower dimensional moduli spaces that appear in this context have the CKgP, so $X_h$ also has the CKgP. Consequently, to prove the same for $\tilde{D}_h$, it remains to verify $D_{\Gamma_7,h}$ and $D_{\Gamma_8,h}$ also have the CKgP.

So, now we look at these $f$-exceptional divisors, which can be represented by one of the following level graphs:
\begin{figure}[H]
  \centering
\begin{subfigure}{0.2\textwidth}
    \centering
\begin{tikzpicture}[main/.style = {draw, circle}]
    %\node[main] at (0,0) {1};
    \fill[black] (0,0) circle (3pt);
    \node[main] at (-.8,1.85) {1};
    \node[main] at (.8,1.85) {1};

    \draw[thick] (0,0) to [out=125, in=-55] (-.8, 1.5);
    \draw[thick] (0,0) to [out=55, in=-125] (.8, 1.5);
    \draw (0,0) -- (0,-.45);
    \draw (0,0) -- (-0.45,0);
    \draw (0,0) -- (0.45,0);

    \draw node at  (-0.75,0) {$-1$};
    \draw node at  (0.75,0) {$-1$};
    \draw node at  (0,-.65) {$4$};
    \draw node at  (-0.6,1) {$1$};
    \draw node at  (0.6,1) {$1$};
  \end{tikzpicture}
\end{subfigure}
\begin{subfigure}{0.2\textwidth}
    \centering
\begin{tikzpicture}[main/.style = {draw, circle}]
    \node[main] at (0,0) {1};
    \fill[black] (-.8,1.85) circle (3pt);
    %\node[main] at (-.8,1.85) {1};
    \node[main] at (.8,1.85) {1};

    \draw[thick] (0,.35) to (-.8, 1.85);
    \draw[thick] (0,.35) to [out=55, in=-125] (.8, 1.5);
    \draw (0,-.35) -- (0,-.7);
    \draw (-.8,1.85) -- (-1.05,2.15);
    \draw (-.8,1.85) -- (-.55,2.15);

    \draw node at  (0,-.9) {$4$};
    \draw node at  (-1.2,2.3) {$-1$};
    \draw node at  (-.4,2.3) {$-1$};
    \draw node at  (-0.6,1) {$1$};
    \draw node at  (0.6,1) {$1$};
  \end{tikzpicture}
\end{subfigure}
\begin{subfigure}{0.2\textwidth}
    \centering
\begin{tikzpicture}[main/.style = {draw, circle}]
    %\node[main] at (0,0) {1};
    \fill[black] (0,0) circle (3pt);
    \fill[black] (-.8,1.85) circle (3pt);
    %\node[main] at (-.8,1.85) {1};
    \node[main] at (.8,1.85) {1};

    \draw[thick] (0,0) to (-.8, 1.85);
    \draw[thick] (0,0) to [out=70, in=200] (.8, 1.5);
    \draw[thick] (0,0) to [out=30, in=-90] (.8, 1.5);
    \draw (0,0) -- (0,-.4);
    \draw (-.8,1.85) -- (-1.05,2.15);
    \draw (-.8,1.85) -- (-.55,2.15);

    \draw node at  (0,-.6) {$4$};
    \draw node at  (-1.2,2.3) {$-1$};
    \draw node at  (-.4,2.3) {$-1$};
    \draw node at  (-0.6,0.9) {$1$};
    \draw node at  (0.8,0.8) {$1$};
    \draw node at  (0.2,1) {$1$};
  \end{tikzpicture}
\end{subfigure}
\begin{subfigure}{0.2\textwidth}
    \centering
\begin{tikzpicture}[main/.style = {draw, circle}]
    %\node[main] at (0,0) {1};
    \fill[black] (0,0) circle (3pt);
    \node[main] at (-.8,1.85) {1};
    %\node[main] at (.8,1.85) {1};
    \fill[black] (0.8,1.85) circle (3pt);

    \draw[thick] (0,0) to [out=120, in=-60] (-.8, 1.5);
    \draw[thick] (0,0) to [out=70, in=200] (.8, 1.85);
    \draw[thick] (0,0) to [out=30, in=-90] (.8, 1.85);
    \draw (0,0) -- (0,-.4);
    \draw (.8,1.85) -- (1.05,2.15);
    \draw (.8,1.85) -- (.55,2.15);

    \draw node at  (0,-.6) {$4$};
    \draw node at  (1.2,2.3) {$-1$};
    \draw node at  (.4,2.3) {$-1$};
    \draw node at  (-0.6,0.9) {$1$};
    \draw node at  (0.8,0.8) {$1$};
    \draw node at  (0.1,1.1) {$1$};
  \end{tikzpicture}
\end{subfigure}.
\caption{$f$-exceptional divisors on $\tilde{D}_h=\overline\cH(4,-1,-1)/S_2$}
\label{fig:f_exceptional_mu_4}
\end{figure}
Of course if $\Lambda$ is any of the four level graphs from above then $f(D_\Lambda)$ is a product of lower dimensional moduli spaces of differentials (potentially with residue conditions), all of which have the CKgP. So, that each of these boundary divisors have the CKgP then follows from \Cref{thm:ckgp_fibration}. Note that while in each cases, the fiber of $D_\Lambda\rightarrow f(D_\Lambda)$ is one dimensional, in the third and fourth cases, there can be reducible fibers. In such cases, to apply \Cref{thm:ckgp_fibration}, we need to stratify $D_\Lambda$ to assure the fibers remain irreducible, as we did above in previous subsection.\\

With this, we can prove:
\begin{proposition}\label[proposition]{prop:chow_ring_mu_(4)}
  Both components of the stratum $\overline\cH(4)$ have the CKgP and the cohomology (as well as the Chow) ring is tautological, generated by the boundary strata.
\end{proposition}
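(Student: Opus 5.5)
The plan is to stratify each component $\overline\cH^{h}(4)$ and $\overline\cH^{nh}(4)$ into locally closed pieces: the open stratum $\cH(4)$, the open vertical boundary strata $D_\Gamma^o$, and the horizontal boundary. We then invoke feature (iii) of \Cref{subsection:ckgp_intro}, so it suffices that each piece has the CKgP.

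\emph{Open stratum and vertical boundary.} The open stratum $\cH(4)$, in both its hyperelliptic and non-hyperelliptic components, has the CKgP by \cite{chen_gorensteinsingularity}. For the vertical boundary, every open stratum $D_\Lambda^o$ given by a level graph with only vertical edges is handled as follows.
\begin{itemize}
\item If $\Lambda$ has no new long edge with enhancement $\geq 2$, then $D_\Lambda^o$ is finitely covered by the level-wise product $B_\Lambda^o$, via $c_\Lambda^o$ and $p_\Lambda^o$ from \Cref{sec:boundary_correspondence}.
\item Every vertex of such a $\Lambda$ is one of the strata listed in the enumeration above, or in Appendix~\ref{appendix}. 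By \Cref{rem:codimension_up_to_two}, it suffices to look at graphs with at most three levels.
\item Each such vertex stratum was shown in Sections~\ref{sec:genustwo}--\ref{sec:genusone}, or in the discussion of $\cR(4,-2,-2)$, to have the CKgP.
\item Features (vi), (ii) and (v) then give the CKgP for $D_\Lambda^o$.
\item For the exceptional strata inside $D_{\Gamma_7}$ and $D_{\Gamma_8}$, the fibration argument via \Cref{thm:ckgp_fibration} and \Cref{rem:ckgp_bdry} applies. For $D_{\Gamma_8}$ we use the stratification $(D_{\Gamma_8})^1\cup(D_{\Gamma_8})^2\cup(D_{\Gamma_8})^3$, which makes the fibers irreducible.
\end{itemize}

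\emph{Horizontal boundary.} The closed horizontal boundary $D_h$ is the finite image of $\tilde D_h=\overline\cH(4,-1,-1)/S_2$ on each component. So by feature (ii) it suffices that $\tilde D_h$ has the CKgP, and this is what the preceding subsection establishes:
\begin{itemize}
\item $X_h$ has the CKgP by analyzing the $g$-exceptional loci over $\overline\cM_{2,1}$, which has the CKgP.
\item The $f$-exceptional divisors in \Cref{fig:f_exceptional_mu_4} are handled by \Cref{thm:ckgp_fibration}.
\end{itemize}
Combining these via feature (iii) gives the CKgP for both components of $\overline\cH(4)$.

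\emph{Chow ring generated by boundary strata.} Here I would use the excision sequence $CH_*(\partial\cH)\to CH_*(\overline\cH)\to CH_*(\cH)\to 0$ inductively along the stratification. The steps are:
\begin{itemize}
\item $CH^*(\cH(4))$ is $\bQ$ in each component \cite{chen_gorensteinsingularity}. Hence every class of positive codimension on $\overline\cH$ is pushed forward from the boundary.
\item On each $D_\Lambda$, the Chow group of the open part $D_\Lambda^o$ is spanned by its fundamental class. This holds because every vertex stratum has Chow ring $\bQ$, as proven in the earlier propositions, because finite covers preserve this rationally, and, for exceptional strata, by \Cref{rem:ckgp_bdry_chow}.
\item By induction on the depth of the stratification, $CH_*(\overline\cH)$ is therefore spanned by the fundamental classes of the components of closed boundary strata $D_\Lambda$, together with the strata of $D_h$.
\item Feature (vii) then transfers this to cohomology, and shows the cycle class map is an isomorphism.
\end{itemize}

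\emph{Main obstacle.} I expect the delicate point to be the horizontal boundary, specifically the Chow-generation claim there. The fibration argument only gives $CH(f^{-1}(\delta^o))$ as $CH(\delta^o)\otimes CH(F)$. One must check that the fiber classes $\xi_i$, which are torus-invariant divisors, are themselves boundary strata; they are, by \Cref{rem:ckgp_bdry}. One must also check that $CH(\delta^o)$ is generated by fundamental classes. For the latter I would use \Cref{rem:ckgp_bdry_chow} together with the explicit Chow computations of Sections~\ref{sec:genustwo}--\ref{sec:genusone}.
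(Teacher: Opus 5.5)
Your treatment of $\overline\cH^{nh}(4)$ is essentially the paper's. So is the Chow-generation step: excision from the interior down through the open strata $D_\Gamma^o$, each with Chow group $\mathbb{Q}$, then \Cref{rem:ckgp_bdry_chow} and feature (vii). The gap is the hyperelliptic component. You say the horizontal-boundary analysis of the preceding subsection applies ``on each component''. That analysis is carried out only for the non-hyperelliptic part of $\overline\cH(4,-1,-1)/S_2$, and it does not transfer, because it rests on the forgetful map $g\colon X_h\to\overline\cM_{2,1}$ being birational. On the hyperelliptic part the quadruple zero $p$ is a Weierstra{\ss} point, so $4p\sim 2K_C$. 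The simple poles then satisfy $q_1+q_2\in|4p-K_C|=|K_C|$, i.e.\ they form an arbitrary hyperelliptic-conjugate pair. Hence the image in $\overline\cM_{2,1}$ is the Weierstra{\ss} divisor and $g$ has one-dimensional fibres. There is no birational model over $\overline\cM_{2,1}$ whose exceptional divisors you could analyze.

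A similar, smaller problem affects the vertical boundary. \Cref{sec:genustwo} proves the CKgP only for $\cH^{nh}(4,-2)$. The bottom level of the hyperelliptic component of $D_{\Gamma_6}$ is $\cH^{h}(4,-2)$, where both marked points are Weierstra{\ss}, so your citation does not cover it. This one is easily fixed by a finite covering from $M_{0,6}$.

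The paper avoids these issues by treating $\overline\cH^h(4)$ globally rather than stratum by stratum. The admissible-double-cover map $\overline{M}_{0,8}\to\overline{M}_{3,1}$, marking the preimage of the first branch point, has image equal to the image of $\overline\cH^h(4)$. That image therefore has the CKgP by feature (ii), and only the exceptional divisors of $\overline\cH^h(4)\to\overline{M}_{3,1}$ need the fibration argument of \Cref{thm:ckgp_fibration}.

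To repair your proposal, either adopt this global argument or give a separate treatment of the hyperelliptic horizontal boundary. For example, a genus-two curve with a Weierstra{\ss} point and a conjugate pair is determined by six branch points on $\bP^1$, one of them distinguished, together with one further point. So that piece is finitely dominated by a quotient of $M_{0,7}$, and you would then analyze its exceptional divisors as in the non-hyperelliptic case.
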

\begin{proof}
  Our discussions above have usually focused on the non-hyperelliptic component of $\overline\cH(4)$. In case of the hyperelliptic component, there is a natural morphism $\overline{M}_{0,8}\rightarrow \overline{M}_{3,1}$ on the level of coarse moduli spaces, taking $(R;p_1,\ldots,p_8)\in \overline{M}_{0,8}$ to $(C,q)\in \overline{M}_{3,1}$, where $C$ is the (stabilization of the) admissible double cover of $R$ branched along $p_1,\ldots,p_8$ and $q$ is the pre-image of $p_1$ under this double cover. The image of this morphism is exactly the image of the hyperelliptic component of $\overline\cH(4)$ in $\overline{M}_{3,1}$. The exceptional divisors can be dealt with analogously to before. This proves the first part of the proposition.

  The second part, that is the computation of the cohomology and Chow rings, is a consequence of \Cref{rem:ckgp_bdry_chow}. Indeed, the Chow ring of $\cH(4)$ is generated by the fundamental classes of the two components, and same can be seen to be true for each (open) boundary stratum $D_\Gamma^o$. Thus, by excision on Chow, the (rational) Chow ring is generated by the boundary strata (and thus is tautological). Since $\overline\cH(4)$ is a smooth and proper Deligne-Mumford stack and has the CKgP, its (rational) cohomology ring is isomorphic to its (rational) Chow ring.
\end{proof}
\section{The stratum $\overline{\mathcal{H}}(2,2)$} \label{sec:h22}

In this section, we will prove that the multi-scale compactification $\overline\cH(2,2)$ has the CKgP and the cohomology (as well as the Chow) ring is generated by the boundary strata. Note that $\overline\cH(2,2)$ has two components: the hyperelliptic $\overline\cH^{h}(2,2)$ and the non-hyperelliptic $\overline\cH^{nh}(2,2)$ ones. By considering the morphism from $\overline{M}_{0,9}$ to the image of $\overline\cH^{h}(2,2)$ in $\overline{M}_{3,2}$ analogus to the $\mu=(4)$ case in \Cref{prop:chow_ring_mu_(4)}, we can easily show that the hyperelliptic component has the CKgP with the Chow ring generated by the boundary divisors. So, in our discussion, we will put more emphasis on the non-hyperelliptic component $\overline\cH^{nh}(2,2)$.

\subsection{Enumerating the boundary divisors}
The boundary divisors of $\overline\cH(2,2)$ are either horizontal boundary $D_h$ or given by one of the following (enhanced) level graphs:
\begin{figure}[H]
  \centering
  \begin{subfigure}{0.2\textwidth}
    \centering
\begin{tikzpicture}[main/.style = {draw, circle}]
    \node[main] at (0,0) {2};
    %\fill[black] (0,0) circle (3pt);
    \fill[black] (0, -1.5) circle (3pt);

    \draw[thick] (0,-.35) to [out=225, in=135] (0, -1.5);
    \draw[thick] (0,-.35) to [out=315, in=45] (0, -1.5);
    \draw (0,-1.5) -- (0.2,-2);
    \draw (0,-1.5) -- (-0.2,-2);

    \draw node at  (0.3,-2.2) {$2$};
    \draw node at  (-0.3,-2.2) {$2$};
    \draw node at  (-0.4,-1) {$3$};
    \draw node at  (0.4,-1) {$1$};
  \end{tikzpicture}
  \caption{$\Gamma_1$}
\end{subfigure}
\begin{subfigure}{0.2\textwidth}
    \centering
\begin{tikzpicture}[main/.style = {draw, circle}]
    \node[main] at (0,0) {2};
    %\fill[black] (0,0) circle (3pt);
    \fill[black] (0, -1.5) circle (3pt);

    \draw[thick] (0,-.35) to [out=225, in=135] (0, -1.5);
    \draw[thick] (0,-.35) to [out=315, in=45] (0, -1.5);
    \draw (0,-1.5) -- (0.2,-2);
    \draw (0,-1.5) -- (-0.2,-2);

    \draw node at  (0.3,-2.2) {$2$};
    \draw node at  (-0.3,-2.2) {$2$};
    \draw node at  (-0.4,-1) {$2$};
    \draw node at  (0.4,-1) {$2$};
  \end{tikzpicture}
  \caption{$\Gamma_2$}
\end{subfigure}
\begin{subfigure}{0.2\textwidth}
    \centering
\begin{tikzpicture}[main/.style = {draw, circle}]
    \node[main] at (0,0) {1};
    %\fill[black] (0,0) circle (3pt);
    \node[main] at (0,-1.85) {1};

    \draw[thick] (0,-.35) to [out=225, in=135] (0, -1.5);
    \draw[thick] (0,-.35) to [out=315, in=45] (0, -1.5);
    \draw (0,-2.2) -- (0.2,-2.7);
    \draw (0,-2.2) -- (-0.2,-2.7);

    \draw node at  (0.3,-2.9) {$2$};
    \draw node at  (-0.3,-2.9) {$2$};
    \draw node at  (-0.4,-1) {$1$};
    \draw node at  (0.4,-1) {$1$};
  \end{tikzpicture}
  \caption{$\Gamma_3$}
\end{subfigure}
\begin{subfigure}{0.2\textwidth}
    \centering
\begin{tikzpicture}[main/.style = {draw, circle}]
    \node[main] at (0,0) {1};
    %\fill[black] (0,0) circle (3pt);
    \fill[black] (0, -1.5) circle (3pt);

    \draw[thick] (0,-.35) to [out=210, in=150] (0, -1.5);
    \draw[thick] (0,-.35) to [out=330, in=30] (0, -1.5);
    \draw[thick] (0,-.35) to [out=270, in=90] (0, -1.5);
    \draw (0,-1.5) -- (0.2,-2);
    \draw (0,-1.5) -- (-0.2,-2);

    \draw node at  (0.3,-2.2) {$2$};
    \draw node at  (-0.3,-2.2) {$2$};
    \draw node at  (-0.5,-1) {$1$};
    \draw node at  (-0.1,-1) {$1$};
    \draw node at  (0.5,-1) {$1$};
  \end{tikzpicture}
  \caption{$\Gamma_4$}
\end{subfigure}
\begin{subfigure}{0.2\textwidth}
    \centering
\begin{tikzpicture}[main/.style = {draw, circle}]
    \node[main] at (0,0) {2};
    %\fill[black] (0,0) circle (3pt);
    \node[main] at (0,-1.85) {1};

    \draw[thick] (0,-.35) to [out=270, in=90] (0, -1.5);
    \draw (0,-2.2) -- (0.2,-2.7);
    \draw (0,-2.2) -- (-0.2,-2.7);

    \draw node at  (0.3,-2.9) {$2$};
    \draw node at  (-0.3,-2.9) {$2$};
    \draw node at  (0.1,-1) {$3$};
  \end{tikzpicture}
  \caption{$\Gamma_5$}
\end{subfigure}
\begin{subfigure}{0.2\textwidth}
    \centering
\begin{tikzpicture}[main/.style = {draw, circle}]
    \node[main] at (0,0) {1};
    %\fill[black] (0,0) circle (3pt);
    \node[main] at (0,-1.85) {2};

    \draw[thick] (0,-.35) to [out=270, in=90] (0, -1.5);
    \draw (0,-2.2) -- (0.2,-2.7);
    \draw (0,-2.2) -- (-0.2,-2.7);

    \draw node at  (0.3,-2.9) {$2$};
    \draw node at  (-0.3,-2.9) {$2$};
    \draw node at  (0.1,-1) {$1$};
  \end{tikzpicture}
  \caption{$\Gamma_6$}
\end{subfigure}
\begin{subfigure}{0.2\textwidth}
    \centering
\begin{tikzpicture}[main/.style = {draw, circle}]
    \node[main] at (0,0) {1};
    %\fill[black] (0,0) circle (3pt);
    \node[main] at (-.8,1.85) {1};
    \node[main] at (.8,1.85) {1};

    \draw[thick] (0,.35) to [out=125, in=-55] (-.8, 1.5);
    \draw[thick] (0,.35) to [out=55, in=-125] (.8, 1.5);
    \draw (0,-.35) -- (0.2,-.8);
    \draw (0,-.35) -- (-0.2,-.8);

    \draw node at  (0.3,-1) {$2$};
    \draw node at  (-0.3,-1) {$2$};
    \draw node at  (-0.6,1) {$1$};
    \draw node at  (0.6,1) {$1$};
  \end{tikzpicture}
  \caption{$\Gamma_7$}
\end{subfigure}
\begin{subfigure}{0.2\textwidth}
    \centering
\begin{tikzpicture}[main/.style = {draw, circle}]
    %\node[main] at (0,0) {1};
    \fill[black] (0,0) circle (3pt);
    \node[main] at (-.8,1.85) {1};
    \node[main] at (.8,1.85) {1};

    \draw[thick] (0,0) to [out=120, in=-60] (-.8, 1.5);
    \draw[thick] (0,0) to [out=70, in=200] (.8, 1.5);
    \draw[thick] (0,0) to [out=30, in=-90] (.8, 1.5);
    \draw (0,0) -- (0.2,-.5);
    \draw (0,0) -- (-0.2,-.5);

    \draw node at  (0.3,-.7) {$2$};
    \draw node at  (-0.3,-.7) {$2$};
    \draw node at  (-0.6,0.9) {$1$};
    \draw node at  (0.8,0.8) {$1$};
    \draw node at  (0.2,1) {$1$};
  \end{tikzpicture}
  \caption{$\Gamma_8$}
\end{subfigure}
\begin{subfigure}{0.2\textwidth}
    \centering
\begin{tikzpicture}[main/.style = {draw, circle}]
    \node[main] at (0,0) {3};
    %\fill[black] (0,0) circle (3pt);
    \fill[black] (0, -1.5) circle (3pt);

    \draw[thick] (0,-.35) to [out=270, in=90] (0, -1.5);
    \draw (0,-1.5) -- (0.2,-2);
    \draw (0,-1.5) -- (-0.2,-2);

    \draw node at  (0.3,-2.2) {$2$};
    \draw node at  (-0.3,-2.2) {$2$};
    \draw node at  (-0.1,-1) {$5$};
  \end{tikzpicture}
  \caption{$\Gamma_{9}$}
\end{subfigure}
\begin{subfigure}{0.2\textwidth}
    \centering
\begin{tikzpicture}[main/.style = {draw, circle}]
    \node[main] at (0,0) {2};
    %\fill[black] (0,0) circle (3pt);
    \node[main] at (0,-1.85) {1};

    \draw[thick] (0,-.35) to [out=270, in=90] (0, -1.5);
    \draw (0,-2.2) -- (0,-2.7);
    \draw (0,0.35) -- (0,0.85);

    \draw node at  (0,-2.9) {$2$};
    \draw node at  (0.1,1) {$2$};
    \draw node at  (0.1,-1) {$1$};
  \end{tikzpicture}
  \caption{$\Gamma_{10}^a$, $\Gamma_{10}^b$}
\end{subfigure}
\begin{subfigure}{0.2\textwidth}
    \centering
\begin{tikzpicture}[main/.style = {draw, circle}]
    \node[main] at (0,0) {2};
    %\fill[black] (0,0) circle (3pt);
    \fill[black] (0, -1.5) circle (3pt);

    \draw[thick] (0,-.35) to [out=225, in=135] (0, -1.5);
    \draw[thick] (0,-.35) to [out=315, in=45] (0, -1.5);
    \draw (0,-1.5) -- (0,-2);
    \draw (0,0.35) -- (0,0.85);

    \draw node at  (0,-2.2) {$2$};
    \draw node at  (0.1,1) {$2$};
    \draw node at  (-0.4,-1) {$1$};
    \draw node at  (0.4,-1) {$1$};
  \end{tikzpicture}
  \caption{$\Gamma_{11}^a$, $\Gamma_{11}^b$}
\end{subfigure}
\begin{subfigure}{0.2\textwidth}
    \centering
\begin{tikzpicture}[main/.style = {draw, circle}]
    %\node[main] at (0,0) {1};
    \fill[black] (0,0) circle (3pt);
    \node[main] at (-.8,1.85) {1};
    \node[main] at (.8,1.85) {2};

    \draw[thick] (0,0) to [out=120, in=-60] (-.8, 1.5);
    \draw[thick] (0,0) to [out=60, in=-120] (.8, 1.5);
    \draw (0,0) -- (0.2,-.5);
    \draw (0,0) -- (-0.2,-.5);

    \draw node at  (0.3,-.7) {$2$};
    \draw node at  (-0.3,-.7) {$2$};
    \draw node at  (-0.6,0.9) {$1$};
    \draw node at  (0.6,0.9) {$3$};
  \end{tikzpicture}
  \caption{$\Gamma_{12}$}
\end{subfigure}
\begin{subfigure}{0.2\textwidth}
    \centering
\begin{tikzpicture}[main/.style = {draw, circle}]
    %\node[main] at (0,0) {1};
    \fill[black] (0,0) circle (3pt);
    \node[main] at (-1,1.85) {1};
    \node[main] at (0,1.85) {1};
    \node[main] at (1,1.85) {1};

    \draw[thick] (0,0) to [out=120, in=-60] (-1, 1.5);
    \draw[thick] (0,0) to [out=90, in=-90] (0, 1.5);
    \draw[thick] (0,0) to [out=60, in=-120] (1, 1.5);
    \draw (0,0) -- (0.2,-.5);
    \draw (0,0) -- (-0.2,-.5);

    \draw node at  (0.3,-.7) {$2$};
    \draw node at  (-0.3,-.7) {$2$};
    \draw node at  (-0.8,0.9) {$1$};
    \draw node at  (-0.1,0.9) {$1$};
    \draw node at  (0.9,0.9) {$1$};
  \end{tikzpicture}
  \caption{$\Gamma_{13}$}
\end{subfigure}
\caption{Vertical divisors in $\overline\cH(2,2)$}
\label{fig:vertical_graphs_mu_(22)}
\end{figure}
Depending on which marked double zero remains in the bottom level and which in the top level, the tenth and eleventh graphs above will have two possibilities each. So, we have distinguished the two possibilities by $\Gamma_i^a$ and $\Gamma_i^b$, respectively.

The level-wise product model for (the interior of) each of these boundary divisors can be enumerated as follows:
\begin{enumerate}
\item $D_h^o\sim \cH(2,2,-1,-1)/S_2$ is the horizontal divisor. It has two components, one in each of $\overline\cH^{h}(2,2)$ and $\overline\cH^{nh}(2,2)$.
\item $D_{\Gamma_1}^o\sim \cH(2,0)\times \cH(2,2,-4,-2)$. Analogus to the $\mu=(4)$ case, $D_{\Gamma_1}$ again has exactly one component contained in $\overline\cH^{nh}(2,2)$.
\item $D_{\Gamma_2}^o\sim (\cH(1,1)\times \cH(2,2,-3,-3))/S_2$ (where $S_2\cong\text{Aut}\,(\Gamma_2)$). Again, similarly to the $\mu=(4)$ case, each compatible collection of twisted differentials has two equivalence classs of prong-matchings, which leads to $D_{\Gamma_2}$ having two components, one in each of $\overline\cH^h(2,2)$ and $\overline\cH^{nh}(2,2)$.
\item $D_{\Gamma_3}^o\sim (\cH(0,0)\times\cH(2,2,-2,-2))/S_2$ (where $S_2\cong\text{Aut}\,(\Gamma_3)$). The stratum $\cH(2,2,-2,-2)$ has two components distinguished by the rotation number. Consequently, $D_{\Gamma_3}$ has two components, one in each of $\overline\cH^h(2,2)$ and $\overline\cH^{nh}(2,2)$.
\item $D_{\Gamma_4}^o\sim (\cH(0,0,0)\times \cH(2,2,-2,-2,-2))/S_3$ (where $S_3\cong\text{Aut}\,(\Gamma_4)$). This boundary divisor is irreducible contained in $\overline\cH^{nh}(2,2)$.
\item $D_{\Gamma_5}^o\sim \cH(2)\times \cH(2,2,-4)$. The stratum $\cH(2,2,-4)$ has two components distinguished by the rotation number. Consequently, $D_{\Gamma_5}$ has two components, one in each of $\overline\cH^{h}(2,2)$ and $\overline\cH^{nh}(2,2)$.
\item $D_{\Gamma_6}^o\sim \cH(0)\times \cH(2,2,-2)$. The stratum $\cH(2,2,-2)$ has two components distinguished by hyperellipticity. Consequently $D_{\Gamma_6}$ has two components, one in each of $\overline\cH^{h}(2,2)$ and $\overline\cH^{nh}(2,2)$.
\item $D_{\Gamma_7}^o\sim (\bP(\Omega\cM_{1,1}\times\Omega\cM_{1,1})\times \cR(2,2,-2,-2))/S_2$ (where $S_2\cong\text{Aut}\,(\Gamma_7)$). The residueless stratum $\cR(2,2,-2,-2)$ has two components distinguished by ramification profile. Consequently, $D_{\Gamma_7}$ has two components, one in each of $\overline\cH^{h}(2,2)$ and $\overline\cH^{nh}(2,2)$. We will show below that $\cR(2,2,-2,-2)/S_2$ has the CKgP.
\item $D_{\Gamma_8}^o\sim \bP(\Omega\cM_{1,1}\times\Omega\cM_{1,2})\times \cH(2,2\mid-2
\mid-2,-2)/S_2$ (where $S_2\cong\text{Aut}\,(\Gamma_8)$). This divisor is irreducible, contained in $\overline\cH^{nh}(2,2)$. We will see below that $\cH(2,2\mid-2
\mid-2,-2)$ has the CKgP.
\item $D_{\Gamma_9}^o\sim \cH(4)\times \cH(2,2,-6)\sim \cH(4)$. Since $\cH(4)$ has two components distinguished by hyperellipticity, $D_{\Gamma_9}$ also has two components, one in each of $\overline\cH^{h}(2,2)$ and $\overline\cH^{nh}(2,2)$.
\item $(D_{\Gamma_{10}}^a)^o\sim \cH(2,0)\times \cH(2,-2)$ is irreducible, contained in $\overline\cH^{nh}(2,2)$. Same for $D_{\Gamma_{10}}^b$.
\item $(D_{\Gamma_{11}}^a)^o\sim \cH(2,0,0)\times \cH(2,-2,-2)$ is also irreducible, contained in $\overline\cH^{nh}(2,2)$. Same for $D_{\Gamma_{11}}^b$.
\item $D_{\Gamma_{12}}^o\sim (\bP(\Omega\cM_{1,1}\times\Omega\cM_{2,1}(2))\times \cR(2,2,-2,-4))$. This divisor is irreducible, contained in $\overline\cH^{nh}(2,2)$. Note that $\cR(2,2,-2,-4)$ is 0-dimensional.
\item $D_{\Gamma_{13}}^o\sim (\bP(\Omega\cM_{1,1}\times\Omega\cM_{1,1}\times \Omega\cM_{1,1})\times \cR(2,2,-2,-2,-2))$. This divisor is irreducible, contained in $\overline\cH^{nh}(2,2)$. Note that $\cR(2,2,-2,-2,-2)$ is 0-dimensional.
\end{enumerate}
\subsection*{Strata with residue conditions:} From the above enumeration, we see that $\cR(2,2,-2,-2)/S_2$ (where $S_2$ symmetrizes double poles) and $\cH(2,2\mid-2\mid-2,-2)$ are the only two vertices that we have not yet shown to have the CKgP, which we do now. The case of $\cH(2,2\mid-2\mid-2,-2)$ is straightforward:
\begin{proposition}
    The stratum $\cH(2,2 \mid -2 \mid -2,-2)$ is a rational curve.     
\end{proposition}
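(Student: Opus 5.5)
We first pin down which generalized stratum $\cH(2,2\mid-2\mid-2,-2)$ is, by reading off the bottom vertex of $\Gamma_8$. It is a genus zero vertex carrying the two double zeros $z_1,z_2$ and three double poles $q_1,q_2,q_3$. Here $q_1$ is the pole attached to the elliptic curve $\bP\Omega\cM_{1,1}$, and $q_2,q_3$ are attached to the curve $\bP\Omega\cM_{1,2}$. The global residue conditions require $\res_{q_1}\eta=0$ and $\res_{q_2}\eta+\res_{q_3}\eta=0$; together with the residue theorem this forces all three residues to vanish. The space therefore has dimension $\dim\cH_{0,5}(2,2,-2,-2,-2)-2=5-3-2-$... more concretely, after projectivizing it is one-dimensional. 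We parametrize it directly.

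We use the $PGL_2$ action to normalize the three double poles to $q_1=\infty$, $q_2=0$, $q_3=1$. A differential with double poles exactly at $0,1,\infty$ and zeros of order two at $z_1,z_2$ has the form
\[
\eta = c\,\frac{(z-z_1)^2(z-z_2)^2}{z^2(z-1)^2}\,dz ,
\]
since the degree count $4-4-2=-2$ matches the order of the double pole at $\infty$. Up to scaling by $c$, the data is the unordered or ordered pair $(z_1,z_2)$, which lives in a two-dimensional open subset of $\mathbb{A}^2$.

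We then impose the residue conditions. The residues at $0$ and at $1$ are explicit polynomials in $(z_1,z_2)$:
\[
\res_0\eta\propto \frac{d}{dz}\Big|_{0}\frac{(z-z_1)^2(z-z_2)^2}{(z-1)^2}, \qquad \res_1\eta\propto \frac{d}{dz}\Big|_{1}\frac{(z-z_1)^2(z-z_2)^2}{z^2}.
\]
Residue-freeness at $\infty$ is then automatic. The first condition is $z_1z_2\bigl(z_1z_2+(z_1+z_2)\bigr)\cdot(\ldots)$; after removing the excluded factors $z_i=0$, it becomes a single equation, symmetric in $z_1,z_2$. We pass to the elementary symmetric functions $s=z_1+z_2$ and $p=z_1z_2$. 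The condition $\res_0=0$ gives $p=-s$ (up to a unit factor), and the condition $\res_1=0$ gives one further equation. These two combined are too many, which signals that we should not have imposed all three vanishing residues independently of the level structure. The actual condition is only the two GRCs. Since the residue theorem makes the three residues sum to zero, the conditions $\res_{q_1}=0$ and $\res_{q_2}+\res_{q_3}=0$ are dependent, so only \emph{one} independent condition remains, namely $\res_\infty\eta=0$.

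We therefore impose $\res_\infty\eta=0$, which in the coordinates $(s,p)$ is a single polynomial equation $F(s,p)=0$. The main step, and the main obstacle, is to check that this equation is of low degree in one of the variables. We expect it to be linear in $p$, since the residue at $\infty$ is the coefficient of $z^{-1}$ in the Laurent expansion of $(z^2-sz+p)^2/(z^2(z-1)^2)$, which is linear in $p$ modulo terms in $s$. Solving for $p$ as a rational function of $s$ then exhibits the curve as an open subset of $\mathbb{A}^1_s$. The remaining automorphisms are the stabilizer of $\{0,1\}$ fixing $\infty$, which swaps $0$ and $1$ when $q_2,q_3$ are symmetrized, together with any labelling symmetry of the zeros. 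These act by a finite group, and a quotient of a rational curve by a finite group is again rational by L\"uroth. If $F$ turns out not to be linear in $p$, the fallback is to compute its genus by a direct plane-curve degree and singularity count. Alternatively, one can use the projection to the cross-ratio and an Euler characteristic computation as was done for $\cR(4,-2,-2)$.
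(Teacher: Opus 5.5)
Your route is the paper's: fix the three double poles at $0,1,\infty$, write $\eta=\frac{(z-z_1)^2(z-z_2)^2}{z^2(z-1)^2}\,dz$, and impose the single independent residue condition. Your eventual observation that $\res_{q_1}=0$ plus the residue theorem already gives $\res_{q_2}+\res_{q_3}=0$ is exactly right. However, the proposal stops short of a proof. The one computation that constitutes the argument is never carried out, and the mechanism you predict for it fails. Expanding at infinity,
\[
\frac{z^4-2sz^3+(s^2+2p)z^2-2spz+p^2}{z^4-2z^3+z^2}=1+\frac{2-2s}{z}+O(z^{-2}),
\]
so $\res_\infty\eta=2(s-1)$ does not involve $p$ at all. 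Your step ``solve for $p$ as a rational function of $s$'' is therefore impossible.

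The actual outcome is simpler than you expected. The condition is $z_1+z_2=1$ with $z_1$ free. Fixing $q_1,q_2,q_3$ at $\infty,0,1$ leaves no residual automorphisms, so with labelled points the stratum is $\{z_1\in\bP^1\setminus\{0,\tfrac12,1,\infty\}\}$. This is the image of the paper's conic $ab=a+b$, i.e.\ $\tfrac1a+\tfrac1b=1$, under $z\mapsto 1/z$. The paper simply places the residue-free pole at $0$ instead of at $\infty$.

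A few other points need correcting.
\begin{itemize}
\item Your opening assertion that the GRCs together with the residue theorem force all three residues to vanish is false, and should be deleted. If all three residues vanished you would be in $\cR(2,2,-2,-2,-2)$, which is $0$-dimensional (cut out by $p=s=1$).
\item The dimension count in that paragraph is garbled. The projectivized genus-zero stratum has dimension $\dim\cM_{0,5}=2$, and one condition leaves a curve.
\item You have a sign error: $\res_0\eta=2z_1z_2(z_1z_2-z_1-z_2)$, so that equation reads $p=s$, not $p=-s$. As a consistency check, $\res_1\eta=2(1-z_1)(1-z_2)(1-p)$ and $\res_0\eta+\res_1\eta=2(1-s)=-\res_\infty\eta$.
\end{itemize}

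The equation you discarded, $p=s$, is precisely the paper's conic, and it is indeed linear in $p$. Had you put $q_1$ at $0$ rather than at $\infty$, that equation alone would have been the entire proof.
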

\begin{proof}
    A differential in $\cH(2,2 \mid -2 \mid -2,-2)$ can be written as \[\omega=\frac{(z-a)^2(z-b)^2}{z^2(z-1)^2}dz\] for distinct $a,b\in \mathbb{P}^1\setminus\{0,\infty,1\}$. At the pole $z=0$, \[\operatorname{res}_{z=0}\omega=2a^2b^2\left(1-\frac{1}{a}-\frac{1}{b}\right) = 0.\]

    So $\cH(2,2 \mid -2 \mid -2,-2)$ is isomorphic to a dense subset of the plane conic curve $V(ab-b-a)\subset \operatorname{Spec}\mathbb{C}[a,b]$. 
\end{proof}

The case of the residueless stratum $\cR(2,2,-2,-2)/S_2$ requires a bit more work. Recall that $\cR(2,2,-2,-2)$ has two connected components, both hyperelliptic. They are also distinguished by rotation number ($r=1$ or $2$). Let $\cC_r$ be the connected component with rotation number $r$.

\begin{proposition}
    Each connected component of the stratum $\cR^u(2,2,-2,-2)\coloneqq \cR(2,2,-2,-2)/S_2$ (where $S_2$ acts by symmetrizing the double poles) is a rational surface.
\end{proposition}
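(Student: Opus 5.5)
Since both components of $\cR(2,2,-2,-2)$ are hyperelliptic, I would pass to the quotient by the hyperelliptic involution and write down explicit rational parameters. The stratum $\cR(2,2,-2,-2)$ is a locus in $\bP\Omega\cM_{1,4}$. It has dimension $\dim\cM_{1,4}-2=2$: the stratum $\cH(2,2,-2,-2)$ has projective dimension $3$, and the vanishing of one residue removes one more (the two residues sum to zero). So $\cR^u(2,2,-2,-2)$ is a surface, and it suffices to exhibit a dominant rational map from $\mathbb{P}^2$ (or $\mathbb{A}^2$) to each component that is generically injective, or at least whose generic fibre is a point up to finite automorphisms. Indeed, coarse moduli spaces are what matter for rationality.

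\textbf{Step 1: normal form.} On a hyperelliptic component, the elliptic involution $\iota$ (the double cover $E\to\bP^1$) either swaps or fixes the marked points. I would treat the two rotation numbers separately.

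\begin{itemize}
\item For $r=2$ I expect $\omega=\pi^*\eta$ for a differential $\eta$ on $\bP^1$ with the zeros and poles lying at branch points, or a similar pullback.
\item For $r=1$ I expect $\iota$ to swap $p_1\leftrightarrow p_2$ and $q_1\leftrightarrow q_2$, with $\iota^*\omega=\pm\omega$.
\end{itemize}

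In the swapping case, I would write $E: y^2=f(x)$ and take $\omega=\frac{(y-g(x))\,dx}{(x-c)^2y}$-type expressions. Equivalently, I would descend $\omega^{\otimes 2}$, or $\omega$ itself if $\iota$-anti-invariant, to a quadratic or abelian differential on $\bP^1$ with prescribed singularities at the images of the points and at the four branch points.

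\textbf{Step 2: rational parameters.} On $\bP^1$, I would use $\mathrm{PGL}_2$ to normalize three special points (for instance the images of the double poles and one zero) to $0,1,\infty$. The remaining data are then the positions of the other images and branch points, subject to the residue equation. The residue equation becomes a single polynomial equation in these parameters. I would then show this equation is linear in one variable, or defines a conic bundle with a section, mirroring the computation for $\cH(2,2\mid-2\mid-2,-2)$ where the residue condition cut out the conic $ab-a-b=0$. Solving for that variable gives a birational parameterization of the component by an open subset of $\mathbb{A}^2$, before taking the $S_2$ quotient.

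\textbf{Step 3: the $S_2$ quotient.} Symmetrizing the double poles corresponds to a residual involution of the parameter space, for example $z\mapsto 1-z$ exchanging $0$ and $1$ if the poles sit there. The quotient of a rational surface by an involution is rational, since unirational surfaces over $\mathbb{C}$ are rational by Castelnuovo. I would invoke that theorem directly, which sidesteps computing invariants.

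More generally, it suffices to show that each component of $\cR(2,2,-2,-2)$ itself is unirational; Castelnuovo then gives rationality of the quotient for free. As a consistency check, one could compare with an Euler characteristic computation as was done for $\cR(4,-2,-2)$ via \cite{LT}.

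\textbf{Main obstacle.} The main obstacle is Step 1: identifying correctly, for each rotation number, how the involution acts and how $\omega$ descends. Once the descended differential is written explicitly, the residue condition is a single explicit equation, and the rationality check is routine.

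If the descent picture is messy, I would try first the fibration approach used for $\cH(2,0,-2)$ in \Cref{sec:genusone}. This means forgetting one zero to map to a curve, namely a one-dimensional modular-type locus with rational coarse space, and checking that the fibres are rational curves (quotients of $E$ by $\iota$). That would give a conic bundle over a rational curve, hence rationality by Tsen's theorem.
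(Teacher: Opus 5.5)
\textbf{There is a genuine gap: the proposal never shows that either component is rational.} Your outer framework is sound. The dimension count, the reduction to coarse spaces, and the use of Castelnuovo's theorem (so that unirationality of each labelled component suffices) are all correct; the last point even covers the passage from $\cR_1$ to $\cR_1/S_2$, which the paper leaves implicit. But Step 1, which you yourself call the main obstacle, is never carried out, and the expectations you record for it are wrong.
\begin{itemize}
\item A pullback $\pi^*\eta$ under the double cover has odd order at every ramification point, whereas $\omega$ has even order everywhere. So $\omega$ is never $\iota$-invariant; it is always anti-invariant, $\omega=F(x)\,dx/y$.
\item Four marked points all fixed by $\iota$ would form a coset of $E[2]$, which gives a locus of dimension at most one, not two.
\item The behaviour you assign to $r=1$ (both pairs swapped) is actually the $r=2$ behaviour, where $p_1+p_2\sim p_3+p_4$. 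For $r=1$ one has $2p_3\sim 2p_4$, so $\iota$ fixes both poles and swaps only the zeros.
\end{itemize}
Consequently, your Step 2 claim that the residue condition is linear in one variable, or a conic bundle with a section, is not derived from anything. The fallback also fails as stated. Forgetting one zero is quasi-finite, because $2p_1\sim 2p_3+2p_4-2p_2$ leaves at most four choices for $p_1$. Its image is therefore a surface, not a curve.

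\textbf{What is missing is the actual geometry of the two components.} The paper obtains it by forgetting \emph{both} zeros.
\begin{itemize}
\item For $r=1$, the image in $M_{1,2}$ is the rational curve $\cH(2,-2)$. The fibre over $(E;p_3,p_4)$ is $\{p_1+p_2\in|2p_3|\}/\iota\cong\bP^1$, so $\cR_1$ is birational to a $\bP^1$-bundle over a rational curve.
\item For $r=2$, the map $\cR_2^u\to\overline M_{1,2}/S_2$ is dominant, not onto a curve. The paper proves it is generically injective by restricting to the horizontal divisor and solving the two residue equations explicitly. Rationality then follows from that of $\overline M_{1,2}/S_2$.
\end{itemize}

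Your descent idea can be completed once the involution data is right.
\begin{itemize}
\item For $r=2$, take $x$ with divisor $p_1+p_2-p_3-p_4$. Then $\omega=c\,x^2dx/y$ with $y^2=f(x)$ a monic quartic. The residue at the poles vanishes exactly when the $x^3$-coefficient of $f$ vanishes. This exhibits $\cR_2^u$ as birational to an open subset of $\mathbb{A}^3$ modulo $\mathbb{G}_m$ acting with weights $(2,3,4)$, which is rational.
\item For $r=1$, take $y^2$ cubic, $p_3=\infty$ and $p_4=(e,0)$. Then $\omega=c\,(x-x_0)^2dx/\bigl((x-e)y\bigr)$, which is residueless automatically by anti-invariance. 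This recovers the paper's $\bP^1$-bundle.
\end{itemize}
None of this appears in the proposal, so as written it does not establish the statement.
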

For the component of rotation number 1, we prove a stronger statement for the stratum without the $S_2$ action.
\begin{proof}
    Let us denote by $\overline\cR(2,2,-2,-2)$ the multi-scale compactification of $\overline\cR(2,2,-2,-2)$ (and by $\overline\cR^u(2,2,-2,-2)$ its $S_2$ quotient). First consider the component $\cR_2^u\subset \overline\cR^u(2,2,-2,-2)$ of rotation number 2. Forgetting the double zeros gives a generically finite morphism $p:\cR_2^u\rightarrow \overline{M}_{1,2}/S_2$. We claim that on the level of coarse moduli space, this morphism is generically one-to-one. Let $\delta_{\text{irr}}\subset \overline{M}_{1,2}/S_2$ be the boundary divisor parameterizing rational nodal curves. Then $p^{-1}(\delta_{\text{irr}})$ consists of the horizontal boundary divisor $D_h^2\subset \cR_2^u$ and $D_\Gamma^2$ corresponding to the following enhanced level graph:
    \begin{figure}[H]
        \centering
        \begin{tikzpicture}[main/.style = {draw, circle}]
    \fill[black] (0,0) circle (3pt);
    \fill[black] (0, -1.5) circle (3pt);

    \draw[thick] (0,0) to [out=225, in=135] (0, -1.5);
    \draw[thick] (0,0) to [out=315, in=45] (0, -1.5);
    \draw (0,-1.5) -- (0.5,-2);
    \draw (0,-1.5) -- (-0.5,-2);
    \draw (0,0) -- (0.5,0.5);
    \draw (0,0) -- (-0.5,0.5);

    \draw node at  (0.6,-2.2) {$2$};
    \draw node at  (-0.6,-2.2) {$2$};
    \draw node at  (-0.5,-0.9) {$2$};
    \draw node at  (0.5,-0.9) {$2$};
    \draw node at  (0.6,0.7) {$-2$};
    \draw node at  (-0.6,0.7) {$-2$};
  \end{tikzpicture}.
    \end{figure}
    Of course $D_\Gamma^2$ is a $p$-exceptional divisor, so for a generic $(C;q_1+q_2)\in \delta_{\text{irr}}$, the fiber $p^{-1}(C;q_1+q_2)$ is contained in $D_h^2$. Since $p$ is unramified along $D_h^2$, to show it is generically one-to-one, it suffices to prove that $p|_{D_h^2}$ is one-to-one. Note that $D_h^2$ is birational to the component of $\overline\cH(2,2\ |-2\ |\ -2\ |\ -1,-1)/(S_2\times S_2)$ of rotation number 2, where the first factor of $S_2$ symmetrizes the two double poles, and the second factor of $S_2$ symmetrizes the two simple poles (and the vertical lines "|" delineate the poles whose residues sum to 0). Then the morphism $D_h^2\rightarrow \delta_{\text{irr}}$ then corresponds to $f:\overline\cH(2,2\ |-2\ |\ -2\ |\ -1,-1)/(S_2\times S_2)\rightarrow \overline{M}_{0,4}/(S_2\times S_2)$. Suppose the two double poles be 0 and $\infty$ and the two simple poles be 1 and $a\in \mathbb{P}^1\setminus\{0,1,\infty\}$, then a differential in its $f$-fiber can be written as:
    \[\omega=\frac{(z-b)^2(z-c)^2}{z^2(z-1)(z-a)}dz\] for $b,c\in \mathbb{P}^1\setminus\{0,\infty,1,a\}$. Imposing $\text{Res}_0(\omega)=0$ gives:
    \[bc\left(1+\frac{1}{a}\right)=2b+2c.\]On the other hand, imposing $\text{Res}_\infty(\omega)=0$ gives:
    \[b+c=\frac{a+1}{2}.\] Solving these two equations yield two components: $H_1\coloneqq\{a=-1,\ b+c=0\}$ and $H_2=\{bc=a,\ b+c=(a+1)/2\}$. Since the hyperelliptic involution swaps each of the two double zeros, the two double poles and the simple poles in the component of rotation number 2, it has to be $H_2$ (with the involution $\iota:z\mapsto a/z)$. Thus, the morphism $f$ is given by $H_2\rightarrow \overline{M}_{0,4}/(S_2\times S_2)$. Then the above computation shows that above general point in $M_{0,4}/(S_2\times S_2)$, there is a unique differential in $H_2$, whose zeros are conjugate under the hyperelliptic involution. Since a general point in $M_{0,4}/(S_2\times S_2)$ has a non-trivial isotropy containing the hyperelliptic involution, $f$ is one-to-one on the level of coarse moduli space, which implies $p|_{D_{\text{irr}}}$ (and thus also $p$) is generically one-to-one. Since $\overline{M}_{1,2}/S_2$ is rational, we conclude that $\cR_2^u$ is also rational.\\

    Next, let us look at the component $\cR_1\subset \overline\cR(2,2,-2,-2)$, without symmetrizing the double poles. Then, unlike the rotation number 2 case above, the morphism $p:\cR_1\rightarrow\overline{M}_{1,2}$ defined by forgetting the double zeros is not dominant. Indeed, from the above computations, we saw that the horizontal boundary $D_h^1$ is birational to $H_1=\{a=-1,\ b+c=0\}$ (where we symmetrize the simple poles $1$ and $a=-1$). So, forgetting the two double zeros $b$ and $c$ gives a morphism to $\overline{M}_{0,4}/S_2$ whose image is just a point $(\mathbb{P}^1;0,\infty,1,-1)$. We can analogously list all other boundary divisors of non-compact type and see that they all get contracted by $p$.

    In fact the image $p(\cR_1)\subset \overline{M}_{1,2}$ is contained in $\overline{\cH}(2,-2)$. Consequently, if we denote by $\cR_1^o\subset\cR_1$ its interior then $\cR_1^o\subset \cH(2,2,-2,-2)\cap \cH(0,0,2,-2)$ in $M_{1,4}$. That is, $(C;p_1,p_2,p_3,p_4)\in\cR_1^o$ satisfies $2p_1+2p_2\sim 2p_3+2p_4$ and $2p_3\sim 2p_4$. The hyperelliptic involution $\iota$ is then the involution associated to the pencil $|2p_3|=|2p_4|$. Since the double zeros $p_1$ and $p_2$ are swapped by the involution, the fiber of $p:\cR_1\rightarrow \cH(2,-2)\subset M_{1,2}$ is exactly $\{(p_1,p_2)\,|\,p_1+p_2\sim 2p_3\sim 2p_4\}/\langle\iota\rangle$, which is exactly the linear series $|2p_3|=|2p_4|$. That is, $\cR_1$ is birational to a $\mathbb{P}^1$-bundle over $\cH(2,-2)\cong\mathbb{P}^1$, and thus is rational.
\end{proof}
\begin{corollary}
    The Chow ring of both components of $\cR^u(2,2,-2,-2)$ is generated by boundary divisors.
\end{corollary}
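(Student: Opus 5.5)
The plan is to work on each component of the multi-scale compactification $\overline\cR^u(2,2,-2,-2)$ separately and combine three ingredients: the rationality just proven, excision on Chow along the boundary stratification, and the CKgP machinery of \Cref{subsection:ckgp_intro}. The statement should be read as saying that the rational Chow ring of the compactified component is generated by the fundamental classes of its boundary strata. The interior should then contribute only its fundamental class. So the core of the argument is to show that each component has the CKgP, and that each open boundary stratum has Chow ring $\mathbb{Q}$ generated by its fundamental class.

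For a component $\cR$ (one of $\cR_2^u$, or the $S_2$-quotient of $\cR_1$), I would argue as follows. The compactification is a smooth proper two-dimensional Deligne--Mumford stack. Its coarse space is a projective surface with quotient singularities, which is rational by the preceding proposition. By feature (v) and the Remark that rational surfaces have the CKgP (resolve, then stratify into an open subset of $\mathbb{P}^2$ and rational curves), $\cR$ has the CKgP. By feature (vii), the cycle class map $CH^*(\cR)\to H^*(\cR)$ is then an isomorphism and $H^{odd}=0$. Since the space is a rational surface, $CH^0=\mathbb{Q}$ and $CH^2=\mathbb{Q}$, the latter generated by the class of any point, for instance a $0$-dimensional boundary stratum. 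It therefore remains to show that $CH^1\cong H^2$ is spanned by the boundary divisors.

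For $CH^1$ I would use the excision sequence
\[CH_1(\partial\cR)\to CH^1(\cR)\to CH^1(\cR^o)\to 0.\]
Since $CH_1(\partial\cR)$ is spanned by the boundary components, it suffices to show $\mathrm{Pic}_{\mathbb{Q}}(\cR^o)=0$. For $\cR_1$ this comes from its structure as a $\mathbb{P}^1$-bundle over $\cH(2,-2)$. The interior is an open subset of the linear series $|2p_3|$, fibred over an open subset of $\mathbb{P}^1$. After removing the points where the zeros collide with the poles or with each other (which are in the boundary), the fibres become open subsets of $\mathbb{A}^1$ with at least one point removed. So the interior is an open subset of an affine bundle over an open curve in $\mathbb{P}^1$, and excision and feature (iv) give Chow ring $\mathbb{Q}$. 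Descending to the $S_2$-quotient is harmless with rational coefficients. For $\cR_2^u$, I would use that $p$ is generically one-to-one onto $\overline{M}_{1,2}/S_2$, so the interior is birational to an open subset of $M_{1,2}/S_2$. The plan is to identify a dense open $U\subset\cR_2^u$, disjoint from the boundary, mapping isomorphically onto an open subset of $M_{1,2}/S_2$, and then argue as follows. The rational Picard group of $M_{1,2}$ is trivial, since $\mathrm{Pic}_{\mathbb{Q}}(\overline{M}_{1,n})$ is generated by boundary, as used in \Cref{prop:h200-2}. Consequently $\mathrm{Pic}_{\mathbb{Q}}(U)=0$, and $\cR_2^o\setminus U$ consists of finitely many curves, whose classes are then added to the generating set.

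The main obstacle is this last point. The curves in $\cR_2^o\setminus U$ (where $p$ fails to be an isomorphism, e.g.\ where a double zero hits a Weierstra{\ss}-type configuration) are a priori not boundary strata. I would try to show their classes are pulled back from $M_{1,2}/S_2$, as in the argument in \Cref{prop:h200-2} where divisor classes were identified with pullbacks of $[\cH(1,1,-1,-1)]$, which vanish in $\mathrm{Pic}_{\mathbb{Q}}(M_{1,n})$. If that fails, I would instead compare ranks directly: compute $\chi(\overline\cR)$ (via \cite{LT}) to get $b_2=\chi-2$, and check that the listed boundary divisors, including $D_h$ and the divisor $D_\Gamma^2$ shown above, span a subspace of that rank using their intersection matrix.
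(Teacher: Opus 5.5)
\textbf{Rotation number $1$.} Your treatment of this component is essentially the paper's. The interior is an open part of the $\mathbb{P}^1$-bundle $|2p_3|$ over $\cH(2,-2)$, so its rational Chow ring is $\mathbb{Q}$, and excision finishes. The CKgP and cycle-class detour is not needed for this statement.

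\textbf{Rotation number $2$: the gap.} The gap is in the component $\cR_2^u$, at exactly the step you call the main obstacle and leave open. The missing fact is that $(\cR_2^u)^o\to M_{1,2}/S_2$ is \emph{injective}. Being birational onto a normal target, it is then an open immersion, so there are no $p$-contracted curves in the interior at all. Consequently $CH^*((\cR_2^u)^o)$ is a quotient of $CH^*(M_{1,2}/S_2)=\mathbb{Q}$, and excision gives the claim.

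The paper gets injectivity from a one-line residue computation:
\begin{itemize}
\item $\cR_2^u$ consists of squares, so it sits inside $\overline\cH(1,1,-1,-1)/S_2$.
\item The fibre of $\overline\cH(1,1,-1,-1)/S_2$ over $(E;q_1+q_2)$ is the pencil $|q_1+q_2|$. Concretely, these are the differentials $(f+c)^2dz$, $c\in\mathbb{C}$, for a fixed $f\in H^0(E,\cO_E(q_1+q_2))$ with simple poles at $q_1,q_2$.
\item If $f^2dz$ is residueless, then $\res_{q_i}(f+c)^2dz=2c\,\res_{q_i}f\,dz$. This is nonzero for $c\neq 0$, since $f$ has a simple pole at $q_i$.
\end{itemize}
So each fibre contains at most one point of $\cR_2^u$.

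\textbf{Why your fallbacks do not close the gap.} First, a $p$-contracted curve maps to a point, not to a divisor. So it is not cut out by pulling back a divisor class from $M_{1,2}/S_2$, and the mechanism of Proposition~\ref{prop:h200-2} does not apply. Indeed, if such an interior curve existed, its class could well lie outside the span of the boundary, as happens for the exceptional curve of a blow-up at an interior point. So the argument has to show such curves do not exist, rather than try to express their classes. Second, the rank comparison via $\chi(\overline\cR)$ and an intersection matrix would require listing all boundary curves of $\overline\cR_2^u$ and computing their intersection and self-intersection numbers. You do not carry any of this out. The pencil argument above rules out contracted curves directly and is the step your proposal needs.
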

\begin{proof}
    This is the consequence of the rational parameterization constructed in the preceding proof. For the component of rotation number 1, the interior $\cR_1^o\subset\cR_1$ is an open subset of the $\mathbb{P}^1$-bundle over $\cH(2,-2)$, so it has trivial Chow ring generated by the fundamental class. Consequently, $CH^*(\cR_1)$ is generated by the components of the boundary divisors.

    For the component of rotation number 2, the interior $(\cR_2^u)^o\subset \cR_2^u$ admits a generically one-to-one morphism to $M_{1,2}/S_2$. So, it suffices to prove that it has no exceptional divisor, so that $(\cR_2^u)^o$ embeds into $M_{1,2}/S_2$ as an open subset. Note that $\cR_2^u\subset \overline\cH(1,1,-1,-1)/S_2$ and the fiber over the point $(E;q_1+q_2)$ of the morphism $\overline\cH(1,1,-1,-1)/S_2\rightarrow \overline{M}_{1,2}/S_2$ is isomorphic to the pencil $|q_1+q_2|$. The global sections of the invertible sheaf $\mathcal{O}_E(q_1+q_2)$ can be viewed as meromorphic functions on $E$ with at worst simple poles at $q_1$ and $q_2$ but holomorphic otherwise. So take a meromorphic function $f\in H^0(E,\mathcal{O}_E(q_1+q_2))$ with simple poles at $q_1$ and $q_2$ such that $f^2dz$ is a differential of second kind (i.e. poles are residueless). Then $(f+c)^2dz = f^2dz+2cfdz+c^2dz$ cannot have residueless poles unless $c=0$, so the morphism $(\cR_2^u)^o\rightarrow M_{1,2}/S_2$ is injective. Then the conclusion on Chow ring of $\cR_2^u$ follows since the Chow ring of $\overline{M}_{1,2}$ is generated by the boundary divisors.
    \end{proof}

\subsection*{Certifying CKgP} With this, we see that all the vertices appearing in the each of the divisorial level graphs represent a lower genus stratum of differentials that has the CKgP. Same can be seen to be true also for higher codimension level graphs (as argued before, we only have to look up to codimension 2, and these level graphs have been included in the appendix, particularly in Figure~\ref{fig:h4}, Figure~\ref{fig:h13} and Figure~\ref{fig:h22}). Thus, the situation is quite parallel to that of $\mu=(4)$ case: the only divisorial level graph with at least two non-trivial prong equivalence classes is $\Gamma_2$, and this causes $D_{\Gamma_2}$ to have two distinct components, one in each of $\overline\cH^{h}(2,2)$ and $\overline\cH^{nh}(2,2)$. So, we can argue as in the $\mu=(4)$ case to show that all the vertical boundary strata to show that $\overline\cH(2,2)$ also has the CKgP. Indeed, for $i=7,8,12,13$, the argument will involve the application of \Cref{thm:ckgp_fibration}, whereas for the remainig boundary strata, they will be finite covered by the product of the generalized strata for each levels.

The horizontal boundary divisor $D_h$ of $\overline\cH^{nh}(2,2)$, on the other hand, is finitely covered by $\overline\cH^{nh}(2,2,-1,-1)/S_2$, which, in turn, admits a birational morphism onto $\overline\cM_{2,2}$. So, showing $D_h$ has the CKgP amounts to analyzing the exceptional divisors of this birational morphism, as in $\mu=(4)$ case.

As in $\mu=(4)$ case, the hyperelliptic component is easier to analyze because the image of $\overline\cH^h(2,2)$ in $\overline\cM_{3,2}$ is finitely covered by $\overline{M}_{0,9}$. Thus we have:
\begin{proposition}
    Both components of the compactified stratum $\overline\cH(2,2)$ have the CKgP and each of their Chow (as well as the cohomology) ring is tautological, generated by the boundary strata.
\end{proposition}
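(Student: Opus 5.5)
The plan is to run the argument of \Cref{prop:chow_ring_mu_(4)}, using the three stratifications already set up in this section. We treat the two components separately.

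\emph{Hyperelliptic component.} For $\overline\cH^h(2,2)$, consider the coarse morphism $\overline{M}_{0,9}\rightarrow\overline{M}_{3,2}$. It takes $(R;p_1,\ldots,p_9)$ to the admissible double cover branched over $p_1,\ldots,p_8$, marked with the two preimages of $p_9$ (which are hyperelliptic conjugates). Its image is the image of $\overline\cH^h(2,2)$. Since $\overline{M}_{0,9}$ has the CKgP, feature (ii) shows that this image has the CKgP. The exceptional divisors of $\overline\cH^h(2,2)\rightarrow\overline\cM_{3,2}$ are handled with \Cref{thm:ckgp_fibration} via \Cref{rem:ckgp_bdry}, stratifying so that the fibers are irreducible. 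By feature (iii), $\overline\cH^h(2,2)$ then has the CKgP.

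\emph{Non-hyperelliptic component.} We stratify $\overline\cH^{nh}(2,2)$ into the interior and the open boundary strata $D_\Gamma^o$. The interior has the CKgP, with Chow ring $\mathbb{Q}$, by \cite{chen_gorensteinsingularity}. The vertical strata are treated next.

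By \Cref{rem:codimension_up_to_two}, every vertex of every vertical level graph already occurs in a graph with at most three levels. These graphs are listed in the divisor enumeration above and in the appendix. Every vertex stratum appearing there has the CKgP, with Chow ring generated by fundamental classes of components. This uses the results of \Cref{sec:genustwo} and \Cref{sec:genusone}, the rationality of $\cR^u(2,2,-2,-2)$ and of $\cH(2,2\mid-2\mid-2,-2)$, and the statement that $\cM_{1,n}$ and $\cH(2,0)$, $\cH(1,1,0)$, $\cH(2,0,0)$ have the CKgP.

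For graphs with no long edges of enhancement $\ge 2$, the prong-matching classes do not multiply. So $p_\Gamma^o$, $c_\Gamma^o$ give a finite cover of $D_\Gamma^o$ by a product of level strata, and features (ii) and (vi) give the CKgP. For $\Gamma_7,\Gamma_8,\Gamma_{12},\Gamma_{13}$ and their degenerations, which are exceptional for $\overline\cH\rightarrow\overline\cM_{3,2}$, we instead apply \Cref{thm:ckgp_fibration} over the image strata. We stratify as was done for $D_{\Gamma_8}$ in $\overline\cH(4)$ wherever fibers become reducible.

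\emph{Horizontal boundary.} The divisor $D_h$ is the finite image of $\tilde D_h=\overline\cH^{nh}(2,2,-1,-1)/S_2$. We factor $\tilde D_h\rightarrow X_h\rightarrow\overline\cM_{2,2}$: the first map is the normalized forgetful map to $\overline\cM_{2,4}/S_2$, and the second forgets the poles. Since $\overline\cM_{2,2}$ has the CKgP, the $g$-exceptional loci are images of boundary strata, which are finitely covered by products of lower strata. The $f$-exceptional loci are handled by \Cref{thm:ckgp_fibration}. Together these give the CKgP for $\tilde D_h$, hence for $D_h$ by feature (ii). Feature (iii) then gives the CKgP for $\overline\cH^{nh}(2,2)$.

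\emph{Chow and cohomology rings.} By \Cref{rem:ckgp_bdry_chow} and the case analysis above, each open stratum has Chow ring generated by fundamental classes of its components. Excision then shows that $CH^*$ is generated by boundary-stratum classes, so it is tautological. Feature (vii) identifies $CH^*$ with $H^*$.

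The main obstacle is the horizontal boundary. We must enumerate the $f$-exceptional divisors of $\tilde D_h$ (the analogues of \Cref{fig:f_exceptional_mu_4} with two double zeros) and check that each image product stratum has the CKgP. We must also check, stratifying where necessary, that the fibers are irreducible so that \Cref{thm:ckgp_fibration} applies. I would do this first by listing the graphs with a genus-zero vertex carrying both simple poles.
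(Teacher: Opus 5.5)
Your outline follows the paper's proof step for step: $\overline{M}_{0,9}$ for the hyperelliptic component, level-wise products for most vertical strata and \Cref{thm:ckgp_fibration} for $\Gamma_7,\Gamma_8,\Gamma_{12},\Gamma_{13}$, the horizontal divisor through $\overline\cH^{nh}(2,2,-1,-1)/S_2\to\overline\cM_{2,2}$, and finally excision with feature (vii). One step, however, does not go through as you wrote it.

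\textbf{The divisor $\Gamma_2$.} You argue that for graphs with no long edges of enhancement $\ge 2$ the prong-matching classes ``do not multiply''. From this you conclude that $p^o_\Gamma, c^o_\Gamma$ cover $D^o_\Gamma$ by a product of level strata. That criterion only controls whether the number of classes grows under degeneration. It says nothing about the divisor $\Gamma_2$ itself.
\begin{itemize}
\item In $\Gamma_2$ both edges have $\kappa_e=2$, and the level rotation acts diagonally on $\mathbb{Z}/2\times\mathbb{Z}/2$.
\item So there are already $K_{\Gamma_2}/[R_{\Gamma_2}:Tw_{\Gamma_2}]=2$ classes, and $p^o_{\Gamma_2}\colon U^o_{\Gamma_2}\to B^o_{\Gamma_2}$ is an \'etale double cover of $\cH(1,1)\times\cH(2,2,-3,-3)$.
\item The same two classes persist on the degenerations $\Gamma_{2,j}$ listed in the appendix.
\item If this cover did not split, $U^o_{\Gamma_2}$ would not be a product. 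The CKgP of $B^o_{\Gamma_2}$ would then not transfer to it, since feature (ii) only passes the CKgP from a cover down to its image.
\end{itemize}

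\textbf{How the paper closes this.} As in the $\mu=(4)$ case, the paper observes that the two prong-matching classes give two different components of $D_{\Gamma_2}$: one in $\overline\cH^{h}(2,2)$ and one in $\overline\cH^{nh}(2,2)$. Hence on each component of $\overline\cH(2,2)$ the map $p^o_{\Gamma_2}$ has degree one, and your product description holds.

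This is a genuine input, not a formality. In $\overline\cH(3,1)$ the two classes do not separate and $D^o_{\Gamma_2}$ is connected. There the paper needs a separate lemma to show the double cover is still $\cH(1,1)\times C$ with $C$ rational, using Stein factorization of $U^*_{\Gamma_2}\to\cH(3,1,-3,-3)$, Boissy's connectedness result and Riemann--Hurwitz. Once you state and justify the splitting for $\Gamma_2$ and its degenerations, your argument agrees with the paper's.
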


\section{The stratum $\overline{\mathcal{H}}(3,1)$} \label{sec:h31}
In this section, we will prove that the multi-scale compactification $\overline\cH(3,1)$ has the CKgP and the cohomology (as well as the Chow) ring is generated by the boundary strata.

\subsection{Enumerating the boundary divisors}
The boundary divisors of $\overline\cH(3,1)$ are either horizontal boundary $D_h$ or given by one of the following (enhanced) level graphs:
\begin{figure}[H]
  \centering
  \begin{subfigure}{0.2\textwidth}
    \centering
\begin{tikzpicture}[main/.style = {draw, circle}]
    \node[main] at (0,0) {2};
    %\fill[black] (0,0) circle (3pt);
    \fill[black] (0, -1.5) circle (3pt);

    \draw[thick] (0,-.35) to [out=225, in=135] (0, -1.5);
    \draw[thick] (0,-.35) to [out=315, in=45] (0, -1.5);
    \draw (0,-1.5) -- (0.2,-2);
    \draw (0,-1.5) -- (-0.2,-2);

    \draw node at  (0.3,-2.2) {$1$};
    \draw node at  (-0.3,-2.2) {$3$};
    \draw node at  (-0.4,-1) {$1$};
    \draw node at  (0.4,-1) {$3$};
  \end{tikzpicture}
  \caption{$\Gamma_1$}
\end{subfigure}
\begin{subfigure}{0.2\textwidth}
    \centering
\begin{tikzpicture}[main/.style = {draw, circle}]
    \node[main] at (0,0) {2};
    %\fill[black] (0,0) circle (3pt);
    \fill[black] (0, -1.5) circle (3pt);

    \draw[thick] (0,-.35) to [out=225, in=135] (0, -1.5);
    \draw[thick] (0,-.35) to [out=315, in=45] (0, -1.5);
    \draw (0,-1.5) -- (0.2,-2);
    \draw (0,-1.5) -- (-0.2,-2);

    \draw node at  (0.3,-2.2) {$1$};
    \draw node at  (-0.3,-2.2) {$3$};
    \draw node at  (-0.4,-1) {$2$};
    \draw node at  (0.4,-1) {$2$};
  \end{tikzpicture}
  \caption{$\Gamma_2$}
\end{subfigure}
\begin{subfigure}{0.2\textwidth}
    \centering
\begin{tikzpicture}[main/.style = {draw, circle}]
    \node[main] at (0,0) {1};
    %\fill[black] (0,0) circle (3pt);
    \node[main] at (0,-1.85) {1};

    \draw[thick] (0,-.35) to [out=225, in=135] (0, -1.5);
    \draw[thick] (0,-.35) to [out=315, in=45] (0, -1.5);
    \draw (0,-2.2) -- (0.2,-2.7);
    \draw (0,-2.2) -- (-0.2,-2.7);

    \draw node at  (0.3,-2.9) {$1$};
    \draw node at  (-0.3,-2.9) {$3$};
    \draw node at  (-0.4,-1) {$1$};
    \draw node at  (0.4,-1) {$1$};
  \end{tikzpicture}
  \caption{$\Gamma_3$}
\end{subfigure}
\begin{subfigure}{0.2\textwidth}
    \centering
\begin{tikzpicture}[main/.style = {draw, circle}]
    \node[main] at (0,0) {1};
    %\fill[black] (0,0) circle (3pt);
    \fill[black] (0, -1.5) circle (3pt);

    \draw[thick] (0,-.35) to [out=210, in=150] (0, -1.5);
    \draw[thick] (0,-.35) to [out=330, in=30] (0, -1.5);
    \draw[thick] (0,-.35) to [out=270, in=90] (0, -1.5);
    \draw (0,-1.5) -- (0.2,-2);
    \draw (0,-1.5) -- (-0.2,-2);

    \draw node at  (0.3,-2.2) {$1$};
    \draw node at  (-0.3,-2.2) {$3$};
    \draw node at  (-0.5,-1) {$1$};
    \draw node at  (-0.1,-1) {$1$};
    \draw node at  (0.5,-1) {$1$};
  \end{tikzpicture}
  \caption{$\Gamma_4$}
\end{subfigure}
\begin{subfigure}{0.2\textwidth}
    \centering
\begin{tikzpicture}[main/.style = {draw, circle}]
    \node[main] at (0,0) {2};
    %\fill[black] (0,0) circle (3pt);
    \node[main] at (0,-1.85) {1};

    \draw[thick] (0,-.35) to [out=270, in=90] (0, -1.5);
    \draw (0,-2.2) -- (0.2,-2.7);
    \draw (0,-2.2) -- (-0.2,-2.7);

    \draw node at  (0.3,-2.9) {$1$};
    \draw node at  (-0.3,-2.9) {$3$};
    \draw node at  (0.1,-1) {$3$};
  \end{tikzpicture}
  \caption{$\Gamma_5$}
\end{subfigure}
\begin{subfigure}{0.2\textwidth}
    \centering
\begin{tikzpicture}[main/.style = {draw, circle}]
    \node[main] at (0,0) {1};
    %\fill[black] (0,0) circle (3pt);
    \node[main] at (0,-1.85) {2};

    \draw[thick] (0,-.35) to [out=270, in=90] (0, -1.5);
    \draw (0,-2.2) -- (0.2,-2.7);
    \draw (0,-2.2) -- (-0.2,-2.7);

    \draw node at  (0.3,-2.9) {$1$};
    \draw node at  (-0.3,-2.9) {$3$};
    \draw node at  (0.1,-1) {$1$};
  \end{tikzpicture}
  \caption{$\Gamma_6$}
\end{subfigure}
\begin{subfigure}{0.2\textwidth}
    \centering
\begin{tikzpicture}[main/.style = {draw, circle}]
    \node[main] at (0,0) {1};
    %\fill[black] (0,0) circle (3pt);
    \node[main] at (-.8,1.85) {1};
    \node[main] at (.8,1.85) {1};

    \draw[thick] (0,.35) to [out=125, in=-55] (-.8, 1.5);
    \draw[thick] (0,.35) to [out=55, in=-125] (.8, 1.5);
    \draw (0,-.35) -- (0.2,-.8);
    \draw (0,-.35) -- (-0.2,-.8);

    \draw node at  (0.3,-1) {$1$};
    \draw node at  (-0.3,-1) {$3$};
    \draw node at  (-0.6,1) {$1$};
    \draw node at  (0.6,1) {$1$};
  \end{tikzpicture}
  \caption{$\Gamma_7$}
\end{subfigure}
\begin{subfigure}{0.2\textwidth}
    \centering
\begin{tikzpicture}[main/.style = {draw, circle}]
    %\node[main] at (0,0) {1};
    \fill[black] (0,0) circle (3pt);
    \node[main] at (-.8,1.85) {1};
    \node[main] at (.8,1.85) {1};

    \draw[thick] (0,0) to [out=120, in=-60] (-.8, 1.5);
    \draw[thick] (0,0) to [out=70, in=200] (.8, 1.5);
    \draw[thick] (0,0) to [out=30, in=-90] (.8, 1.5);
    \draw (0,0) -- (0.2,-.5);
    \draw (0,0) -- (-0.2,-.5);

    \draw node at  (0.3,-.7) {$1$};
    \draw node at  (-0.3,-.7) {$3$};
    \draw node at  (-0.6,0.9) {$1$};
    \draw node at  (0.8,0.8) {$1$};
    \draw node at  (0.2,1) {$1$};
  \end{tikzpicture}
  \caption{$\Gamma_8$}
\end{subfigure}
\begin{subfigure}{0.2\textwidth}
    \centering
\begin{tikzpicture}[main/.style = {draw, circle}]
    \node[main] at (0,0) {3};
    %\fill[black] (0,0) circle (3pt);
    \fill[black] (0, -1.5) circle (3pt);

    \draw[thick] (0,-.35) to [out=270, in=90] (0, -1.5);
    \draw (0,-1.5) -- (0.2,-2);
    \draw (0,-1.5) -- (-0.2,-2);

    \draw node at  (0.3,-2.2) {$1$};
    \draw node at  (-0.3,-2.2) {$3$};
    \draw node at  (-0.1,-1) {$5$};
  \end{tikzpicture}
  \caption{$\Gamma_{9}$}
\end{subfigure}
\begin{subfigure}{0.2\textwidth}
    \centering
\begin{tikzpicture}[main/.style = {draw, circle}]
    \node[main] at (0,0) {2};
    %\fill[black] (0,0) circle (3pt);
    \node[main] at (0,-1.85) {1};

    \draw[thick] (0,-.35) to [out=270, in=90] (0, -1.5);
    \draw (0,-2.2) -- (0,-2.7);
    \draw (0,0.35) -- (0,0.85);

    \draw node at  (0,-2.9) {$3$};
    \draw node at  (0.1,1) {$1$};
    \draw node at  (0.1,-1) {$2$};
  \end{tikzpicture}
  \caption{$\Gamma_{10}$}
\end{subfigure}
\begin{subfigure}{0.2\textwidth}
    \centering
\begin{tikzpicture}[main/.style = {draw, circle}]
    \node[main] at (0,0) {2};
    %\fill[black] (0,0) circle (3pt);
    \fill[black] (0, -1.5) circle (3pt);

    \draw[thick] (0,-.35) to [out=225, in=135] (0, -1.5);
    \draw[thick] (0,-.35) to [out=315, in=45] (0, -1.5);
    \draw (0,-1.5) -- (0,-2);
    \draw (0,0.35) -- (0,0.85);

    \draw node at  (0,-2.2) {$3$};
    \draw node at  (0.1,1) {$1$};
    \draw node at  (-0.4,-1) {$2$};
    \draw node at  (0.4,-1) {$1$};
  \end{tikzpicture}
  \caption{$\Gamma_{11}$}
\end{subfigure}
\begin{subfigure}{0.2\textwidth}
    \centering
\begin{tikzpicture}[main/.style = {draw, circle}]
    %\node[main] at (0,0) {1};
    \fill[black] (0,0) circle (3pt);
    \node[main] at (-.8,1.85) {1};
    \node[main] at (.8,1.85) {2};

    \draw[thick] (0,0) to [out=120, in=-60] (-.8, 1.5);
    \draw[thick] (0,0) to [out=60, in=-120] (.8, 1.5);
    \draw (0,0) -- (0.2,-.5);
    \draw (0,0) -- (-0.2,-.5);

    \draw node at  (0.3,-.7) {$1$};
    \draw node at  (-0.3,-.7) {$3$};
    \draw node at  (-0.6,0.9) {$1$};
    \draw node at  (0.6,0.9) {$3$};
  \end{tikzpicture}
  \caption{$\Gamma_{12}$}
\end{subfigure}
\caption{Vertical divisors in $\overline\cH(3,1)$}
\label{fig:vertical_graphs_mu_(31)}
\end{figure}
\begin{remark}
    The genus zero residueless stratum $\cR(3,1,-2,-2,-2)$ is empty. This can be seen, e.g., by writing the differential as $\omega = \frac{(z-a)(z-b)^3}{z^2(z-1)^2}dz$ and imposing the conditions of vanishing residue at the double poles 0 and $\infty$. Analyzing the resulting equations in $a,b$, we would then see that all such differentials will be contained in the boundary of $\overline\cH(3,1,-2,-2,-2)$, thereby illustrating our claim. As a result, the enhanced level graph with three elliptic tails in the top level and the stratum $\cR(3,1,-2,-2,-2)$ in the bottom level does not appear in the boundary of $\overline\cH(3,1)$.
\end{remark}
Note that $D_{\Gamma_9}$ is the closure of the natural inclusion of $\cH(4)$ in $\overline\cH(3,1)$, such that  $D_{\Gamma_i}$ for $i\le 8$ intersect $D_{\Gamma_9}$ exactly along the corresponding boundary divisors in \Cref{fig:vertical_graphs_mu_(4)}.

As in the previous section, we now discuss the level-wise product model for (the interior of) each of the boundary divisors in $\overline\cH(3,1)$.
\begin{enumerate}
\item $D_h^o\sim \cH(3,1,-1,-1)/S_2$ is the horizontal divisor. It is irreducible.
\item $D_{\Gamma_1}^o\sim \cH(2,0)\times \cH(3,1,-2,-4)$. The prong-matchings on each compatible collection of twisted differentials belong to the unique equivalence class, so $D_{\Gamma_1}$ is irreducible.
\item $D_{\Gamma_2}^o$ is related to $(\cH(1,1)\times\cH(3,1,-3,-3))/S_2$ via finite correspondence. Each comatible collection of twisted differentials has two equivalence classes of prong-matchings. However, $D_{\Gamma_2}^o$ remains connected (so the finite correspondence is non-trivial).
  \item $D_{\Gamma_3}^o\sim (\cH(0,0)\times\cH(3,1,-2,-2))/S_2$ (where $S_2\cong\text{Aut}\,(\Gamma_3)$) is irreducible. Of course, forgetting the simple zero defines an open immersion of $\cH(3,1,-2,-2)$ into $\cM_{1,3}$, so $\cH(3,1,-2,-2)$ has the CKgP.
\item $D_{\Gamma_4}^o\sim (\cH(0,0,0)\times \cH(3,1,-2,-2,-2))/S_3$ (where $S_3\cong\text{Aut}\,(\Gamma_4)$) is irreducible.
\item $D_{\Gamma_5}^o\sim \cH(2)\times \cH(3,1,-4)$ is also irreducible. Again, forgetting the simple zero defines an open immersion $\cH(3,1,-4)\subset \cM_{1,2}$.
\item $D_{\Gamma_6}^o\sim \cH(0)\times \cH(3,1,-2)$ is irreducible.
\item $D_{\Gamma_7}^o\sim (\bP(\Omega\cM_{1,1}\times\Omega\cM_{1,1})\times \cR(3,1,-2,-2))/S_2$ (where $S_2\cong\text{Aut}\,(\Gamma_7)$). It is irreducible. That $\cR(3,1,-2,-2)/S_2$ has the CKgP will be show below.
\item $D_{\Gamma_8}^o\sim \bP(\Omega\cM_{1,1}\times\Omega\cM_{1,2})\times \cH(3,1\mid-2\mid-2,-2)/S_2$ (where $S_2\cong\text{Aut}\,(\Gamma_8)$). This divisor is also irreducible. That $\cH(3,1\mid-2\mid-2,-2)$ has the CKgP will be shown below.
\item $D_{\Gamma_9}^o\sim \cH(4)\times \cH(3,1,-6)\sim \cH(4)$. Since $\cH(4)$ has two components distinguished by hyperellipticity, $D_{\Gamma_9}$ also has two components.
\item $D_{\Gamma_{10}}^o\sim \cH(1,1)\times \cH(3,-3)$ is irreducible. Of course $\cH(3,-3)$ is the classical modular curve $X_1(3)$, which is rational.
\item $D_{\Gamma_{11}}^o\sim \cH(1,1,0)\times \cH(3,-2,-3)$ is also irreducible.
\item $D_{\Gamma_{12}}^o\sim (\bP(\Omega\cM_{1,1}\times\Omega\cM_{2,1}(2))\times \cR(3,1,-2,-4))$. This divisor is irreducible. Note that $\cR(3,1,-2,-4)$ is 0-dimensional.
\end{enumerate}
\subsection*{Strata with residue conditions}
From above enumeration, we see that the only vertices of the vertical divisor that we have not yet shown to have the CKgP are $\cR(3,1,-2,-2)/S_2$ and $\cH(3,1\mid -2\mid -2,-2)$, which we do now. The latter is one-dimensional, so can be dealt with easily:
\begin{proposition}
    The stratum $\cH(3,1 \mid -2 \mid -2,-2)$ is a rational curve.     
\end{proposition}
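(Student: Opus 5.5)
Following the proof for $\cH(2,2\mid -2\mid -2,-2)$, I would write down the differentials explicitly on $\mathbb{P}^1$. The stratum $\cH(3,1\mid-2\mid-2,-2)$ is genus zero with a zero of order $3$, a zero of order $1$, and three double poles. The first double pole is residueless, and the other two have residues summing to zero, which is automatic by the residue theorem. I would use the $PGL_2$-action to place the three double poles at $0,1,\infty$, with the residueless pole at $z=0$. The differential is then
\[\omega=\frac{(z-a)^3(z-b)}{z^2(z-1)^2}\,dz,\]
where $a\neq b$ and $a,b\in\mathbb{P}^1\setminus\{0,1,\infty\}$. The order at $\infty$ is $4-4-2=-2$, as required. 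Since the marked poles are labelled and the only automorphisms of $\mathbb{P}^1$ fixing $0,1,\infty$ are trivial, the projectivized stratum is identified with the locus of such pairs $(a,b)$ satisfying the residue condition. (If the two non-residueless poles are unordered, I would also quotient by the involution swapping $1$ and $\infty$.)

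Next I would compute $\operatorname{res}_{z=0}\omega$. Writing $g(z)=(z-a)^3(z-b)(z-1)^{-2}$, the residue is $g'(0)$. We have $g(0)=a^3b$ and
\[\frac{g'}{g}(0)=\frac{3}{a}+\frac{1}{b}+2,\]
so the residue condition becomes
\[a^3b\left(\frac{3}{a}+\frac1b+2\right)=0,\]
that is, $3a^2b+a^3+2a^3b=0$. Dividing by $a^2\neq0$ gives $a+3b+2ab=0$, so
\[b=\frac{-a}{3+2a}.\]
This exhibits the stratum as an open subset of the graph of a rational function of $a$, hence as an open subset of a smooth conic, which is isomorphic to an open subset of $\mathbb{P}^1$. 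If the $S_2$ quotient is taken, the result is still the quotient of a rational curve, hence rational.

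The only care needed is to check that the open conditions leave a nonempty set, and hence a dense subset of the conic. These conditions are $a,b\notin\{0,1,\infty\}$ and $a\neq b$. Here $b=0$ forces $a=0$, $b=1$ forces $a=-1$, $a=b$ forces $a=-1$ or $a=0$, and $b=\infty$ forces $a=-3/2$. So removing finitely many values of $a$ leaves a nonempty open subset, and the stratum is a rational curve. This step is not a real obstacle; the main risk is an arithmetic slip in the residue computation. As a consequence, this vertex has the CKgP by features (i) and (v) of CKgP, and its Chow ring is generated by its fundamental class, since an open subset of $\mathbb{P}^1$ has trivial rational Picard group.
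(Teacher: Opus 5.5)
Your approach is the same as the paper's: put the three double poles at $0,1,\infty$ with the residueless one at $0$, impose $\res_{z=0}\omega=0$, and identify the locus with a dense open subset of a smooth conic. The conclusion is right, but the residue computation has a sign error. Since $\frac{3}{z-a}$ evaluates to $-\frac{3}{a}$ at $z=0$, and $\frac{1}{z-b}$ to $-\frac{1}{b}$, the correct value is
\[\frac{g'}{g}(0)=2-\frac{3}{a}-\frac{1}{b}.\]
So the residue condition is $2ab-a-3b=0$, i.e.\ $b=\frac{a}{2a-3}$, not $a+3b+2ab=0$. This is the paper's conic $V(2ab-b-3a)$ with the two zeros renamed; the paper puts the triple zero at $b$. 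Two checks:
\begin{itemize}
\item $a=-1,\ b=\tfrac15$ gives a genuinely residueless pole at $0$.
\item Your point $a=2,\ b=-\tfrac27$ gives residue $-\tfrac{64}{7}$ at $0$.
\end{itemize}

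The conclusion survives because your curve is the image of the correct one under $(a,b)\mapsto(-a,-b)$. It is therefore still the graph of a M\"obius transformation, hence a smooth rational curve. The excluded points, however, must be recomputed: $b=1\iff a=3$, $b=\infty\iff a=\tfrac32$, $b=0\iff a=0$, and $a=b\iff a\in\{0,2\}$. The stratum (with the poles at $1$ and $\infty$ ordered) is thus $\mathbb{P}^1$ minus the six values $a\in\{0,1,\infty,\tfrac32,2,3\}$. Separately, even within your own equation, $a=b$ would force $a\in\{0,-2\}$, not $a=-1$.
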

\begin{proof}
    A differential in $\cH(3,1 \mid -2 \mid -2,-2)$ can be written as \[\omega=\frac{(z-a)(z-b)^3}{z^2(z-1)^2}dz\] for distinct $a,b\in \mathbb{P}^1\setminus\{0,\infty,1\}$. At the pole $z=0$, \[\operatorname{res}_{z=0}\omega=ab^3\left(2-\frac{1}{a}-\frac{3}{b}\right) = 0.\]

    So $\cH(3,1 \mid -2 \mid -2,-2)$ is isomorphic to a dense subset of the plane conic curve $V(2ab-b-3a)\subset \operatorname{Spec}\mathbb{C}[a,b]$. 
\end{proof}
For $\cR(3,1,-2,-2)/S_2$, it requires a bit more work. Let $\cR^{u}(3,1,-2,-2)\coloneqq \cR(3,1,-2,-2)/S_2$ be the stratum without labeling of the poles and $\overline\cR^u(3,1,-2,-2)$ its multi-scale compactification. For brevity, we will use the notation $\cR^u\coloneqq \cR^u(3,1,-2,-2)$ and $\overline\cR^u\coloneqq \overline\cR^u(3,1,-2,-2)$. We first check that the horizontal divisor $D_h\subset \overline\cR^u$ is a smooth rational curve using an argument similar to preceding proposition:
\begin{lemma}
    The horizontal divisor $D_h$ of $\overline\cR^u$ is a smooth rational curve.
\end{lemma}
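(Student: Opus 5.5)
The plan is to mirror the computation used for $D_h^2$ in the $\cR^u(2,2,-2,-2)$ case. Pinching a nonseparating curve on a genus one residueless differential of type $(3,1,-2,-2)$ gives a genus zero differential with two additional simple poles whose residues sum to zero. The open part $D_h^o$ of the horizontal divisor is therefore birational to (an open subset of) the generalized stratum $\cH(3,1\,|-2\,|\,-2\,|\,-1,-1)/(S_2\times S_2)$. Here the first $S_2$ swaps the double poles, the second swaps the simple poles, and both double poles are required to be residueless. The residue-sum condition at the simple poles is automatic by the residue theorem once the two double-pole residues vanish. So it suffices to compute this genus zero locus explicitly and check that its multi-scale closure is a smooth rational curve.

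First normalize coordinates. Put the double poles at $0,\infty$ and the simple poles at $1$ and $a\in\mathbb{P}^1\setminus\{0,1,\infty\}$, and write
\[\omega=\frac{(z-b)^3(z-c)}{z^2(z-1)(z-a)}\,dz\]
with $b,c$ distinct from each other and from $0,1,a,\infty$. Next impose $\operatorname{res}_{z=0}\omega=0$ and $\operatorname{res}_{z=\infty}\omega=0$. These are two polynomial equations in $(a,b,c)$:
\begin{itemize}
\item the condition at $0$ comes from the linear coefficient of $(z-b)^3(z-c)/((z-1)(z-a))$ at $z=0$;
\item the condition at $\infty$ comes from the expansion $z^{-2}\cdot z^4(1-\cdots)/z^2$, which yields a linear relation of the form $3b+c=1+a$.
\end{itemize}
Use the second equation to eliminate $c=1+a-3b$, and substitute into the first. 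This leaves a single plane curve in the variables $(a,b)$. Since the differential is determined up to scaling, this curve (minus the degenerate loci where points collide) is $D_h^o$, up to the finite $S_2\times S_2$ action. That action sends $z\mapsto a/z$ when the double poles are swapped, and $z\mapsto z/a$ combined with a relabeling when the simple poles are swapped.

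Then check that this curve is irreducible and rational, either by exhibiting a parametrization or by a genus computation of the quotient; a low-degree plane curve should make this easy to read off. Finally, pass to the closure in $\overline\cR^u$. The divisor $D_h$ is a proper one-dimensional substack, and since $\overline\cR^u$ is smooth with normal crossing boundary, the closed horizontal stratum is normal provided it has no self-intersection. Its coarse space is then the smooth completion of the rational curve found above, hence $\mathbb{P}^1$.

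The main obstacle is this last point. One must rule out a self-intersection of the horizontal divisor, i.e.\ a codimension-two stratum with two horizontal edges, which would make $D_h$ nodal. The other danger is that the quotient by $S_2\times S_2$ introduces a cusp or other singularity. I would handle it by listing the two-horizontal-edge level graphs of type $(3,1,-2,-2)$ in genus one. On such a graph each vertex is genus zero with few special points, and I expect the residueless and GRC conditions to make these strata empty, similar to the emptiness of $\cR(3,1,-2,-2,-2)$ noted in the preceding remark. With no self-intersection, $D_h$ is the normalization of the closure of the rational curve, and the lemma follows.
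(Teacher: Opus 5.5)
Your plan is essentially the paper's argument. You realize $D_h$ as the image of $\overline\cH(3,1\mid -2\mid -2\mid -1,-1)/(S_2\times S_2)$, solve the two residue conditions explicitly to get a rational curve, and obtain smoothness by excluding self-intersections using residuelessness of the double poles. The paper phrases that last step as ruling out horizontal degenerations of the genus-zero stratum.

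The checks you deferred do go through:
\begin{itemize}
\item In your normalization the roots of $3(1+a)b^2-(a^2+10a+1)b+3a(1+a)=0$ satisfy $b_1b_2=a$. So the double-pole swap $b\mapsto a/b$ is the deck involution over the $a$-line. The paper instead puts the triple zero at $1$, which makes $b+c$ and $bc$ of the simple poles functions of $a$.
\item After smoothing level passages, the only configuration with two horizontal edges is a self-loop plus a separating horizontal edge; the two-vertex cycle fails the degree count. Residuelessness rules this configuration out, exactly as in the paper's collision argument.
\end{itemize}
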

\begin{proof}
    Note that $D_h$ is an image of $\overline\cH(3,1\mid -2\mid -2\mid -1,-1)/(S_2\times S_2)$, where the first factor of $S_2$ symmeterizes the two double poles and the second factor of $S_2$ symmeterizes the two simple poles. Then a differential in $\cH(3,1\mid-2\mid-2\mid-1,-1)$ can  be written as:
    \[\omega=\frac{(1-z)^3(a-z)}{z^2(b-z)(c-z)}dz,\]
    for distinct $a,b\in \bP^1\setminus\{0,\infty,1\}$. Since residue at the pole $z=0$ is trivial, we get $\frac{1}{b}+\frac{1}{c}=\frac{1}{a}+3$. Similarly, triviality of residue at the pole $z=\infty$ gives $b+c=a+3$. These two equations are symmetric in the simple poles $b$ and $c$, so forgetting the simple poles yields a one-to-one morphism $\overline\cH(3,1\mid -2\mid -2\mid -1,-1)/(S_2\times S_2)\rightarrow\overline{M}_{1,4}$.

    Consequently, based on the discussion in \cite[\S 5]{CoMoZaEU}, to prove $D_h$ is a smooth rational curve, it suffices to show that it has no self-intersection, that is, $\overline\cH(3,1\mid -2\mid -2\mid -1,-1)/(S_2\times S_2)$ does not have a horizontal degeneration. Such a degeneration can only occur if the triple zero and the two double poles collide (or equivalently, the simple zero and the two simple poles collide), or if the simple zero collides with one of the double poles. But since the double poles are residueless, this forces the simple pole manifesting in the new horizontal edge to be residueless, which is not possible.
\end{proof}
Now, we are ready to prove that $\overline\cR^u$ has the CKgP.

\begin{proposition}
    The stratum $\cR^u(3,1,-2,-2)$ is a rational surface. Moreover, it has the CKgP and its rational Chow ring generated by the boundary divisor classes. 
\end{proposition}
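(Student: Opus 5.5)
We will imitate the rotation-number-2 argument for $\cR^u(2,2,-2,-2)$ given earlier in this section.

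\textbf{Rationality.} Consider the forgetful morphism $p:\overline\cR^u\rightarrow \overline{M}_{1,2}/S_2$ that forgets the triple and simple zeros and keeps the unordered pair of double poles. The stratum $\cR^u$ is two-dimensional: $\dim \cH(3,1,-2,-2)=3$, we impose two residue conditions, and we add one for the rescaling, giving $4-2=2$. So $p$ is generically finite, and the first step is to show it has degree one on coarse spaces. As in the $(2,2)$ case, we compute the degree over a general point of $\delta_{\text{irr}}$.

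\begin{itemize}
\item The $p$-preimage of $\delta_{\text{irr}}$ consists of $D_h$ together with vertical divisors whose bottom level carries the rational nodal curve. The vertical ones are $p$-exceptional, since their top levels are genus-zero residueless pieces that get contracted.
\item Hence the generic fiber over $\delta_{\text{irr}}$ lies in $D_h$, along which $p$ is unramified.
\item By the preceding lemma, $D_h$ is a smooth rational curve. The explicit equations $b+c=a+3$ and $\tfrac1b+\tfrac1c=\tfrac1a+3$ show that, for fixed cross-ratio data of the poles $(0,\infty,b,c)$ with $b,c$ unordered, one can solve for $a$ uniquely. This gives degree one of $D_h\rightarrow\delta_{\text{irr}}$.
\end{itemize}

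Therefore $p$ is birational onto the rational surface $\overline{M}_{1,2}/S_2$, and $\cR^u$ is rational. If instead $p$ turns out not to be dominant, we fall back on the rotation-number-1 style argument. In that case the image lies in a curve such as $\overline\cH(2,-2)$ or $X_1$-type modular curves, and we exhibit $\cR^u$ as birational to a $\mathbb{P}^1$-fibration over that rational curve.

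\textbf{CKgP.} Since $\overline\cR^u$ is a smooth proper rational surface (or a DM stack with such a coarse space), the Remark following the list of CKgP properties gives that it has the CKgP. The open part $\cR^u$ then has the CKgP by feature (i).

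\textbf{Chow ring.} We show that $(\cR^u)^o$ maps injectively and openly into $M_{1,2}/S_2$. Note that $\cR^u\subset \overline\cH(2,2,-1,-1)$-type loci are not directly available, so we use a direct argument instead. For a fixed $(E;\{q_1,q_2\})$, a residueless differential in $\cH(3,1,-2,-2)$ is $\omega=f\,dz$ with $f\in H^0(\cO_E(2q_1+2q_2))$, a four-dimensional space. The condition of vanishing residues at $q_1,q_2$ amounts to one condition, because residues sum to zero. The condition that $f$ has a triple zero imposes two more conditions. Projectivizing leaves finitely many solutions, and degree one then forces uniqueness.

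It follows that the interior has Chow ring $\mathbb{Q}$, as an open subset of $M_{1,2}/S_2$, whose rational Chow ring is trivial in positive degree. By excision, $CH^*(\overline\cR^u)$ is generated by the boundary divisor classes together with the fundamental class. This uses that $\overline{M}_{1,2}$ has Chow ring generated by boundary classes, and that the exceptional curves of $p$ are themselves boundary divisors.

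\textbf{Main obstacle.} The hardest step is the degree-one claim for $p$, in particular verifying that no other component of $p^{-1}(\delta_{\text{irr}})$ is non-exceptional. We will check this by enumerating the non-compact-type level graphs of $\overline\cR^u$ as in the $(2,2)$ case.
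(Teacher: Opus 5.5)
There is a genuine gap. The map $p\colon\overline\cR^u\to\overline{M}_{1,2}/S_2$ is dominant but not birational. So neither your rationality argument nor the open embedding $(\cR^u)^o\subset M_{1,2}/S_2$ that your Chow-ring step relies on is available. You can check this directly with the model the paper uses. For a general $(E;\{0,s\})$ the residueless functions are $F=\alpha+\beta\wp(z)+\gamma\wp(z-s)$, and $F$ has a triple zero at $q$ exactly when $h_s(q)=\wp'(q)\wp''(q-s)-\wp''(q)\wp'(q-s)=0$. This is an elliptic function of $q$ with poles of order $4$ at $q=0$ and at $q=s$, so it has $8$ zeros, distinct for general $s$. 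Each zero gives a point of $\cR^u$:
\begin{itemize}
\item the corresponding $\beta,\gamma$ are both nonzero, because $\wp'$ and $\wp''$ have no common zero;
\item the residual simple zero $2s-3q$ differs from $q$ for general $s$.
\end{itemize}
The only nontrivial automorphism of $(E;\{0,s\})$ is $z\mapsto s-z$, which pairs these solutions. Hence $p$ has degree $4$ on coarse spaces, so ``degree one then forces uniqueness'' is false. Your fallback for non-dominant $p$ does not apply either.

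The error in your boundary analysis is the claim that every vertical component of $p^{-1}(\delta_{\text{irr}})$ is $p$-exceptional. Besides $D_h$, there is a two-level divisor of the following shape:
\begin{itemize}
\item its top level is the one-dimensional stratum $\cR(1,1,0,-2,-2)$, carrying both double poles, the simple zero, and the upper ends of two edges with $\kappa=2$ and $\kappa=1$;
\item its bottom level is the point $\cH(3,-2,-3)$, carrying the triple zero. The global residue condition is automatic there, since the bottom is genus zero with only the two edge poles.
\end{itemize}
Forgetting the zeros contracts the bottom component and merges the two edges into a single node of the top component. The position of that node moves with the top level, so this divisor dominates $\delta_{\text{irr}}$. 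Since $\ell_\Gamma=2$, the plumbing parameters of the two edges are $t^2$ and $t$, so $p$ is ramified to order $3$ along this divisor. Together with $D_h$ this gives $1+3=4$, consistent with the interior count.

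The paper avoids this by projecting to $\overline{\cM}_{1,1}$ instead. The three vertical curves in the fiber over the nodal cubic have negative self-intersection and can be contracted, leaving $D_h$. Hence the general fiber is a smooth rational curve and the surface is rational. Then the function $m=\wp''/\wp'$ and a degree argument show that every fiber over $\cM_{1,1}$ is irreducible. So the interior is a $\mathbb{P}^1$-fibration over $\cM_{1,1}$, and $CH^*(\overline\cR^u)$ is generated by the boundary divisors.
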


\begin{proof}
    The rationality implies the CKgP in this situation. We prove that the Chow ring is generated by boundary divisor classes. Consider the forgetful map $p:\overline{\cR}^u\to \overline{\mathcal{M}}_{1,1}$. We claim that the general fiber of $p$ is a smooth rational curve. 

    The fiber $F=p^{-1}(\infty)$ is over the rational nodal curve $\infty\in \overline{\mathcal{M}}_{1,1}$ is lying over a nodal curve with four rational irreducible components, denoted by $D_1,D_2,D_3$ and $D_h$.
    
    The dual graph is the chain of length four. That is, $F$ is lying over a rational nodal curve. This is not enough to prove that the general fiber of $p$ is rational, since $p$ is ramified over some components of $F$. 

    The component $D_h$ is horizontal, so not ramified over $p$ (see \cite[\S 4]{CoMoSc_LinearSubmanifolds}). The other components $D_1,D_2$ and $D_3$ correspond to the two-level graphs with two vertical edges. 

    By \cite[Prop 7.2]{CoMoZaEU}, the normal bundle $\mathcal{N}_{\Gamma}$ of a divisor $D_\Gamma$ satisfies $$c_1 (\mathcal{N}_{\Gamma})=-\frac{\kappa_e}{\ell_\Gamma}\left(\psi_{e^+}+\psi_{e^-}  \right)-\frac{1}{\ell_\Gamma}\sum_{\hat\Delta}\ell_{\hat\Delta,a_{\hat\Delta,\Gamma}}[D_{\hat\Delta}]$$

    where $e$ is any edge of $\Gamma$ the sum is taken over all 3-level graph $\hat\Delta$ where the edge $e$ becomes long. That is, $\hat\Delta$ is a degeneration of $\Gamma$ where $e$ becomes an edge joining the level 0 and level -2. Note that while \cite[Prop. 7.2]{CoMoZaEU} is proved for the strata of differentials without residue conditions, by the discussion of \cite{CoMoSc_LinearSubmanifolds} (especially Proposition 3.2 and Proposition 4.4 there), it also holds boundary divisors in residueless strata.

    For each $D_i$, $\psi_{e^+}\neq 0$ or $\psi_{e^-}\neq 0$. That is, $D_i^2 =\deg \mathcal{N}_{D_i}<0$ and each $D_i$ is an exceptional curve on $\overline\cR^u$. So the map $p$ factors through $$q: \cR'\to \overline{\mathcal{M}}_{1,1}$$ where $\cR'$ is obtained by contracting $D_1,D_2$ and $D_3$ from $\cR^u(3,1,-2,-2)$. Note that $q^{-1}(\infty)\cong D_h$, so the general fiber of $q$ is a smooth rational curve. So is the general fiber of $p$.

    Let us now denote by $p_0:\cR^u\to \mathcal{M}_{1,1}$ the restriction of $p$ to the interior. We now claim that the fiber $p_0^{-1}([E])$ is irreducible for any elliptic curve $E\in \mathcal{M}_{1,1}$. This would then imply $p|_{p^{-1}(\cM_{1,1})}:p^{-1}(\cM_{1,1})\rightarrow \cM_{1,1}$ is an honest $\mathbb{P}^1$ fibration over $\cM_{1,1}$. Since the Chow ring of $\cM_{1,1}$ is generated by the fundamental class, this would then imply the Chow ring of $\overline\cR^u$ is generated by the boundary divisors.
    
    Let $u,v\in E$ be distinct points. The space of meromorphic functions on $E$ with at most double poles at $t$ and $s$ has dimension $h^0(\mathcal{O}_E(2t+2s))=1+4-1=4$ by Riemann-Roch. The residueless condition on one pole makes it automatically double pole and also enforces the residueless condition on another pole. So the space of residueless functions at double poles at $t$ and $s$ has dimension three. We may assume that $t=0$.    
    
    Let $\wp(z)$ be the Weierstrass $\wp$-function on $E$. It is a residueless meromorphic function with only double pole at $0\in E$. Thus the space $W_s$ of residueless meromorphic functions with double poles at $0,s\in E$ is generated by $1, \wp(z)$ and $\wp(z-s)$. Let $F(z)=a+b\wp(z)+c\wp(z-s)$ be any such meromorphic function. We need to find a condition that $F$ has a zero of order three (at some $q\in E$ different from $u,s$). That is, $F(q)=F'(q)=F''(q)=0$. Equivalently, we have $b\wp'(q)+c\wp'(q-s) = b\wp''(q)+c\wp''(q-s)=0$. That is, $\frac{\wp'(q)}{\wp'(q-s)}=\frac{\wp''(q)}{\wp''(q-s)}$ and $$\frac{\wp''(q)}{\wp'(q)} = \frac{\wp''(q-s)}{\wp'(q-s)}.$$
    
    The curve $C\subset E^2$ consisting of such $(s,q)$ is the closure of a variety defined by the equations $m(q)=m(q-s)$ and $q,s,0$ distinct, where $m:E\to \mathbb{P}^1$ is a meromorphic function defined by $$m(z)=\frac{\wp''(z)}{\wp'(z)}.$$ The function $m$ has four simple poles at each point of $E[2]$, so $\deg m=4$. 
    
    The map $(q,s)\mapsto (q,q-s)$ is an automorphism of $E^2$, sending $C$ to the closure $C'$ of the set of points $(u,v)\in E^2$ satisfying $m(u)=m(v)$, $0,u,v$ are distinct. Since $\deg m=4$, the degree of the projection to first component $\pi_1: C'\to E$, $(u,v)\mapsto u$ is three. Assume the contrary that $C'$ is reducible and there exists a component $R$ of $C'$ such that $\deg \pi_1|_R =1$. That is, there exists a birational map $\phi:E\dashrightarrow R$. Then $\Phi=\pi_2\circ \phi : E\dashrightarrow E$ is extends to an automorphism of $E$ satisfying $m(z)=m(\Phi(z))$. The automorphism $\Phi$ preserves $E[2]$ and fixes 0, since $m$ has simple poles at each point of $E[2]$ and $\operatorname{res}_{z=0}m(z)=-3$. Thus $\Phi=\operatorname{Id}_E$ and $R$ is the diagonal $\Delta\in E^2$. However, $C'$ does not contain $\Delta$, a contradiction. Therefore $C$ is irreducible. 

\begin{comment}
    Another proof of rationality:
Now we need to compute $g(C)$. We need another construction of $C$ here. For fixed $q$, we can consider a linear map $j^s_q:W_s\to \mathbb{C}^3$ defined by $f\mapsto  (f(q),f'(q),f''(q))$. If $q$ is a zero of order three of $f$, then $j^s_q(f)=0$. That is, $f\in \ker j_q$. Thus $C$ is contained in the subvariety defined by $\operatorname{rank} j^s_q <3$ that is, $\det j^s_q(z)=\wp'(z)\wp''(z-s)-\wp'(z-s)\wp''(z)=0$ on $C$. So $C$ is contained in the determinantal subvariety of $E^2$. 

Let $L=\mathcal{O}_{E^2}(2H+2\Delta)$. Then $c_1({\pi_1}_* L) ={\pi_1}_* (\frac{1}{2}c_1(L)^2)$ by Grothendick-Riemann-Roch. Since $c_1(L)=2[H]+2[\Delta]$, we have $c_1(L)^2 = 8H.\Delta$. So $c_1({\pi_1}_* L)=4[pt]$.

Consider the defining sequence of $W_s$ $$0\to W_s\to {\pi_1} _* L\to \mathcal{O}_E(2t)\to 0$$ as a kernel of residue map, where $H=E\times \{pt\}$ and $\Delta=\{(u,v)|u=v\}$. From this, we have $c_1(W_s)=c_1({\pi_1} _*L)-2[pt]=2[pt]$. 

Now consider the map $j^2:\pi^*_1W_s\to J^2(L)$ between the rank 3 bundles on $E^2$. The curve $C$ is contained in the determinantal subvariety $X$ defined by $\det j^2=0$. Since $J^2(L)\cong L^{\oplus3}$, we have $c_1(J^2(L))=3c_1(L)=6[H]+6[\Delta]$. By Porteous formula, $[X]=c_1(J^2(L))-\pi_1^* c_1(W_s)=6[H]+6[\Delta]-2[V]$ where $V=\{pt\}\times E$. However, $X$ also contains $H$ and $\Delta$, so we have $[C]=5[H]+5[\Delta]-2[V]$ and $[C]^2=10$. By adjunction formula, we have $g(C)=1+\frac{[C]^2}{2}=6$. \Myeongjae{Incomplete}
\end{comment}
\end{proof}

\subsection*{Certifying CKgP}
Except for $i=2$, it is clear that $D_{\Gamma_i}^o$ has the CKgP with $CH^*(D_{\Gamma_i}^o)\cong \mathbb{Q}$ for all other $i$. We will now show the same holds also for $i=2$:
\begin{lemma}
  The open subset $D_{\Gamma_2}^o$ of the boundary divisor $D_{\Gamma_2}$ has the CKgP and $CH^*(D_{\Gamma_2}^o)\cong \mathbb{Q}$ generated by the fundamental class.
\end{lemma}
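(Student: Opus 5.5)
We use the finite correspondence of \Cref{sec:boundary_correspondence}. The plan is to show that the level product $B_{\Gamma_2}^o=(\cH(1,1)\times\cH(3,1,-3,-3))/S_2$ has the CKgP with Chow ring $\mathbb{Q}$. We then transfer this along the two finite étale maps
\[
D_{\Gamma_2}^o\xleftarrow{\;c_{\Gamma_2}^o\;}U\xrightarrow{\;p_{\Gamma_2}^o\;}B_{\Gamma_2}^o,
\]
where $U=U_{\Gamma_2,\Gamma_2}$. The map $p^o$ has degree $2$, since there are two prong-matching classes.

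\textbf{Step 1: the levels.} The top level is $\cH(1,1)$. Forgetting one zero embeds it as an open subset of $\cM_{2,1}$ (the complement of the Weierstra\ss{} locus), so it has the CKgP and Chow ring $\mathbb{Q}$, because $\psi$ and $\lambda$ vanish there. The bottom level is $\cH(3,1,-3,-3)$ in genus $0$. We normalize the poles to $0,\infty$ and the triple zero to $1$, so that $\omega=(z-1)^3(z-a)z^{-3}\,dz$. This is an open subset of $\mathbb{A}^1$ modulo a finite group, so it has the CKgP and Chow ring $\mathbb{Q}$. By feature (vi) the product has the CKgP, and feature (v) lets us pass to the $S_2$ quotient. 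Both factors have trivial Chow ring, and the rational Chow ring of a finite quotient is the invariant part, so $CH^*(B_{\Gamma_2}^o)\cong\mathbb{Q}$.

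\textbf{Step 2: transfer to $U$.} This is the main obstacle. Since $D_{\Gamma_2}^o$ is connected, $U$ is a connected double cover of $B^o$, and CKgP does not obviously lift along a finite cover. I would identify $U$ concretely. The two prong-matching classes should be distinguished by a $\mathbb{Z}/2$ monodromy, namely the parity of a prong rotation. In the bottom level this monodromy is realised by the ordering of the two poles, $z\mapsto 1/z$ type symmetry. In the top level it is realised by swapping the two simple zeros of the genus $2$ differential.

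This suggests that $U$ is isomorphic to (an open subset of) $\big(\cH(1,1)\times\cH(3,1,-3,-3)\big)/\langle\sigma\rangle$ for a different involution $\sigma$. That involution would act diagonally, combining the $\mathrm{Aut}(\Gamma_2)$ swap with a twist by the involution on one factor. I would verify this identification via the explicit prong-matching description of $R_\Gamma$ and $Tw_\Gamma$.

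Once this holds, $U$ is again a finite quotient of a space with the CKgP and Chow ring $\mathbb{Q}$. Hence $U$ has the CKgP with Chow ring $\mathbb{Q}$, by (v) and the invariant-part argument. If the identification fails, the fallback is to exhibit $U$ as an open subset of a rational variety, using the explicit genus-zero parameter $a$ together with a rational model of $\cH(1,1)\subset\cM_{2,1}$.

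\textbf{Step 3: descend to $D_{\Gamma_2}^o$.} The map $c^o$ is finite, surjective, and proper. By feature (ii), $D_{\Gamma_2}^o$ has the CKgP. For the Chow ring, the pullback $(c^o)^*$ is injective with rational coefficients, since $(c^o)_*(c^o)^*=\deg(c^o)\cdot\mathrm{id}$. Therefore $CH^*(D_{\Gamma_2}^o)$ embeds into $CH^*(U)\cong\mathbb{Q}$, and so it equals $\mathbb{Q}$, generated by the fundamental class.
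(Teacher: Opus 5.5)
Your Steps 1 and 3 are fine, and Step 3 is how the paper concludes. The gap is Step 2. You correctly single it out as the crux, but you do not resolve it, and the identification you propose there is not correct.

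The map $p^o$ makes $U$ a connected double cover of the labelled product $\cH(1,1)\times\cH(3,1,-3,-3)$, so $U$ lies \emph{above} that product. A quotient $(\cH(1,1)\times\cH(3,1,-3,-3))/\langle\sigma\rangle$ lies \emph{below} it, so no such quotient can be $U$ compatibly with the maps to $B^o$. Moreover, the symmetries you invoke (reordering the poles by $z\mapsto 1/z$, swapping the two simple zeros) are the $\mathrm{Aut}(\Gamma_2)$-action, which is already absorbed by $c^o$. They are not the monodromy of the prong-matching classes; in fact the top level contributes no monodromy at all. Your fallback of placing $U$ inside some rational variety gives no mechanism. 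Rationality alone would not give the CKgP for a $5$-dimensional space anyway, since rational varieties can have odd cohomology.

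The missing idea is that the double cover is pulled back from the bottom level. Every vertical divisor of $\overline\cH(1,1)$ has a unique prong-matching class, so the two classes of $\Gamma_2$ persist under degenerations of the top level. Hence $p^o$ extends to a finite $2:1$ map $U^*\to\overline\cH(1,1)\times\cH(3,1,-3,-3)$. Its fibres over the bottom factor have two connected components; the paper cites \cite[Thm.~1.3]{boissy_rauzy} for this. Stein factorization then gives $U^*\cong\overline\cH(1,1)\times C$, where $C\to\cH(3,1,-3,-3)\cong M_{0,4}$ is a connected double cover.

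You can see $C$ explicitly in your own normalization $\omega=(z-1)^3(z-a)z^{-3}\,dz$. Near $z=0$ one has $\omega\approx a\,z^{-3}dz$, while the leading coefficient at $\infty$ is constant. So a loop of $a$ around $0$ (or around $\infty$) turns the prongs at one pole by one step relative to the other, and a loop around $a=1$ does not. Thus $C$ is the double cover $b^2=a$, branched at two points, hence rational. Then $U^o\cong\cH(1,1)\times C$ has the CKgP with Chow ring $\mathbb{Q}$ by your Step 1 and feature (vi), and your Step 3 finishes the argument.
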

\begin{proof}
  Put $B_{\Gamma_2}\coloneqq \overline\cH(1,1)\times \overline\cH(3,1,-3,-3)$ the product of the strata corresponding to the two levels of $\Gamma_2$. We denote by $B_{\Gamma_2}^o$ (resp. $D_{\Gamma_2}^o$) the open subset of $B_{\Gamma_2}$ (resp. $D_{\Gamma_2}$) obtained  as the complement of the locus where the enhanced level graph degenerates further. Then as discussed in \Cref{sec:boundary_correspondence}, there is a stack $U_{\Gamma_2}^o$ that admits finite morphisms $p_{\Gamma_2}^o:U_{\Gamma_2}^o\rightarrow B_{\Gamma_2}^o$ and $c_{\Gamma_2}^o:U_{\Gamma_2}^o\rightarrow B_{\Gamma_2}^o$, where the former forgets the prong-matching equivalence classes (so it is of degree 2) and the latter is the quotient by $\text{Aut}(\Gamma_2)\cong S_2$. Then, it is enough to show that $U_{\Gamma_2}^o$ has the CKgP.

  First we claim that $p_{\Gamma_2}^o$ can be extended to a finite 2-to-1 morphism $U_{\Gamma_2}^*\rightarrow \overline\cH(1,1)\times \cH(3,1,-3,-3)\subset B_{\Gamma_2}$. For each degeneration $\Delta$ of the top level of $\Gamma_2$, we denote by $B_{\Gamma_2,\Delta}$ (resp. $D_{\Gamma_2,\Delta}$) the open subset of $B_{\Gamma_2}$ (resp. $D_{\Gamma_2}$) parameterizing the differentials compatible with an undegeneration of $\Delta$. Then again by the discussion of \Cref{sec:boundary_correspondence}, there is a stack $U_{\Gamma_2,\Delta}$ that admits finite morphisms $p_{\Gamma_2,\Delta}:U_{\Gamma_2,\Delta}\rightarrow B_{\Gamma_2,\Delta}$ and $c_{\Gamma_2,\Delta}:U_{\Gamma_2,\Delta}\rightarrow D_{\Gamma_2,\Delta}$. Note that the vertical boundary divisors of $\overline\cH(1,1)$ are given by one of the following enhanced level graphs:
\begin{figure}[H]
  \centering
  \begin{subfigure}{0.2\textwidth}
    \centering
\begin{tikzpicture}[main/.style = {draw, circle}]
    \node[main] at (0,0) {1};
    %\fill[black] (0,0) circle (3pt);
    \fill[black] (0, -1.5) circle (3pt);

    \draw[thick] (0,-.35) to [out=225, in=135] (0, -1.5);
    \draw[thick] (0,-.35) to [out=315, in=45] (0, -1.5);
    \draw (0,-1.5) -- (0.2,-2);
    \draw (0,-1.5) -- (-0.2,-2);

    \draw node at  (0.3,-2.2) {$1$};
    \draw node at  (-0.3,-2.2) {$1$};
    \draw node at  (-0.4,-1) {$1$};
    \draw node at  (0.4,-1) {$1$};
  \end{tikzpicture}
\end{subfigure}
\begin{subfigure}{0.2\textwidth}
    \centering
\begin{tikzpicture}[main/.style = {draw, circle}]
    \node[main] at (0,0) {3};
    %\fill[black] (0,0) circle (3pt);
    \fill[black] (0, -1.5) circle (3pt);

    \draw[thick] (0,-.35) to [out=270, in=90] (0, -1.5);
    \draw (0,-1.5) -- (0.2,-2);
    \draw (0,-1.5) -- (-0.2,-2);

    \draw node at  (0.3,-2.2) {$1$};
    \draw node at  (-0.3,-2.2) {$1$};
    \draw node at  (-0.1,-1) {$3$};
  \end{tikzpicture}
\end{subfigure}
\begin{subfigure}{0.2\textwidth}
    \centering
\begin{tikzpicture}[main/.style = {draw, circle}]
    \node[main] at (0,0) {1};
    %\fill[black] (0,0) circle (3pt);
    \node[main] at (0,-1.85) {1};

    \draw[thick] (0,-.35) to [out=270, in=90] (0, -1.5);
    \draw (0,-2.2) -- (0.2,-2.7);
    \draw (0,-2.2) -- (-0.2,-2.7);

    \draw node at  (0.3,-2.9) {$1$};
    \draw node at  (-0.3,-2.9) {$1$};
    \draw node at  (0.1,-1) {$1$};
  \end{tikzpicture}
\end{subfigure}
\begin{subfigure}{0.2\textwidth}
    \centering
\begin{tikzpicture}[main/.style = {draw, circle}]
    %\node[main] at (0,0) {1};
    \fill[black] (0,0) circle (3pt);
    \node[main] at (-.8,1.85) {1};
    \node[main] at (.8,1.85) {1};

    \draw[thick] (0,0) to [out=120, in=-60] (-.8, 1.5);
    \draw[thick] (0,0) to [out=60, in=-120] (.8, 1.5);
    \draw (0,0) -- (0.2,-.5);
    \draw (0,0) -- (-0.2,-.5);

    \draw node at  (0.3,-.7) {$1$};
    \draw node at  (-0.3,-.7) {$1$};
    \draw node at  (-0.6,0.9) {$1$};
    \draw node at  (0.6,0.9) {$1$};
  \end{tikzpicture},
\end{subfigure}
\end{figure}
all of which have the unique prong-matching equivalence class. Note that only the second level graphs has one edge with an enhancement greater than 2, so for any intersection of these boundary divisors, we will also have unique prong-matching equivalence class. Consequently, for any degeneration $\Delta$ of the top level of $\Gamma_2$, the compatible twisted differentials will continue to have two prong matching equivalence classes coming from the bottom level passage. As a result, the morphism $p_{\Gamma_2,\Delta}$ is still 2-to-1 and restricts exactly to $p_{\Gamma_2}^o$ over $B_{\Gamma_2}^o\subset B_{\Gamma_2,\Delta}$. Thus, $p_{\Gamma_2,\Delta}$'s glue to give a 2-to-1 morphism $U_{\Gamma_2}^*\rightarrow \overline\cH(1,1)\times \cH(3,1,-3,-3)$, which we denote by $p_{\Gamma_2}$.

  Then the projection $U_{\Gamma_2}^*\rightarrow \cH(3,1,-3,-3)$ is a proper morphism whose any of its fiber has two connected components by \cite[Thm. 1.3]{boissy_rauzy}. If $U_{\Gamma_2}^*\rightarrow C \rightarrow \cH(3,1,-3,-3)$ is the Stein factorization then $C\rightarrow \cH(3,1,-3,-3)\cong M_{0,4}$ is a 2-to-1 unramified covering. Since the branching can be along at most three boundary points of $M_{0,4}$ (in fact, branching occurs only along the two points in the boundary where one of the zeros collides with one of the nodes/poles), by Riemann-Hurwitz, the cover is again a rational curve (i.e. $C$ is again an open subset of $\mathbb{P}^1$). So, by the universal property of product, $U_{\Gamma_2}^*\cong \overline\cH(1,1)\times C$, and thus $D_{\Gamma_2}^o$ is a finite image of $\cH(1,1)\times C$ which has CKgP, with the Chow ring isomorphic to $\mathbb{Q}$ generated by the fundamental class.
\end{proof}
In fact even though the above claim is stated for $D_{\Gamma_2}^o$, the proof implies that $\overline\cH(1,1)\times \mathbb{P}^1$ dominates the closure $D_{\Gamma_2}$, so it also has the CKgP with the Chow ring generated by the boundary strata.

With the only boundary divisor $D_{\Gamma_2}$ having non-trivial prong-matching equivalence classes out of the way, the rest of the boundary strata of $\overline\cH(3,1)$ can be dealt with analogously to the $\mu=(4)$ case in the previous section. Indeed the boundary divisors $D_{\Gamma_i}$ for $i=7,8,12$ can be shown to have CKgP with the Chow ring generated by the boundary strata via the application of \Cref{thm:ckgp_fibration}. And for the remaining vertical boundary strata, they will be finitely covered by the product of the generalized strata for each levels.

As for the codimension two vertical boundary strata, they can be enumerated by the 3-level vertical graphs. The entire list of graphs are depicted in Figure~\ref{fig:h4} and Figure~\ref{fig:h13}. Every codimension two boundary strata $D(\Gamma)$ has CKpG. See Appendix~\ref{appendix} for the detail. 

The horizontal boundary divisor $D_h$ of $\overline\cH(3,1)$, on the other hand, is finitely covered by $\overline\cH(3,1,-1,-1)/S_2$, which, in turn, admits a birational morphism onto $\overline{\mathcal{M}}_{2,2}$. Thus, we can prove $D_h$ also has the CKgP via similar argument to $\mu=(4)$ case, by analyzing the exceptional divisors of this birational morphism. Thus, we have:
\begin{proposition}
  The compactified stratum $\overline\cH(3,1)$ has the CKgP and the cohomology (as well as the Chow) ring is tautological, generated by the boundary strata.
\end{proposition}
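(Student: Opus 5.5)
The plan is to stratify $\overline\cH(3,1)$ into the open stratum $\cH(3,1)$ and the open boundary strata $D_\Gamma^o$, and to check that each stratum has the CKgP with Chow ring spanned by the fundamental classes of its components. Feature (iii) of \Cref{subsection:ckgp_intro} then gives the CKgP for $\overline\cH(3,1)$. Excision on Chow groups shows that $CH^*(\overline\cH(3,1))$ is generated by the classes of the closed boundary strata $D_\Gamma$ together with the fundamental class. Finally, feature (vii) shows that the cycle class map is an isomorphism, so the cohomology ring is also generated by boundary strata, hence tautological in the largest sense of \cite{CoMoZaEU}.

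For the interior, the CKgP of $\cH(3,1)$ and the fact that its Chow ring is $\mathbb{Q}$ are supplied by \cite{chen_gorensteinsingularity}. For the vertical divisors, I will go through the list $\Gamma_1,\ldots,\Gamma_{12}$ in turn.
\begin{enumerate}
\item For $i\neq 2,7,8,12$, the prong-matching class is unique, so the finite map $p_\Gamma^o$ is an isomorphism up to the automorphism quotient. Then $D_{\Gamma_i}^o$ is a finite image of a product of level strata. Each level stratum has the CKgP with trivial Chow ring by Sections~\ref{sec:genustwo} and~\ref{sec:genusone} and the enumeration above; here I also use that $\cH(3,-3)\cong X_1(3)$ is rational and that $\cH(3,1,-2,-2)$ and $\cH(3,1,-4)$ are open in $\cM_{1,n}$. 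Features (ii) and (vi) then conclude.
\item The case $i=2$ is the lemma just proved.
\item For $i=7,8,12$, the divisor is exceptional for $\overline\cH(3,1)\to\overline\cM_{3,2}$, and I will apply \Cref{thm:ckgp_fibration} as in \Cref{rem:ckgp_bdry}. The image $f(D_\Gamma)$ is a product of an elliptic or genus-two level with the bottom residueless stratum. These bottom strata are $\cR^u(3,1,-2,-2)$ (proved above), $\cH(3,1\mid-2\mid-2,-2)$ (a rational curve), and the zero-dimensional $\cR(3,1,-2,-4)$. Where the fibers are reducible, I will stratify $D_\Gamma$ as was done for $D_{\Gamma_8}$ in the $\overline\cH(4)$ case.
\end{enumerate}

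For higher codimension, \Cref{rem:codimension_up_to_two} reduces the vertex check to three-level graphs, which are listed in the appendix. The remaining issue is whether prong-matching classes multiply along long edges. Only $\Gamma_1,\Gamma_2,\Gamma_5,\Gamma_9$ carry enhancements $\ge 2$. I will argue as in Section~\ref{sec:fs}: either no long edge is created, or the correspondence factors like the $\Gamma_2$ lemma through a rational double cover.

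For the horizontal divisor, $D_h$ is a finite image of $\overline\cH(3,1,-1,-1)/S_2$, which maps birationally, via normalization of its image, onto $\overline\cM_{2,2}$. Since $\overline\cM_{2,2}$ has the CKgP, it suffices to treat the exceptional divisors. These are of two kinds: the ones for the simple-pole-forgetting map, which are images of the $D_{\Gamma_i,h}$ and are covered by products of lower strata, and the $f$-exceptional ones lying over $\Gamma_7,\Gamma_8,\Gamma_{12}$-type graphs. For the latter I again apply \Cref{thm:ckgp_fibration}.

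I expect the main obstacle to be the non-product boundary pieces. These are the $\Gamma_2$-type strata and their degenerations, which carry two prong classes, together with the verification that the fibrations over $f(D_\Gamma)$ have uniform toric fibers with trivial monodromy. The rest is bookkeeping against the appendix.
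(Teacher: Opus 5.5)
Your proposal is correct and follows essentially the same route as the paper. Both arguments stratify by boundary strata and use the interior result of \cite{chen_gorensteinsingularity}. Both use finite covers by level-wise products for $\Gamma_i$ with $i\neq 2,7,8,12$, the $\Gamma_2$ lemma, and \Cref{thm:ckgp_fibration} for the exceptional divisors $D_{\Gamma_7},D_{\Gamma_8},D_{\Gamma_{12}}$. Both then check the three-level graphs from the appendix and treat $D_h$ via the birational map from $\overline\cH(3,1,-1,-1)/S_2$ to $\overline\cM_{2,2}$.

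One bookkeeping correction. $\Gamma_{10}$, $\Gamma_{11}$ and $\Gamma_{12}$ also carry enhancements $\ge 2$, so your list of four graphs is incomplete. Moreover, $\Gamma_{5,12}$ has a long edge of enhancement $3$ yet still only one prong-matching class, which does not fit your long-edge dichotomy. For each three-level graph you should therefore check the number of classes $K_\Gamma/[R_\Gamma:Tw_\Gamma]$, not the presence of long edges.
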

This concludes the proof of \Cref{thm:main_thm} and \Cref{cor:main_cor}.

\newpage
\appendix
\section{Vertical 3-level graphs}\label[appendix]{appendix}

In this appendix, we will list the vertical 3-level graphs corresponding to codimension two boundary strata of $\cR(\mu)$, $\mu=(4),(3,1),(2,2)$. In general, we can have more than one 3-level graphs degenerating the pair $\Gamma_i,\Gamma_j$. But for $\mu=(4),(3,1),(2,2)$, we only have one for each pair. So we denote $\Gamma_{i,j}$ the 3-level vertical graph obtained by degenerating $\Gamma_i$ and $\Gamma_j$.

\tikzset{
  vgraph/.style={
    x=1cm,
    y=1cm,
    every node/.style={font=\small},
    line cap=round,
    line join=round
  },
  component/.style={
    draw,
    circle,
    fill=white,
    minimum size=6.8mm,
    inner sep=0pt,
    font=\small,
    line width=0.7pt
  },
  blacknode/.style={circle,fill=black,inner sep=1.8pt},
  opennode/.style={
    circle,
    draw=black,
    fill=black,
    minimum size=2.2mm,
    inner sep=0pt
  },
  edgelabel/.style={font=\scriptsize,fill=white,inner sep=0.8pt},
  halfedgelabel/.style={font=\scriptsize,fill=white,inner sep=0.8pt},
  leglabel/.style={font=\scriptsize,fill=white,inner sep=0.6pt},
  edge/.style={draw,line width=0.75pt}
}

% Every graph uses the same three heights:
% top level y=0, middle level y=-1.05, bottom level y=-2.10.
% The fixed bounding box keeps all subfigures and captions aligned.
\newcommand{\vgbbox}{%
  \path[use as bounding box] (-1.12,0.42) rectangle (1.12,-2.94);%
}

\newcommand{\sgref}[2]{\textnormal{(\ref{sg:#1:#2})}}
% Explicit visible graph labels, independent of the subfigure counter.
% Examples: \sgcaption{A}{\Gamma_{1,3}},
%           \sgcaption{a}{\Gamma_{5,10}},
%           \sgcaption{Q}{\Gamma_{7,13}}.
\newcommand{\sgtag}[1]{\textnormal{(#1)}}
\newcommand{\sgcaption}[2]{%
  \caption*{\sgtag{#1}\enspace $#2$}%
}

% ------------------------------------------------------------------
% H(4), codimension 2
% Visible labels: (A),...,(P)
% All vertices lie on the common heights y=0,-1.05,-2.10.
% ------------------------------------------------------------------
\begin{figure}[H]
\centering
\begin{subfigure}[t]{0.19\textwidth}\centering
\begin{tikzpicture}[vgraph]
  \vgbbox
  \node[component] (T) at (0,0) {1};
  \node[opennode] (M) at (0,-1.05) {};
  \node[opennode] (B) at (0,-2.10) {};
  \draw[edge] (T.south west) to[out=230,in=130] (M);
  \draw[edge] (T.south east) to[out=310,in=50] (M);
  \draw[edge] (M) to[out=230,in=130] (B);
  \draw[edge] (M) to[out=310,in=50] (B);
  \draw[edge] (B)--(0,-2.65);
  \node[edgelabel] at (-0.70,-1.62) {$3$};
  \node[leglabel] at (0,-2.84) {$4$};
\end{tikzpicture}
\sgcaption{A}{\Gamma_{1,3}}
\end{subfigure}\hfill
\begin{subfigure}[t]{0.19\textwidth}\centering
\begin{tikzpicture}[vgraph]
  \vgbbox
  \node[component] (T) at (0,0) {1};
  \node[opennode] (R) at (0.65,-1.05) {};
  \node[opennode] (B) at (0,-2.10) {};
  \draw[edge] (T.south west) to[out=235,in=125] (B);
  \draw[edge] (T.south) to[out=285,in=165] (R);
  \draw[edge] (T.east) to[out=-15,in=100] (R);
  \draw[edge] (R) to[out=-105,in=40] (B);
  \draw[edge] (B)--(0,-2.65);
  \node[edgelabel] at (0.67,-1.62) {$3$};
  \node[leglabel] at (0,-2.84) {$4$};
\end{tikzpicture}
\sgcaption{B}{\Gamma_{1,4}}
\end{subfigure}\hfill
\begin{subfigure}[t]{0.19\textwidth}\centering
\begin{tikzpicture}[vgraph]
  \vgbbox
  \node[component] (T) at (0,0) {2};
  \node[opennode] (M) at (0,-1.05) {};
  \node[opennode] (B) at (0,-2.10) {};
  \draw[edge] (T)--(M);
  \draw[edge] (M) to[out=230,in=130] (B);
  \draw[edge] (M) to[out=310,in=50] (B);
  \draw[edge] (B)--(0,-2.65);
  \node[edgelabel] at (-0.34,-0.55) {$3$};
  \node[edgelabel] at (-0.70,-1.62) {$3$};
  \node[leglabel] at (0,-2.84) {$4$};
\end{tikzpicture}
\sgcaption{C}{\Gamma_{1,5}}
\end{subfigure}\hfill
\begin{subfigure}[t]{0.19\textwidth}\centering
\begin{tikzpicture}[vgraph]
  \vgbbox
  \node[component] (T) at (0,0) {1};
  \node[component] (M) at (0,-1.05) {1};
  \node[opennode] (B) at (0,-2.10) {};
  \draw[edge] (T)--(M);
  \draw[edge] (M.south west) to[out=225,in=135] (B);
  \draw[edge] (M.south east) to[out=315,in=45] (B);
  \draw[edge] (B)--(0,-2.65);
  \node[edgelabel] at (-0.70,-1.62) {$3$};
  \node[leglabel] at (0,-2.84) {$4$};
\end{tikzpicture}
\sgcaption{D}{\Gamma_{1,6}}
\end{subfigure}\hfill
\begin{subfigure}[t]{0.19\textwidth}\centering
\begin{tikzpicture}[vgraph]
  \vgbbox
  \node[component] (T) at (0,0) {1};
  \node[opennode] (M) at (0,-1.05) {};
  \node[opennode] (B) at (0,-2.10) {};
  \draw[edge] (T.south west) to[out=230,in=130] (M);
  \draw[edge] (T.south east) to[out=310,in=50] (M);
  \draw[edge] (M) to[out=230,in=130] (B);
  \draw[edge] (M) to[out=310,in=50] (B);
  \draw[edge] (B)--(0,-2.65);
  \node[edgelabel] at (-0.70,-1.62) {$2$};
  \node[edgelabel] at (0.70,-1.62) {$2$};
  \node[leglabel] at (0,-2.84) {$4$};
\end{tikzpicture}
\sgcaption{E}{\Gamma_{2,3}}
\end{subfigure}

\par\vspace{0.8em}

\begin{subfigure}[t]{0.19\textwidth}\centering
\begin{tikzpicture}[vgraph]
  \vgbbox
  \node[component] (T) at (0,0) {2};
  \node[opennode] (M) at (0,-1.05) {};
  \node[opennode] (B) at (0,-2.10) {};
  \draw[edge] (T)--(M);
  \draw[edge] (M) to[out=230,in=130] (B);
  \draw[edge] (M) to[out=310,in=50] (B);
  \draw[edge] (B)--(0,-2.65);
  \node[edgelabel] at (-0.34,-0.55) {$3$};
  \node[edgelabel] at (-0.70,-1.62) {$2$};
  \node[edgelabel] at (0.70,-1.62) {$2$};
  \node[leglabel] at (0,-2.84) {$4$};
\end{tikzpicture}
\sgcaption{F}{\Gamma_{2,5}}
\end{subfigure}\hfill
\begin{subfigure}[t]{0.19\textwidth}\centering
\begin{tikzpicture}[vgraph]
  \vgbbox
  \node[component] (T) at (0,0) {1};
  \node[component] (M) at (0,-1.05) {1};
  \node[opennode] (B) at (0,-2.10) {};
  \draw[edge] (T)--(M);
  \draw[edge] (M.south west) to[out=225,in=135] (B);
  \draw[edge] (M.south east) to[out=315,in=45] (B);
  \draw[edge] (B)--(0,-2.65);
  \node[edgelabel] at (-0.70,-1.62) {$2$};
  \node[edgelabel] at (0.70,-1.62) {$2$};
  \node[leglabel] at (0,-2.84) {$4$};
\end{tikzpicture}
\sgcaption{G}{\Gamma_{2,6}}
\end{subfigure}\hfill
\begin{subfigure}[t]{0.19\textwidth}\centering
\begin{tikzpicture}[vgraph]
  \vgbbox
  \node[component] (L) at (-0.55,0) {1};
  \node[component] (R) at (0.55,0) {1};
  \node[opennode] (M) at (0,-1.05) {};
  \node[opennode] (B) at (0,-2.10) {};
  \draw[edge] (L)--(M);
  \draw[edge] (R)--(M);
  \draw[edge] (M) to[out=230,in=130] (B);
  \draw[edge] (M) to[out=310,in=50] (B);
  \draw[edge] (B)--(0,-2.65);
  \node[edgelabel] at (-0.70,-1.62) {$2$};
  \node[edgelabel] at (0.70,-1.62) {$2$};
  \node[leglabel] at (0,-2.84) {$4$};
\end{tikzpicture}
\sgcaption{H}{\Gamma_{2,7}}
\end{subfigure}\hfill
\begin{subfigure}[t]{0.19\textwidth}\centering
\begin{tikzpicture}[vgraph]
  \vgbbox
  \node[component] (T) at (0,0) {1};
  \node[opennode] (R) at (0.65,-1.05) {};
  \node[opennode] (B) at (0,-2.10) {};
  \draw[edge] (T.south west) to[out=235,in=125] (B);
  \draw[edge] (T.east) to[out=-15,in=100] (R);
  \draw[edge] (R)--(B);
  \draw[edge] (R.east) to[out=-65,in=20] (B.east);
  \draw[edge] (B)--(0,-2.65);
  \node[leglabel] at (0,-2.84) {$4$};
\end{tikzpicture}
\sgcaption{I}{\Gamma_{3,4}}
\end{subfigure}\hfill
\begin{subfigure}[t]{0.19\textwidth}\centering
\begin{tikzpicture}[vgraph]
  \vgbbox
  \node[component] (T) at (0,0) {1};
  \node[opennode] (M) at (0,-1.05) {};
  \node[component] (B) at (0,-2.10) {1};
  \draw[edge] (T.south west) to[out=225,in=135] (M);
  \draw[edge] (T.south east) to[out=315,in=45] (M);
  \draw[edge] (M)--(B);
  \draw[edge] (B.south)--(0,-2.65);
  \node[edgelabel] at (0.31,-1.57) {$3$};
  \node[leglabel] at (0,-2.84) {$4$};
\end{tikzpicture}
\sgcaption{J}{\Gamma_{3,5}}
\end{subfigure}

\par\vspace{0.8em}

\begin{subfigure}[t]{0.19\textwidth}\centering
\begin{tikzpicture}[vgraph]
  \vgbbox
  \node[component] (T) at (0,0) {1};
  \node[opennode] (M) at (0,-1.05) {};
  \node[component] (B) at (0,-2.10) {1};
  \draw[edge] (T)--(M);
  \draw[edge] (M) to[out=230,in=130] (B.north west);
  \draw[edge] (M) to[out=310,in=50] (B.north east);
  \draw[edge] (B.south)--(0,-2.65);
  \node[leglabel] at (0,-2.84) {$4$};
\end{tikzpicture}
\sgcaption{K}{\Gamma_{3,6}}
\end{subfigure}\hfill
\begin{subfigure}[t]{0.19\textwidth}\centering
\begin{tikzpicture}[vgraph]
  \vgbbox
  \node[component] (T) at (0.35,0) {1};
  \node[component] (L) at (-0.55,-1.05) {1};
  \node[opennode] (B) at (0,-2.10) {};
  \draw[edge] (L)--(B);
  \draw[edge] (T.south west) to[out=230,in=105] (B);
  \draw[edge] (T.south east) to[out=305,in=45] (B);
  \draw[edge] (B)--(0,-2.65);
  \node[leglabel] at (0,-2.84) {$4$};
\end{tikzpicture}
\sgcaption{L}{\Gamma_{3,8}}
\end{subfigure}\hfill
\begin{subfigure}[t]{0.19\textwidth}\centering
\begin{tikzpicture}[vgraph]
  \vgbbox
  \node[component] (T) at (0,0) {1};
  \node[opennode] (M) at (0,-1.05) {};
  \node[opennode] (B) at (0,-2.10) {};
  \draw[edge] (T)--(M);
  \draw[edge] (M) to[out=230,in=130] (B);
  \draw[edge] (M)--(B);
  \draw[edge] (M) to[out=310,in=50] (B);
  \draw[edge] (B)--(0,-2.65);
  \node[leglabel] at (0,-2.84) {$4$};
\end{tikzpicture}
\sgcaption{M}{\Gamma_{4,5}}
\end{subfigure}\hfill
\begin{subfigure}[t]{0.19\textwidth}\centering
\begin{tikzpicture}[vgraph]
  \vgbbox
  \node[component] (T) at (0,0) {1};
  \node[component] (M) at (0,-1.05) {1};
  \node[component] (B) at (0,-2.10) {1};
  \draw[edge] (T)--(M)--(B);
  \draw[edge] (B.south)--(0,-2.65);
  \node[leglabel] at (0,-2.84) {$4$};
\end{tikzpicture}
\sgcaption{N}{\Gamma_{5,5}}
\end{subfigure}\hfill
\begin{subfigure}[t]{0.19\textwidth}\centering
\begin{tikzpicture}[vgraph]
  \vgbbox
  \node[component] (L) at (-0.55,0) {1};
  \node[component] (R) at (0.55,-1.05) {1};
  \node[opennode] (B) at (0,-2.10) {};
  \draw[edge] (L.south east) to[out=300,in=130] (B);
  \draw[edge] (R.south west) to[out=230,in=70] (B);
  \draw[edge] (R.south east) to[out=305,in=35] (B);
  \draw[edge] (B)--(0,-2.65);
  \node[leglabel] at (0,-2.84) {$4$};
\end{tikzpicture}
\sgcaption{O}{\Gamma_{6,8}}
\end{subfigure}

\par\vspace{0.8em}

\begin{subfigure}[t]{0.19\textwidth}\centering
\begin{tikzpicture}[vgraph]
  \vgbbox
  \node[component] (T) at (-0.35,0) {1};
  \node[component] (R) at (0.55,-1.05) {1};
  \node[component] (B) at (0,-2.10) {1};
  \draw[edge] (T)--(B);
  \draw[edge] (R)--(B);
  \draw[edge] (B.south)--(0,-2.65);
  \node[leglabel] at (0,-2.84) {$4$};
\end{tikzpicture}
\sgcaption{P}{\Gamma_{6,7}}
\end{subfigure}

\caption{3-level graphs in $\overline{\mathcal H}(4)$.}

\label{fig:h4}
\end{figure}
\newpage

\newpage
% ------------------------------------------------------------------
% H(3,1), codimension 2
% Visible labels: (a),...,(p)
% Half-edges are labeled a_2 and a_1.  When they occur on different
% levels, the higher half-edge is a_2.
% The selected middle-to-bottom edge is labeled 2-a_2.
% ------------------------------------------------------------------
\begin{figure}[H]
\centering

\begin{subfigure}[t]{0.19\textwidth}\centering
\begin{tikzpicture}[vgraph]
  \vgbbox
  \node[component] (T) at (0,0) {2};
  \node[opennode] (M) at (0,-1.05) {};
  \node[component] (B) at (-0.25,-2.10) {1};
  \draw[edge] (T)--(M);
  \draw[edge] (M)--(B);
  \draw[edge] (M)--(0.86,-1.48);
  \draw[edge] (B.south)--(-0.25,-2.65);
  \node[halfedgelabel] at (0.94,-1.31) {$a_2$};
  \node[halfedgelabel] at (-0.25,-2.84) {$a_1$};
  \node[edgelabel] at (-0.8,-1.5) {$3-a_2$};
\end{tikzpicture}
\sgcaption{a}{\Gamma_{5,10}}
\end{subfigure}\hfill
\begin{subfigure}[t]{0.19\textwidth}\centering
\begin{tikzpicture}[vgraph]
  \vgbbox
  \node[component] (T) at (0,0) {1};
  \node[component] (M) at (0,-1.05) {1};
  \node[component] (B) at (-0.30,-2.10) {1};
  \draw[edge] (T)--(M);
  \draw[edge] (M)--(B);
  \draw[edge] (M.east)--(0.86,-1.48);
  \draw[edge] (B.south)--(-0.30,-2.65);
  \node[halfedgelabel] at (1,-1.31) {$a_2$};
  \node[halfedgelabel] at (-0.30,-2.84) {$a_1$};
  \node[edgelabel] at (-0.8,-1.5) {$3-a_2$};
\end{tikzpicture}
\sgcaption{b}{\Gamma_{6,10}}
\end{subfigure}\hfill
\begin{subfigure}[t]{0.19\textwidth}\centering
\begin{tikzpicture}[vgraph]
  \vgbbox
  \node[component] (T) at (0,0) {1};
  \node[opennode] (M) at (0,-1.05) {};
  \node[component] (B) at (-0.30,-2.10) {1};
  \draw[edge] (T.south west) to[out=225,in=135] (M);
  \draw[edge] (T.south east) to[out=315,in=45] (M);
  \draw[edge] (M)--(B);
  \draw[edge] (M)--(0.86,-1.48);
  \draw[edge] (B.south)--(-0.30,-2.65);
  \node[halfedgelabel] at (0.94,-1.31) {$a_2$};
  \node[halfedgelabel] at (-0.30,-2.84) {$a_1$};
  \node[edgelabel] at (-0.8,-1.5) {$3-a_2$};
\end{tikzpicture}
\sgcaption{c}{\Gamma_{3,10}}
\end{subfigure}\hfill
\begin{subfigure}[t]{0.19\textwidth}\centering
\begin{tikzpicture}[vgraph]
  \vgbbox
  \node[component] (T) at (0,0) {2};
  \node[opennode] (M) at (-0.35,-1.05) {};
  \node[opennode] (B) at (0.15,-2.10) {};
  \draw[edge] (T.south west) to[out=225,in=105] (M);
  \draw[edge] (T.south east) to[out=315,in=45] (B);
  \draw[edge] (M)--(-0.92,-1.50);
  \draw[edge] (M)--(B);
  \draw[edge] (B)--(0.15,-2.65);
  \node[edgelabel] at (-0.6,-0.52) {$3$};
  \node[halfedgelabel] at (-1.1,-1.32) {$a_2$};
  \node[halfedgelabel] at (0.15,-2.84) {$a_1$};
  \node[edgelabel] at (-0.6,-1.67) {$3-a_2$};
\end{tikzpicture}
\sgcaption{d}{\Gamma_{1,11}}
\end{subfigure}\hfill
\begin{subfigure}[t]{0.19\textwidth}\centering
\begin{tikzpicture}[vgraph]
  \vgbbox
  \node[component] (T) at (0,0) {2};
  \node[opennode] (M) at (-0.35,-1.05) {};
  \node[opennode] (B) at (0.15,-2.10) {};
  \draw[edge] (T.south west) to[out=225,in=105] (M);
  \draw[edge] (T.south east) to[out=315,in=45] (B);
  \draw[edge] (M)--(-0.92,-1.50);
  \draw[edge] (M)--(B);
  \draw[edge] (B)--(0.15,-2.65);
  \node[edgelabel] at (-0.6,-0.52) {$2$};
  \node[edgelabel] at (1,-1.02) {$2$};
  \node[halfedgelabel] at (-1.1,-1.32) {$a_2$};
  \node[halfedgelabel] at (0.15,-2.84) {$a_1$};
  \node[edgelabel] at (-0.6,-1.67) {$2-a_2$};
\end{tikzpicture}
\sgcaption{e}{\Gamma_{2,11}}
\end{subfigure}

\par\vspace{0.8em}

\begin{subfigure}[t]{0.19\textwidth}\centering
\begin{tikzpicture}[vgraph]
  \vgbbox
  \node[component] (T) at (0,0) {1};
  \node[opennode] (M) at (0,-1.05) {};
  \node[opennode] (B) at (0,-2.10) {};
  \draw[edge] (T.south west) to[out=225,in=135] (M);
  \draw[edge] (T.south east) to[out=315,in=45] (M);
  \draw[edge] (M)--(0.88,-1.40);
  \draw[edge] (M) to[out=230,in=130] (B);
  \draw[edge] (M) to[out=310,in=50] (B);
  \draw[edge] (B)--(0,-2.65);
  \node[halfedgelabel] at (0.96,-1.22) {$a_2$};
  \node[halfedgelabel] at (0,-2.84) {$a_1$};
  \node[edgelabel] at (-0.8,-1.67) {$3-a_2$};
\end{tikzpicture}
\sgcaption{f}{\Gamma_{3,11}}
\end{subfigure}\hfill
\begin{subfigure}[t]{0.19\textwidth}\centering
\begin{tikzpicture}[vgraph]
  \vgbbox
  \node[component] (T) at (0,0) {1};
  \node[opennode] (M) at (0.65,-1.05) {};
  \node[opennode] (B) at (0,-2.10) {};
  \draw[edge] (T.south west) to[out=225,in=135] (B);
  \draw[edge] (T.south) to[out=270,in=150] (M);
  \draw[edge] (T.east) to[out=-15,in=100] (M);
  \draw[edge] (M)--(B);
  \draw[edge] (M)--(1.00,-1.48);
  \draw[edge] (B)--(0,-2.65);
  \node[halfedgelabel] at (1.2,-1.28) {$a_2$};
  \node[halfedgelabel] at (0,-2.84) {$a_1$};
  \node[edgelabel] at (0.9,-1.67) {$3-a_2$};
\end{tikzpicture}
\sgcaption{g}{\Gamma_{4,11}}
\end{subfigure}\hfill
\begin{subfigure}[t]{0.19\textwidth}\centering
\begin{tikzpicture}[vgraph]
  \vgbbox
  \node[component] (T) at (0,0) {2};
  \node[opennode] (M) at (0,-1.05) {};
  \node[opennode] (B) at (0,-2.10) {};
  \draw[edge] (T)--(M);
  \draw[edge] (M)--(0.88,-1.40);
  \draw[edge] (M) to[out=230,in=130] (B);
  \draw[edge] (M) to[out=310,in=50] (B);
  \draw[edge] (B)--(0,-2.65);
  \node[edgelabel] at (-0.34,-0.55) {$3$};
  \node[halfedgelabel] at (0.96,-1.22) {$a_2$};
  \node[halfedgelabel] at (0,-2.84) {$a_1$};
  \node[edgelabel] at (-0.8,-1.67) {$3-a_2$};
\end{tikzpicture}
\sgcaption{h}{\Gamma_{5,11}}
\end{subfigure}\hfill
\begin{subfigure}[t]{0.19\textwidth}\centering
\begin{tikzpicture}[vgraph]
  \vgbbox
  \node[component] (T) at (0,0) {1};
  \node[component] (M) at (0,-1.05) {1};
  \node[opennode] (B) at (0,-2.10) {};
  \draw[edge] (T)--(M);
  \draw[edge] (M.east)--(0.88,-1.40);
  \draw[edge] (M.south west) to[out=225,in=135] (B);
  \draw[edge] (M.south east) to[out=315,in=45] (B);
  \draw[edge] (B)--(0,-2.65);
  \node[halfedgelabel] at (0.96,-1.22) {$a_2$};
  \node[halfedgelabel] at (0,-2.84) {$a_1$};
  \node[edgelabel] at (-0.9,-1.67) {$3-a_2$};
\end{tikzpicture}
\sgcaption{i}{\Gamma_{6,11}}
\end{subfigure}\hfill
\begin{subfigure}[t]{0.19\textwidth}\centering
\begin{tikzpicture}[vgraph]
  \vgbbox
  \node[component] (L) at (-0.50,0) {1};
  \node[component] (R) at (0.50,0) {1};
  \node[opennode] (M) at (0,-1.05) {};
  \node[opennode] (B) at (0,-2.10) {};
  \draw[edge] (L)--(M);
  \draw[edge] (R)--(M);
  \draw[edge] (M)--(0.88,-1.40);
  \draw[edge] (M) to[out=230,in=130] (B);
  \draw[edge] (M) to[out=310,in=50] (B);
  \draw[edge] (B)--(0,-2.65);
  \node[halfedgelabel] at (0.96,-1.22) {$a_2$};
  \node[halfedgelabel] at (0,-2.84) {$a_1$};
  \node[edgelabel] at (-0.8,-1.67) {$3-a_2$};
\end{tikzpicture}
\sgcaption{j}{\Gamma_{7,11}}
\end{subfigure}

\par\vspace{0.8em}

\begin{subfigure}[t]{0.19\textwidth}\centering
\begin{tikzpicture}[vgraph]
  \vgbbox
  \node[component] (T) at (0.30,0) {2};
  \node[opennode] (M) at (-0.30,-1.05) {};
  \node[opennode] (B) at (0,-2.10) {};
  \draw[edge] (T)--(M);
  \draw[edge] (T)--(0.95,-0.45);
  \draw[edge] (M) to[out=230,in=130] (B);
  \draw[edge] (M) to[out=310,in=50] (B);
  \draw[edge] (B)--(0,-2.65);
  \node[halfedgelabel] at (1.1,-0.26) {$a_2$};
  \node[halfedgelabel] at (0,-2.84) {$a_1$};
  \node[edgelabel] at (0.58,-0.7) {$3-a_2$};
\end{tikzpicture}
\sgcaption{k}{\Gamma_{10,11}}
\end{subfigure}\hfill
\begin{subfigure}[t]{0.19\textwidth}\centering
\begin{tikzpicture}[vgraph]
  \vgbbox
  \node[component] (R) at (0.55,0) {2};
  \node[component] (L) at (-0.55,-1.05) {1};
  \node[opennode] (M) at (0,-2.10) {};
  \draw[edge] (R)--(M);
  \draw[edge] (L)--(M);
  \draw[edge] (M)--(-0.62,-2.65);
  \draw[edge] (M)--(0.62,-2.65);
  \node[edgelabel] at (0.4,-1.5) {$3$};
  \node[halfedgelabel] at (-0.72,-2.83) {$a_1$};
  \node[halfedgelabel] at (0.72,-2.83) {$a_2$};
\end{tikzpicture}
\sgcaption{l}{\Gamma_{5,12}}
\end{subfigure}\hfill
\begin{subfigure}[t]{0.19\textwidth}\centering
\begin{tikzpicture}[vgraph]
  \vgbbox
  \node[component] (L) at (-0.55,0) {1};
  \node[component] (R) at (0.55,-1.05) {2};
  \node[opennode] (M) at (0,-2.10) {};
  \draw[edge] (L)--(M);
  \draw[edge] (R)--(M);
  \draw[edge] (M)--(-0.62,-2.65);
  \draw[edge] (M)--(0.62,-2.65);
  \node[edgelabel] at (0.5,-1.67) {$3$};
  \node[halfedgelabel] at (-0.72,-2.83) {$a_1$};
  \node[halfedgelabel] at (0.72,-2.83) {$a_2$};
\end{tikzpicture}
\sgcaption{m}{\Gamma_{6,12}}
\end{subfigure}\hfill
\begin{subfigure}[t]{0.19\textwidth}\centering
\begin{tikzpicture}[vgraph]
  \vgbbox
  \node[component] (T) at (0.35,0) {1};
  \node[component] (L) at (-0.55,-1.05) {1};
  \node[opennode] (U) at (0.25,-1.05) {};
  \node[opennode] (M) at (0,-2.10) {};
  \draw[edge] (T.south west) to[out=225,in=135] (U);
  \draw[edge] (T.south east) to[out=315,in=45] (U);
  \draw[edge] (L)--(M);
  \draw[edge] (U)--(M);
  \draw[edge] (M)--(-0.62,-2.65);
  \draw[edge] (M)--(0.62,-2.65);
  \node[edgelabel] at (0.4,-1.67) {$3$};
  \node[halfedgelabel] at (-0.72,-2.83) {$a_1$};
  \node[halfedgelabel] at (0.72,-2.83) {$a_2$};
\end{tikzpicture}
\sgcaption{n}{\Gamma_{3,12}}
\end{subfigure}\hfill
\begin{subfigure}[t]{0.19\textwidth}\centering
\begin{tikzpicture}[vgraph]
  \vgbbox
  \node[component] (L) at (-0.55,0) {1};
  \node[component] (R) at (0.55,0) {1};
  \node[component] (C) at (0.35,-1.05) {1};
  \node[opennode] (M) at (0,-2.10) {};
  \draw[edge] (L)--(M);
  \draw[edge] (R)--(C);
  \draw[edge] (C)--(M);
  \draw[edge] (M)--(-0.62,-2.65);
  \draw[edge] (M)--(0.62,-2.65);
  \node[edgelabel] at (0.4,-1.67) {$3$};
  \node[halfedgelabel] at (-0.72,-2.83) {$a_1$};
  \node[halfedgelabel] at (0.72,-2.83) {$a_2$};
\end{tikzpicture}
\sgcaption{o}{\Gamma_{7,12}}
\end{subfigure}

\par\vspace{0.8em}

\begin{subfigure}[t]{0.19\textwidth}\centering
\begin{tikzpicture}[vgraph]
  \vgbbox
  \node[component] (L) at (-0.55,0) {1};
  \node[component] (R) at (0.55,0) {1};
  \node[opennode] (U) at (0.25,-1.05) {};
  \node[opennode] (M) at (0,-2.10) {};
  \draw[edge] (L)--(M);
  \draw[edge] (R.south west) to[out=225,in=135] (U);
  \draw[edge] (R.south east) to[out=315,in=45] (U);
  \draw[edge] (U)--(M);
  \draw[edge] (M)--(-0.62,-2.65);
  \draw[edge] (M)--(0.62,-2.65);
  \node[edgelabel] at (0.4,-1.67) {$3$};
  \node[halfedgelabel] at (-0.72,-2.83) {$a_1$};
  \node[halfedgelabel] at (0.72,-2.83) {$a_2$};
\end{tikzpicture}
\sgcaption{p}{\Gamma_{8,12}}
\end{subfigure}

\caption{Additional 3-level graphs in
$\overline{\mathcal H}(a_1,a_2)$, where
$(a_1,a_2)=(3,1)$ or $(2,2)$.}
\label{fig:h13}

\label{fig:h13}
\end{figure}

% ------------------------------------------------------------------
% H(2,2)
% Visible labels: (Q), (R)
% ------------------------------------------------------------------
\begin{figure}[H]
\centering

\begin{subfigure}[t]{0.30\textwidth}
\centering
\begin{tikzpicture}[vgraph]
  \path[use as bounding box] (-1.20,0.42) rectangle (1.20,-2.94);
  \node[component] (A) at (-0.75,0) {1};
  \node[component] (B) at (0.10,0) {1};
  \node[component] (C) at (0.72,-1.05) {1};
  \node[opennode] (M) at (0,-2.10) {};
  \draw[edge] (A)--(M);
  \draw[edge] (B)--(M);
  \draw[edge] (C)--(M);
  \draw[edge] (M)--(-0.55,-2.65);
  \draw[edge] (M)--(0.55,-2.65);
  \node[halfedgelabel] at (-0.68,-2.52) {$2$};
  \node[halfedgelabel] at (0.68,-2.52) {$2$};
\end{tikzpicture}
\sgcaption{Q}{\Gamma_{7,13}}
\end{subfigure}
\hfill
\begin{subfigure}[t]{0.30\textwidth}
\centering
\begin{tikzpicture}[vgraph]
  \path[use as bounding box] (-1.20,0.42) rectangle (1.20,-2.94);
  \node[component] (A) at (-0.75,0) {1};
  \node[component] (B) at (-0.10,-1.05) {1};
  \node[component] (C) at (0.72,-1.05) {1};
  \node[opennode] (M) at (0,-2.10) {};
  \draw[edge] (A.south west) to[out=235,in=135] (M);
  \draw[edge] (B)--(M);
  \draw[edge] (C)--(M);
  \draw[edge] (M)--(-0.55,-2.65);
  \draw[edge] (M)--(0.55,-2.65);
  \node[halfedgelabel] at (-0.68,-2.52) {$2$};
  \node[halfedgelabel] at (0.68,-2.52) {$2$};
\end{tikzpicture}
\sgcaption{R}{\Gamma_{6,13}}
\end{subfigure}

\caption{Additional 3-level graphs in $\overline{\mathcal H}(2,2)$.}
\label{fig:h22}
\end{figure}

\subsection{Boundary of $\cH(4)$}

If $\mu=(4)$, then $\partial \overline{\cH}(\mu)$ has codimension-two boundary strata corresponding to level graphs depicted in Figure~\ref{fig:h4}. The middle-level components are contained in the following generalized strata, up to forgetting the labeling of some zeroes and poles:
\begin{center}
\renewcommand{\arraystretch}{1.25}
\begin{tabular}{@{}r@{\;}l@{\qquad\qquad}r@{\;}l@{}}
\sgtag{A} & $\cH(2,0,-2,-2)$
    & \sgtag{I} & $\cH(0,0,-2)$ \\

\sgtag{B} & $\cH(2,-2,-2)$
    & \sgtag{J} & $\cH(2,-2,-2)$ \\

\sgtag{C} & $\cH(2,0,-4)$
    & \sgtag{K} & $\cH(0,0,-2)$ \\

\sgtag{D} & $\cH(2,0,-2)$
    & \sgtag{L} & $\cH(0)$ \\

\sgtag{E} & $\cH(1,1,-2,-2)$
    & \sgtag{M} & $\cH(0,0,0,-2)$ \\

\sgtag{F} & $\cH(1,1,-4)$
    & \sgtag{N} & $\cH(2,-2)$ \\

\sgtag{G} & $\cH(1,1,-2)$
    & \sgtag{O} & $\cH(0,0)$ \\

\sgtag{H} & $\cR(1,1,-2,-2)$
    & \sgtag{P} & $\cH(0)$
\end{tabular}
\end{center}

\begin{itemize}
    \item The strata in \sgtag{A}, \sgtag{W}, \sgtag{M} are isomorphic to $\mathcal{M}_{0,4}\cong \mathbb{C}\setminus\{0,1\}$.
    \item The strata in \sgtag{C}, \sgtag{B}, \sgtag{D}, \sgtag{I}, \sgtag{K}, \sgtag{F}, \sgtag{J} are isomorphic to $\mathcal{M}_{0,3}=\{\ast\}$.
    \item The strata in \sgtag{G}, \sgtag{L}, \sgtag{P} are $\mathcal{M}_{1,1}$.
    \item The stratum in \sgtag{O} is $\mathcal{M}_{1,2}$.
    \item The stratum in \sgtag{H} is a singleton $\{\ast\}$. 
    \item The stratum in \sgtag{N} is isomorphic to a rational curve. 
\end{itemize}

\subsection{Boundary of $\cH(3,1)$}

If $\mu=(3,1)$, then $\partial \overline{\cR}(\mu)$ has boundary divisors corresponding to level graphs depicted in Figure~\ref{fig:h4} with the zero of order 4 is replaced by two zeroes of orders $3$ and $1$, respectively. The middle level strata are unchanged. 

Additionally, it has boundary divisors corresponding to the level graphs depicted in Figure~\ref{fig:h13} with $(a_1,a_2)=(3,1)$. The middle-level components are contained in the following generalized strata, up to forgetting the labeling of some zeroes and poles:

\begin{center}
\renewcommand{\arraystretch}{1.25}
\begin{tabular}{@{}r@{\;}l@{\qquad\qquad}r@{\;}l@{}}
\sgtag{a} & $\cH(1,1,-4)$
    & \sgtag{i} & $\cH(1,1,0,-2)$ \\

\sgtag{b} & $\cH(1,1,-2)$
    & \sgtag{j} & $\cR(1,1,0,-2,-2)$ \\

\sgtag{c} & $\cH(1,1,-2,-2)$
    & \sgtag{k} & $\cH(1,0,-3)$ \\

\sgtag{d} & $\cH(1,1,-4)$
    & \sgtag{l} & $\cH(0)$ \\

\sgtag{e} & $\cH(1,0,-3)$
    & \sgtag{m} & $\cH(2)$ \\

\sgtag{f} & $\cH(1,1,0,-2,-2)$
    & \sgtag{n} & $\cH(0)\times\cH(2,-2,-2)\times\mathbb{C}^{\ast}$ \\

\sgtag{g} & $\cH(1,1,-2,-2)$
    & \sgtag{o} & $\cH(2,-2)$ \\

\sgtag{h} & $\cH(1,1,0,-4)$
    & \sgtag{p} & $\cH(2,-2,-2)$
\end{tabular}
\end{center}

\begin{itemize}
    \item The strata in \sgtag{g}, \sgtag{c}, \sgtag{h} are isomorphic to $\mathcal{M}_{0,4}$.
    \item The strata in \sgtag{a}, \sgtag{e}, \sgtag{d}, \sgtag{k}, \sgtag{p} are isomorphic to $\mathcal{M}_{0,3}=\{\ast\}$.
    \item The stratum in \sgtag{f} is isomorphic to $\mathcal{M}_{0,5}$
    \item The stratum in \sgtag{j} is isomorphic to $\mathbb{P}^1\setminus \{\text{4 pts}\}$. The stratum $\cR(1,1,-2 ,-2)$ is a singleton, so $\cR(1,1,0,-2,-2)$ is rational as it is the set of marked point on the unique element of $\cR(1,1,-2,-2)$.   
    \item The stratum in \sgtag{b} is an open subvariety of $\cM_{1,2}$
    \item The stratum in \sgtag{i} is an open subvariety of $\cM_{1,3}$
    \item The strata in \sgtag{l},\sgtag{m},\sgtag{n} has CKgP.
    \item The stratum in \sgtag{o} is a rational curve. 
\end{itemize}

\subsection{Boundary of $\cH(2,2)$}

If $\mu=(2,2)$, then $\partial \overline{\cH}(\mu)$ has boundary divisors corresponding to level graphs depicted in Figure~\ref{fig:h4} with the zero of order 4 is replaced by two double zeroes. The middle level strata are unchanged. 

Additionally, it has (possibly empty) boundary divisors corresponding to the level graphs depicted in Figure~\ref{fig:h13} with $(a_1,a_2)=(2,2)$. The middle-level components are contained in the following generalized strata, up to forgetting the labeling of some zeroes and poles:

\begin{center}
\renewcommand{\arraystretch}{1.25}
\begin{tabular}{@{}r@{\;}l@{\qquad\qquad}r@{\;}l@{}}
\sgtag{a} & $\cH(2,0,-4)$
    & \sgtag{i} & $\cH(2,0,0,-2)$ \\

\sgtag{b} & $\cH(2,0,-2)$
    & \sgtag{j} & \textit{Empty} \\

\sgtag{c} & $\cH(2,0,-2,-2)$
    & \sgtag{k} & $\cH(0,0,-2)$ \\

\sgtag{d} & $\cH(2,0,-4)$
    & \sgtag{l} & $\cH(0)$ \\

\sgtag{e} & \textit{Empty}
    & \sgtag{m} & $\cH(2)$ \\

\sgtag{f} & $\cH(2,0,0,-2,-2)$
    & \sgtag{n} &
      $\cH(0)\times\cH(2,-2,-2)\times\mathbb{C}^{\ast}$ \\

\sgtag{g} & $\cH(2,0,-2,-2)$
    & \sgtag{o} & $\cH(2,-2)$ \\

\sgtag{h} & $\cH(2,0,0,-4)$
    & \sgtag{p} & $\cH(2,-2,-2)$
\end{tabular}
\end{center}

\begin{itemize}
    \item The strata in \sgtag{c}, \sgtag{g}, \sgtag{h} are isomorphic to $\mathcal{M}_{0,4}$.
    \item The strata in \sgtag{a}, \sgtag{d}, \sgtag{k}, \sgtag{p} are isomorphic to $\mathcal{M}_{0,3}=\{\ast\}$.
    \item \sgtag{f} is isomorphic to $\mathcal{M}_{0,5}$
    \item The stratum in \sgtag{b} is a rational surface. 
    \item The stratum in \sgtag{i} has CKgP by Proposition~\ref{prop:h200-2}.
    \item The stratum in \sgtag{l} is isomorphic to $\mathcal{M}_{1,1}$.
    \item The stratum in \sgtag{m} is isomorphic to a rational curve.
    \item The strata in \sgtag{n} is a $\mathbb{G}_m$-bundle over $\cH(0)\times\cH(2,-2,-2)$, so it has CKgP.
    \item The stratum in \sgtag{o} is isomorphic to a rational curve. 
\end{itemize}

Finally, $\overline{\cH}(2,2)$ has two more additional divisors, corresponding to the level graphs depicted in Figure~\ref{fig:h22}. The middle-level components are contained in the following generalized strata, up to forgetting the labeling of some zeroes and poles:

\begin{center}
\renewcommand{\arraystretch}{1.25}
\begin{tabular}{@{}r@{\;}l@{\qquad\qquad}r@{\;}l@{}}
\sgtag{Q} & $\cH(0)$
    & \sgtag{R} & $\cH(0) \times \cH(0)\times \mathbb{C}^\ast$ \\

\end{tabular}
\end{center}

\begin{itemize}
    \item The strata in \sgtag{Q} is isomorphic to a rational curve.
    \item The strata in \sgtag{R} is a $\mathbb{G}_m$-bundle over $\cH(0)\times\cH(0)$, thus has the CKgP. 
\end{itemize}

\bibliography{bib}
\bibliographystyle{siam}

\end{document}